\documentclass[12pt]{amsart}

\usepackage[left=3.5cm, right=3.5cm, top=3cm]{geometry}
\usepackage{color,amssymb,graphicx, amsmath, amsthm, enumitem,MnSymbol, array,mathalpha,tikz-cd}
\usepackage{tikz}

\DeclareMathAlphabet\mathbfcal{OMS}{cmsy}{b}{n}

\usepackage{floatflt}

\usepackage{hyperref}

\hypersetup{
  pdftitle={The Nash manifold of four-point configurations modulo similarity subgroups},
  pdfauthor={Bruce Olberding and Elaine A. Walker},
  pdfkeywords={Moduli space, point configuration, quadrilateral, quadrangle, nine-point conic}
}

\usepackage{mathbbol}

 \usepackage[mathscr]{eucal}

\usepackage[font=small]{caption}
\newcommand{\R}{\mathbb{R}}
\newcommand{\Q}{{\mathcal Q}}
\newcommand{\Quad}{{\mathcal Q}}
\renewcommand{\P}{{\mathbb{RP}}}

\DeclareMathOperator{\sgn}{sgn}
\DeclareMathOperator{\sph}{\mathrm{sph}}
\DeclareMathOperator{\tr}{\mathsf{tr}}
\begin{document}

\newtheorem{theorem}{Theorem}[section]
\newtheorem{lemma}[theorem]{Lemma}
\newtheorem{proposition}[theorem]{Proposition}
\newtheorem{corollary}[theorem]{Corollary}
\newtheorem{problem}[theorem]{Problem}
\newtheorem{construction}[theorem]{Construction}

\theoremstyle{definition}
\newtheorem{defi}[theorem]{Definitions}
\newtheorem{definition}[theorem]{Definition}
\newtheorem{notation}[theorem]{Notation}
\newtheorem{remark}[theorem]{Remark}
\newtheorem{example}[theorem]{Example}
\newtheorem{question}[theorem]{Question}
\newtheorem{comment}[theorem]{Comment}
\newtheorem{comments}[theorem]{Comments}

\newtheorem{discussion}[theorem]{Discussion}

\renewcommand{\thedefi}{}

\parskip=.5mm

\title[The Nash manifold of four-point configurations]{The Nash manifold of four-point configurations modulo similarity subgroups}

\author{Bruce Olberding}
\address{Department of Mathematical Sciences, New Mexico State University\newline Las Cruces, NM 88003-8001}

\email{bruce@nmsu.edu}

\author{Elaine A.~Walker}
\address{Las Cruces, NM}

\email{elaine@ziaspace.com}

\begin{abstract}  We construct and study the Nash manifold of four-point configurations in the real plane modulo the action of a Nash subgroup of the group of similarity transformations. We do so by using the finer notion of a quadrangle in place of that of a four-point configuration, since this retains limiting line data in degenerations. We prove that the space $\Q$ of quadrangles is an 8-dimensional Nash manifold and that, for every Nash subgroup $G$ of the similarity group, the orbit space $\Q/G$ is a Nash 
manifold. We  define a geometric invariant, called aspect, with values in $[-1,1]$, and prove that, over each of the intervals $(-1,0)$ and $(0,1)$, the corresponding part of $\Q/G$ is Nash diffeomorphic to the product of the interval with a fixed fiber. Thus the nonexceptional part of the moduli problem reduces to the analysis of two model fibers, each having a natural geometric interpretation in terms of the quadrangles themselves.

 \end{abstract}

\subjclass[2020]{Primary 14P20, 51N10; Secondary 51N25, 58A07}

\keywords{Moduli space, point configuration, quadrilateral, quadrangle, nine-point conic}

\date{\today}

\maketitle

\section{Introduction}

In this article we  construct a Nash moduli space for four-point configurations in the real plane by replacing configurations of points with quadrangles, which are four-point configurations that keep their line data in degenerations, and we use the nine-point conic of a quadrangle to define an invariant that organizes the resulting moduli space. Closely related moduli spaces of quadrilaterals have been studied extensively; see, for example,  
\cite{GN,KapMil1,KapMil2, ShiVan,StClair}, where the structure of moduli spaces of fixed-side-length quadrilaterals is described, \cite{GonLop1,GonLop2, GonMen}, where moduli spaces of simple quadrilaterals are examined, or  \cite[Section~4.2]{ArrMuc}, where generic unordered four-point configurations are considered modulo the full affine group. A basic issue in dealing with such  moduli spaces is how to handle degenerations. For example,    {Figure~1 illustrates how convergence of four vertices alone need not control the limiting slopes of the joining lines, or numerical invariants that depend on them.}
     \begin{figure}[ht]
     \begin{center}
\includegraphics[width=.82\textwidth,scale=.03]{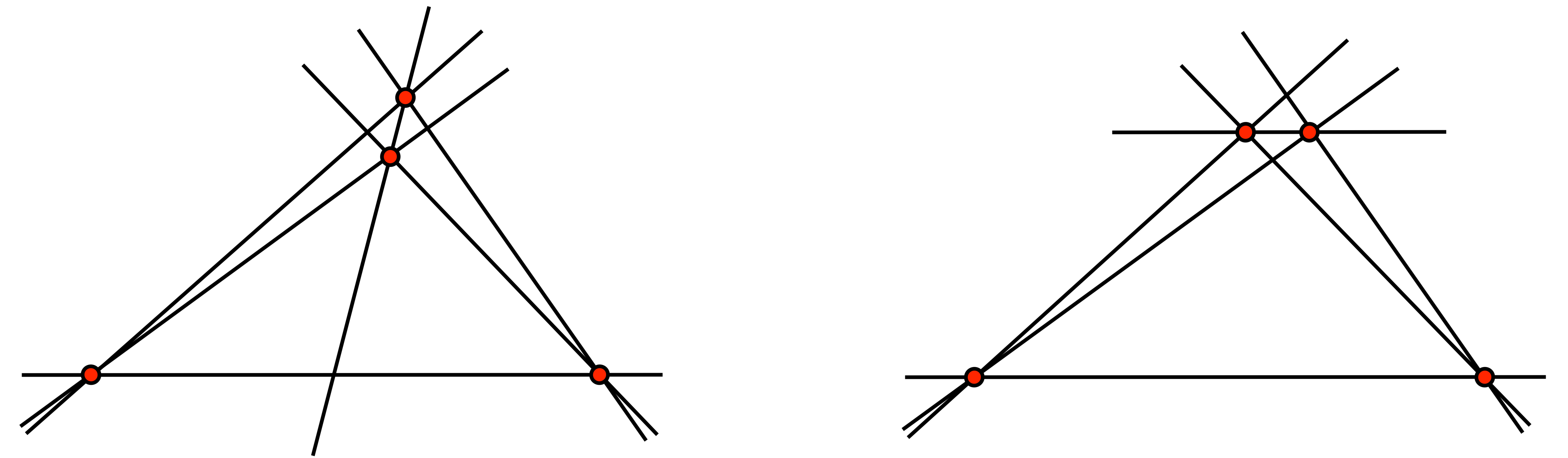}
 \end{center}
\caption{Ignoring the lines, these two configurations of points can be made arbitrarily close, after translating one of them, by deforming them so that the top two vertices in each approach one another. However, the slopes of the lines joining the top pair in the two configurations do not approach one another, so when the lines are included in the data of the configuration, these configurations remain far apart. }
\end{figure}

To retain this data that is otherwise lost in degenerations, we enrich the notion of a four-point configuration and consider the associated {\it quadrangle}, a configuration consisting of four vertices together with  six lines (``sides'') joining each pair of vertices. {We allow degenerations but require at least three sides to extend those of a nondegenerate triangle.} While this introduction of quadrangles into the picture adds no new information in the {nondegenerate} case, it remembers line data in degenerations that would otherwise be lost, and it leads to a finer notion that can be viewed as preventing some ``wormholes'' in the moduli space.

Unlike the usual spaces of quadrilateral shapes, which encode only the vertices modulo a chosen transformation group, our 
construction also retains line data in degenerations. In this respect, it is reminiscent of Fulton--MacPherson-type compactifications of configuration spaces \cite{FM}, which retain information about nested coincidence of points, relative scales, and limiting directions. By contrast, our space of quadrangles retains the four vertices together with the six pairwise projective line directions, subject to the requirement that at least three of the corresponding lines extend the sides of a nondegenerate triangle. This restricted treatment of degenerations is tailored to four-point geometry and to quotients by similarity subgroups.

The space $\Q$ of quadrangles has a well-behaved geometric structure, but this is not for free. The proof requires the {nondegenerate} triangular core in the definition of quadrangle, as does the properness of the similarity actions considered in the article. The first main result is then that the quadrangle construction {defines} a genuine Nash moduli space rather than merely a set of equivalence classes. We prove that $\Q$ is an 8-dimensional Nash manifold and that, for every Nash subgroup $G\subseteq \operatorname{Sim}(\R^2)$ (such as the oriented or unoriented isometry and similarity groups), the quotient $\Q/G$ is a Nash manifold of dimension $8-\dim G$, with quotient map a Nash submersion and a principal $G$-bundle.

This result provides the setting for the rest of the paper. We use {a classical construction from nineteenth-century geometry}, the nine-point conic of a quadrangle, to put the moduli space into a more tractable form. Using the shape data of the nine-point conic, we define what we call a form map from $\Q$ to $\P^2$ and prove it  is a Nash submersion. Using the form map, we define sphericity and aspect functions on $\Q$. Sphericity is Nash on all of $\Q$, whereas aspect is Nash on the open subset consisting of quadrangles with aspect in $(-1,1)$, but its restriction to this subset has no Nash extension to all of $\Q$. Nevertheless, its level sets {are} Nash submanifolds. In particular, for every aspect $\alpha\in[-1,1]$, {the orbit space of aspect $\alpha$ quadrangles} is a closed embedded Nash submanifold of $\Q/G$, of codimension $1$ except at aspect $1$, where the codimension is $2$. When the aspect is not $-1$, $0$ or $1$, aspect becomes the organizing parameter for the quotient. If $\alpha,\beta\in(-1,0)\cup(0,1)$ have the same sign, then the aspect $\alpha$ and aspect $\beta$ level sets in $\Q$ are $G$-equivariantly Nash diffeomorphic. This is another main result of the paper, and it is contained in Theorem~\ref{hard}, which constructs a global, similarity-equivariant Nash ``boost'' between the aspect levels. Consequently, the elliptic and hyperbolic 
nonexceptional pieces of $\Q/G$ are Nash-trivial over intervals, with typical fibers the aspect $1/2$ and aspect $-1/2$ orbit spaces, respectively.

 \vspace{.1in}

{\it Notation.} We denote by $\R$ the set of real numbers, and  by $\P^1$ and $\P^2$ the real projective line and  real projective plane, respectively.

\section{Quadrangles}

A  (complete) quadrangle as classically defined is a planar configuration of four points in general position and the six lines through pairs of the four points. We loosen this definition in order to permit degenerations. Permitting all possible degenerations results is what we will call a vertex configuration.\footnote{{ In Martin's terminology \cite[Definition~3.1]{Martin}, our vertex configurations are real pictures of the complete graph $K_4$, with all vertices in the affine plane.}}

 \begin{definition} \label{quadrangle def}

A {\it (labeled) vertex configuration} $Q$ consists of four points (called {\it vertices}) given in order $v_1,\ldots,v_4$ and six lines ({\it sides}), ${L_{12},L_{13},L_{14},L_{23},L_{24},L_{34}}$. 
For $1 \leq i < j \leq 4$, the vertices $v_i$ and $v_j$ lie on ${L_{ij}}$. A point at which exactly two vertices coincide is called a \emph{double vertex}. Two sides ${L_{ij}}$ and ${L_{i'j'}}$ are {\it opposite} each other if $\{i,j\} \cap \{i',j'\} = \varnothing$; otherwise, the two sides are {\it adjacent}. The vertex configuration is {\it degenerate} if some three vertices (with coincidences allowed) are collinear. If at least three of the sides of~$Q$ extend the sides of a {nondegenerate} triangle, then the vertex configuration $Q$ is a {\it quadrangle}.  \end{definition}

A quadrangle is a vertex configuration, but the converse need not be true since a vertex configuration could  for example  consist of four coincident points and six lines through them, which is not a quadrangle according to our definition. A version of the following proposition also appears in \cite[Proposition~2.2]{OWEnt} in a broader algebraic treatment of quadrangles. We include a  proof here to keep the present paper   self-contained. The three cases of the proposition are illustrated in Figure~\ref{degenerate quadrangles figure}.

      \begin{figure}[ht]
     \begin{center}
\includegraphics[width=.95\textwidth,scale=.03]{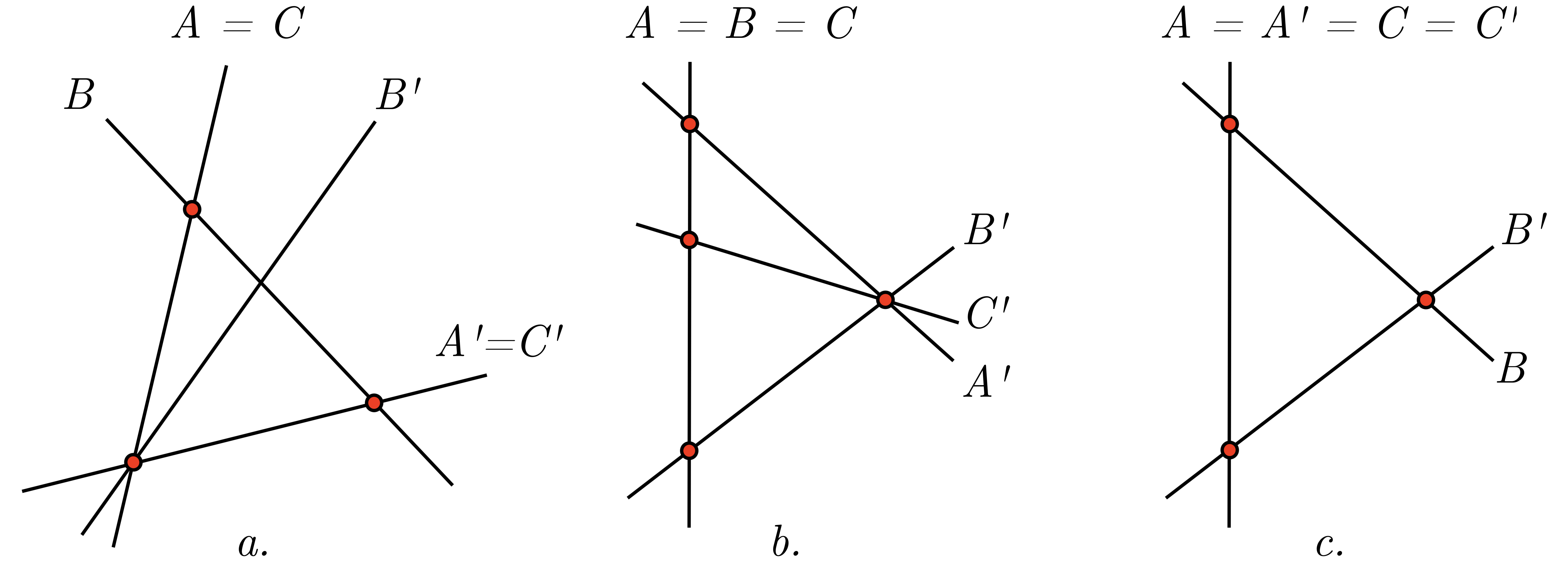}
 \end{center}
\caption{The three types of degenerate quadrangles. (a) A double-side quadrangle: two pairs, when taken as sets,  are equal, and there is one double vertex. (b) A triple-side quadrangle: three {pairwise} adjacent sides coincide in a triple side. (c) A quadruple-side quadrangle: two pairs of opposite sides collapse to a single shared side, while the third pair consists of two distinct {nonparallel} sides. }
  \label{degenerate quadrangles figure}
\end{figure}
\begin{proposition}\label{degenerate classification}
Each degenerate quadrangle satisfies  exactly one of the following statements.
\begin{enumerate}
\item[\rm (a)]   {Two} pairs of opposite sides have the same underlying two-element set of lines, and neither of those lines belongs to the third pair.
\item[\rm (b)] Three  pairwise adjacent sides coincide.
\item[\rm (c)] All four sides occurring in two pairs of opposite sides coincide in one line, while the third pair consists of two distinct nonparallel lines different from that line.
\end{enumerate}
\end{proposition}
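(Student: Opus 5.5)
The argument starts from the nondegenerate triangular core. By definition, some three sides of $Q$ extend the sides of a nondegenerate triangle. Three lines bounding a triangle must be pairwise adjacent in the index sense. If they were not, two of them would be opposite, say $L_{12}$ and $L_{34}$, and the third, say $L_{13}$, would meet both. A triangle needs three distinct, pairwise nonparallel lines, and that configuration alone does not rule the case out. So I first analyze which index triples can support a triangle. A triple of pairwise adjacent pairs is either a ``star'' $\{L_{i j},L_{i k},L_{i l}\}$, whose lines all pass through $v_i$ and so cannot form a triangle, or a ``triangle'' $\{L_{ij},L_{jk},L_{ik}\}$. A triangle triple has vertices $v_i,v_j,v_k$ at the pairwise intersections, so these three vertices are distinct and noncollinear. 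The remaining triples contain an opposite pair. For such a triple, e.g.\ $L_{12},L_{34},L_{13}$ with $L_{12}\cap L_{13}\ni v_1$ and $L_{13}\cap L_{34}\ni v_3$, I obtain a triangle with two vertices $v_1\neq v_3$ on $L_{13}$. In every case, then, the core gives at least two distinct vertices and, after relabeling, a triangle formed by sides.

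Next comes the degeneracy. Some three vertices are collinear, with coincidences allowed. I split according to the coincidence pattern of the four points: all distinct but with three collinear, exactly one double vertex, two double vertices, a triple vertex, and so on. The core excludes four coincident points, and it should also exclude most other patterns. In each surviving pattern I determine which sides are forced. If $v_i\neq v_j$, then $L_{ij}$ is the line $v_iv_j$. If $v_i=v_j$, then $L_{ij}$ is free through that point. Three distinct collinear vertices force their three pairwise sides to coincide, which gives (b). A single double vertex $v_1=v_2$ with $v_3,v_4$ distinct forces $L_{13}=L_{23}$ and $L_{14}=L_{24}$. These are two pairs of opposite sides, $\{L_{13},L_{24}\}$ and $\{L_{14},L_{23}\}$, with the same underlying set $\{v_1v_3,v_1v_4\}$, which gives (a) unless further collinearity occurs. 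Two double vertices $v_1=v_2$, $v_3=v_4$ force all four sides $L_{13},L_{14},L_{23},L_{24}$ to equal the line $v_1v_3$. The core then requires $L_{12}$ and $L_{34}$ to be distinct, nonparallel, and different from that line, which gives (c). In each case I then check that the triangle condition forces exactly the stated extra conditions: for (a), that neither line belongs to the third pair; for (c), nonparallelism.

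Finally comes exclusivity. In (a) the four sides involved span exactly two distinct lines, and neither of them is in the third pair. In (c) those four sides span one line. In (b) three pairwise adjacent sides coincide. To separate (a) and (c) from (b), I show that under (a) or (c) no three pairwise adjacent sides coincide, using the distinctness conditions, and conversely that (b) excludes the configurations of (a) and (c).

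The main obstacle is the case analysis with coincident vertices. There the sides through a multiple vertex are unconstrained, and one must use the triangle core carefully to rule out mixed configurations, for example a double vertex collinear with a third vertex, or a triple vertex. I would handle these by showing that each such pattern either leaves no three sides forming a nondegenerate triangle or reduces to (b).
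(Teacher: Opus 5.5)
Your route is essentially the paper's: sort by the coincidence pattern of the vertices, use $L_{ij}=v_iv_j$ when $v_i\neq v_j$, let the triangular core eliminate bad patterns, and separate the cases by counting coincident sides. However, one step, as you state it, would fail.

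In the single-double-vertex case $v_1=v_2=p$ with $p,v_3,v_4$ noncollinear, you plan to check that the triangle condition forces neither of $pv_3,pv_4$ to lie in the third pair $\{L_{12},L_{34}\}$. This holds for $L_{34}=v_3v_4$. It is false for $L_{12}$, which is only constrained to pass through $p$. Take $v_1=v_2=(0,0)$, $v_3=(1,0)$, $v_4=(0,1)$, and let $L_{12}$ be the $x$-axis. The sides $L_{13},L_{14},L_{34}$ bound a nondegenerate triangle, so this is a quadrangle. But $L_{12}=L_{13}=L_{23}$, so it is of type (b), not (a).

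So this case has to split, exactly as in the paper: if $L_{12}\in\{pv_3,pv_4\}$ you get (b), and otherwise (a). Your phrases ``unless further collinearity occurs'' and ``a double vertex collinear with a third vertex'' do not pin this down. A double vertex together with any third vertex is always a collinear triple. The relevant condition is whether the free side $L_{12}$ passes through $v_3$ or $v_4$.

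Two other points should be tightened. First, the patterns you leave to ``and so on'' each take one line:
\begin{itemize}
\item a triple or quadruple coincidence makes all six sides concurrent;
\item four distinct collinear vertices make all six sides equal;
\item four collinear vertices with exactly one double vertex leave at most two distinct lines, namely $L_{12}$ and the common line.
\end{itemize}
If the vertices are not all collinear, there is at most one double vertex. So only the two patterns you treat survive: four distinct vertices with three collinear, and the case above.

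Second, for exclusivity you separate (a) and (c) from (b), but not (a) from (c). Comparing ``the four sides involved'' is not enough, since the two statements may refer to different pairs of opposite sides. Counting side multiplicities fixes this:
\begin{itemize}
\item under (a), no line occurs more than twice;
\item under (c), one line occurs four times and the other two once each.
\end{itemize}
Moreover, any three of those four coincident sides in (c) contain an opposite pair, so (c) also excludes (b).

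Your preliminary analysis of which index triples can bound a triangle is not needed.
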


\begin{proof}
 If all four vertices are collinear, the definition of a quadrangle   excludes four distinct vertices, three or more coincident vertices, and one double vertex with two other distinct vertices, since the sides then, respectively, coincide, are concurrent, or have at most two directions. Thus, after relabeling, $v_1=v_2=p$ and $v_3=v_4=q$. Then $L_{13}=L_{14}=L_{23}=L_{24}$
  is the line through $p$ and $q$. By the definition of a quadrangle, the remaining sides $L_{12}$ and $L_{34}$ are nonparallel and different from that line, and so case~(c) applies.

 If the vertices are not all collinear, degeneracy leaves either one double vertex or four distinct vertices with three collinear. With a double vertex, we can relabel so that $v_1=v_2=p$ and $p,v_3,v_4$ are not collinear. Then $L_{13}=L_{23}$ and $L_{14}=L_{24}$, so two pairs of opposite sides have the same underlying set. Moreover, $L_{34}$ lies outside this set and $L_{12}\ne L_{34}$. If $L_{12}$ belongs to the common set, three pairwise adjacent sides coincide, {as in case~(b)}; otherwise case~(a) holds. In the second case, {we may relabel so that} $L_{12}=L_{13}=L_{23}$, which puts us in case~(b). The greatest side multiplicities in cases~(a), (b), and~(c) are $2,3,4$, respectively, so the cases are mutually exclusive.
\end{proof}

For a line occurring among the sides of a quadrangle, its \emph{side multiplicity} is the number of its occurrences among the six sides. We say a side of multiplicity two, three, or four   a \emph{double side}, \emph{triple side}, or \emph{quadruple side}, respectively. A degenerate quadrangle is called \emph{double-side}, \emph{triple-side}, or \emph{quadruple-side} according to whether its greatest side multiplicity is two, three, or four. Thus cases~(a), (b), and~(c) of Proposition~\ref{degenerate classification} are double-side, triple-side, and quadruple-side, respectively. In a triple-side quadrangle, three {pairwise} adjacent sides coincide in its triple side; see Figure~\ref{degenerate quadrangles figure}(b).

\section{The Nash manifold of quadrangles}

 For a vertex configuration $Q$, let $v_1,\ldots,v_4\in\R^2$ be its four vertices, and write $v_i=(x_i,y_i)$. Write  its normals as $[t_{ij}:u_{ij}]$. Recalling that our vertex configurations are labeled, we can identify these configurations with the points $Q=(v_i;[t_{ij}:u_{ij}])$ in $(\R^2)^4\times(\P^1)^6$ satisfying
$$t_{ij}(x_i-x_j)+u_{ij}(y_i-y_j)=0 \quad\text{for all }1\leq i<j\leq4.$$
For each $i<j$, the corresponding side is. $$L_{ij}:t_{ij}(X-x_i)+u_{ij}(Y-y_i)=0.$$

For the rest of the article, we denote by $\Q$ the set of vertex configurations that are quadrangles. The set $\Q$ inherits a metric from the product metric on  $(\R^2)^4 \times (\P^1)^6$, where $\R^2$ has the Euclidean metric and  $\P^1$ has the standard round  metric, given for  points $[u],[v] \in {\P^1}$ by $$d_{\P^1}([u],[v])=\arccos\left(\frac{|\langle u,v\rangle|}{\|u\|\|v\|}\right).$$

The group ${\operatorname{Aff}(\R^2)}$ of affine transformations of the plane acts on $\Q$ by acting on each component of $(\R^2)^4 \times (\P^1)^6$ in the obvious way. Namely, if $g \in {\operatorname{Aff}(\R^2)}$ and $Q = (v_i;[t_{ij}:u_{ij}]) \in \Q,$ 
then $g \cdot v_i = g(v_i)$, and $g $ acts on each $ [t_{ij}:u_{ij}]$ by sending $[t_{ij}:u_{ij}]$ to the normal   of the image under $g$ of any line with normal $[t_{ij}:u_{ij}]$.  Equivalently, if ${S}$ is the linear part of $g$, the induced action on $\P^1$ sends $[t:u]$ to the projective class of ${S^{-T}}(t,u)^T$. It follows that the sides and vertices of $Q$ are sent by $g$ to the sides and vertices of $g \cdot Q$, so that the action on the points in $\Q$ corresponds to the action on the configuration in the plane that $Q$ represents.

We show in this section that the space of quadrangles  $\Q$ is a Nash manifold, and that the property of being a Nash manifold descends to $\Q/G$ if $G$ is a  {Nash subgroup} of the Nash group ${\operatorname{Sim}(\R^2)}$ of similarities of the plane. We recall first the relevant definitions.

A {\it Nash manifold of dimension $n$} is a real analytic manifold $M$ together with a finite atlas $\{(U_i,\varphi_i)\}$ such that each chart map $\varphi_i : U_i \to V_i$ is a homeomorphism onto an open semialgebraic subset $V_i\subseteq\mathbb{R}^n$, and all transition maps
 $$\varphi_j\circ\varphi_i^{-1} : \varphi_i(U_i\cap U_j) \longrightarrow \varphi_j(U_i\cap U_j)$$
are real analytic with semialgebraic graphs. A map $f : M \to N$ between Nash manifolds $M$ and $N$ is a {\it Nash map} if {it is real analytic and its graph is semialgebraic in $M\times N$, meaning that its intersection with each product chart from the finite Nash atlases has a semialgebraic coordinate image; see \cite[Lemma~2.11]{Sun}.}

A {\it Nash group} is a Lie group that is a Nash manifold and whose multiplication and inversion maps are Nash. A {\it Nash subgroup} of ${\operatorname{Aff}(\R^2)}$ is a semialgebraic subgroup of ${\operatorname{Aff}(\R^2)}$, where ${\operatorname{Aff}(\R^2)}$ is identified with ${\rm GL}_2(\R) \times \R^2$. By \cite[Proposition~3.2]{Sun}, every such subgroup is a closed Nash submanifold, and hence a Lie subgroup, of ${\operatorname{Aff}(\R^2)}$. The Nash subgroups include the algebraic subgroups of ${\operatorname{Aff}(\R^2)}$. In particular, they include the orientation-preserving and full isometry groups and the orientation-preserving and full similarity groups.

 The upshot of working in the Nash category rather than the category of smooth manifolds and smooth maps is that our methods are {\it almost} purely algebraic. As Artin and Mazur \cite[p.~85]{AM} put it, ``The Nash category is extremely useful in that it occupies a hybrid position: it is `dense' in differential topology, and it is amenable to algebraic geometric techniques.'' The main obstacle to pushing our approach all the way into the category of real varieties is that at several crucial steps we use the positive square-root function on $(0,\infty)$, which is Nash but not regular, and compositions of it with strictly positive Nash functions. Thus the Nash category allows for an approach that is largely algebraic while accommodating the functions needed in our constructions.

\begin{lemma} \label{Q is Nash}
The space  $\Q$ of quadrangles is an $8$-dimensional Nash manifold.
\end{lemma}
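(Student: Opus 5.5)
We will show that $\Q$ is an open semialgebraic subset of the real algebraic set
$$V=\{(v_i;[t_{ij}:u_{ij}])\in(\R^2)^4\times(\P^1)^6 : t_{ij}(x_i-x_j)+u_{ij}(y_i-y_j)=0\}$$
of vertex configurations, and that $V$ is nonsingular of dimension $8$ at every point of $\Q$. Since $(\P^1)^6$ is a Nash manifold (an affine algebraic variety via the standard embedding of $\P^1$ into symmetric $2\times2$ matrices), $V$ is a semialgebraic subset of a Nash manifold. A semialgebraic open subset of the smooth locus of an algebraic set inherits a Nash manifold structure: the implicit function theorem gives Nash charts, and finiteness of the atlas follows from the standard fact that Nash submanifolds of $\R^N$ admit finite Nash atlases.

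\emph{Openness.} The quadrangle condition says that there exist three sides $L_{ij},L_{jk},L_{ik}$ (one of the four index triples, indices in $\{1,2,3,4\}$) extending the sides of a nondegenerate triangle. I would phrase this as follows: the three lines are pairwise nonparallel and not concurrent. In coordinates, the $3\times3$ determinant of the line equations $t X+uY=t x_i+u y_i$ is nonzero, and the three $2\times 2$ normal minors are nonzero. These are polynomial nonvanishing conditions in affine charts of $\P^1$, so each triple gives an open semialgebraic set, and $\Q$ is their union over the four triples. Concurrency is exactly what excludes a degenerate triangle, because lines through a common vertex pair already meet at that vertex.

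\emph{Smoothness and dimension.} Work in local affine charts, say $u_{ij}=1$ or $t_{ij}=1$ with coordinate $s_{ij}$. Each of the six equations $f_{ij}=0$ involves only the single projective coordinate $s_{ij}$, with
$$\partial f_{ij}/\partial s_{ij}=\pm(x_i-x_j)\ \text{or}\ \pm(y_i-y_j),$$
and vertex derivatives $\nabla_{v_i}f_{ij}=(t_{ij},u_{ij})=-\nabla_{v_j}f_{ij}$. If $v_i\ne v_j$, the $s_{ij}$-derivative can be arranged nonzero by choosing the chart appropriately: use the chart in which the normal coordinate that multiplies the nonzero component of $v_i-v_j$ is the free one, noting that the normal is perpendicular to $v_i-v_j$. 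Such rows are independent of all others because $s_{ij}$ appears in no other equation.

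The main obstacle is the coincident pairs $v_i=v_j$. There the $s_{ij}$-derivative vanishes and I must use the vertex derivative $(t_{ij},u_{ij})\neq0$ instead. By the proof of Proposition~\ref{degenerate classification}, a quadrangle has at most two coincident pairs, and when there are two they are disjoint, $\{1,2\}$ and $\{3,4\}$ as in case (c). For these rows, take the derivative in the direction $v_i$ (moving one vertex of each pair). Since the pairs are disjoint, each such row uses its own vertex variable, so these rows are independent with independent columns. Those columns may also appear in rows for distinct pairs, but those rows are already made independent via their private $s$-columns, so the full $6\times14$ Jacobian has rank $6$ by a block-triangular argument. Hence $V$ is smooth of dimension $14-6=8$ near $\Q$. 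The nondegenerate-triangle hypothesis is what rules out three or more coincident vertices, where the rows would share vertex variables and rank could drop. Finally, I would check that $\Q$ is Hausdorff and second countable, which is automatic as a subspace of $(\R^2)^4\times(\P^1)^6$.
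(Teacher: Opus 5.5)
Your smoothness and dimension argument is essentially the paper's. In local affine charts, each equation $f_{ij}$ with $v_i\ne v_j$ has its own $s_{ij}$-column. The at most two double vertices use disjoint index pairs, so their nonzero vertex blocks $(t_{ij},u_{ij})$ supply the remaining independence, and the Nash implicit function theorem gives dimension $14-6=8$. The paper differs only in two small ways. It proves smoothness on the larger open set of configurations with no three coincident vertices. It also needs no point-dependent chart choice: at a point where $f_{ij}=0$, vanishing of $\partial f_{ij}/\partial s_{ij}$ in either chart already forces $v_i=v_j$.

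The openness step, however, misreads the definition, and as written it does not show that $\Q$ is open. A quadrangle requires that \emph{some} three of the six sides extend the sides of a nondegenerate triangle. These three sides need not be of the form $L_{ij},L_{jk},L_{ik}$ for an index triple. Consider a quadruple-side quadrangle with $v_1=v_2=p$ and $v_3=v_4=q$. There $L_{13}=L_{14}=L_{23}=L_{24}$ is the line $pq$, so each of your four triples $\{L_{12},L_{13},L_{23}\}$, $\{L_{12},L_{14},L_{24}\}$, $\{L_{13},L_{14},L_{34}\}$, $\{L_{23},L_{24},L_{34}\}$ contains a repeated line. The nondegenerate triangle in this case is formed by $L_{12},L_{34},L_{13}$. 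So your union of four open sets leaves out every case~(c) quadrangle, even though your own Jacobian argument treats that case. The fix is immediate, and it is what the paper does: take the union over all triples of sides. For any triple, ``pairwise nonparallel and not concurrent'' is an open semialgebraic condition, and triples through a common vertex simply never satisfy it. With that correction your proof coincides with the paper's.
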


\begin{proof}
Let  ${\mathcal W} \subseteq(\R^2)^4\times(\P^1)^6$ be the set of vertex configurations  for which no three vertices coincide. If three vertices coincided, every side would pass through their common point, so no three sides could extend the sides of a nondegenerate triangle. Hence every quadrangle belongs to ${\mathcal W}$. Fix $Q=(v_i;[t_{ij}:u_{ij}])\in{\mathcal W}.$ For each $i<j$, let $U_{ij}$ be one of the two standard affine charts of $\P^1$ containing $[t_{ij}:u_{ij}]$, and let ${\mathcal U}=(\R^2)^4\times\prod_{i<j}U_{ij}$. Then $\mathcal U$ is an open   neighborhood of $Q$ in $(\R^2)^4\times(\P^1)^6$. If $U_{ij}$ is the chart $t_{ij}\ne0$, set $s_{ij}=u_{ij}/t_{ij}$ and define
\begin{equation}\label{f t chart}
f_{ij}=(x_i-x_j)+s_{ij}(y_i-y_j).
\end{equation}
If $U_{ij}$ is the chart $u_{ij}\ne0$, set $s_{ij}=t_{ij}/u_{ij}$ and define
\begin{equation}\label{f u chart}
f_{ij}=s_{ij}(x_i-x_j)+(y_i-y_j). \end{equation}
On the overlap of the two charts, the second function is $t_{ij}/u_{ij}$ times the first, so the two functions have the same zero set. Thus the set of vertex configurations lying in ${\mathcal U}$ is the common zero set of these six functions $f_{ij}$. The fourteen local coordinates on ${\mathcal U}$ are the four $(x_i,y_i)$ and the six $s_{ij}$, and via  these coordinates, ${\mathcal U}$ is identified with $\R^{14}$, and each $f_{ij}$ is polynomial in the local coordinates and so is a  Nash function. We claim 
that their six gradients at $Q$ are linearly independent.

Suppose there are $a_{ij} \in \R$ such that 
\begin{equation}\label{gradient relation}
\sum_{i<j}a_{ij}\,\nabla f_{ij}(Q)=0.
\end{equation}
We show that all of the $a_{ij}$ are zero by considering separately the cases $v_i\neq v_j$ and $v_i=v_j$. Suppose first that  $v_i\neq v_j$, and consider the $s_{ij}$-coordinate of this vector equality. For $\{k,\ell\}\neq\{i,j\}$, the function $f_{k\ell}$ is independent of $s_{ij}$, so the $s_{ij}$-coordinate of $\nabla f_{k\ell}(Q)$ is zero. Thus the $s_{ij}$-coordinate of the vector equality reduces to $$a_{ij}\frac{\partial f_{ij}}{\partial s_{ij}}(Q)=0.$$
From the equations~(\ref{f t chart}) and~(\ref{f u chart}),
$$\frac{\partial f_{ij}}{\partial s_{ij}}(Q) = \begin{cases} y_i-y_j,&t_{ij}\ne0,\\ x_i-x_j,&u_{ij}\ne0. \end{cases}$$
This partial derivative is nonzero since if it vanished in the first chart, then $y_i-y_j=0$, and the equation $f_{ij}(Q)=0$ would imply $x_i-x_j=0$. If it vanished in the second chart, the same equation would similarly imply $y_i-y_j=0$. Either conclusion would contradict $v_i\ne v_j$, and so we conclude that $a_{ij}=0$ when $v_i\ne v_j$.

Since the coefficients for pairs $\{i,j\}$ with $v_i\ne v_j$ are already zero, relation~(\ref{gradient relation}) now involves only the gradients associated to double vertices. Suppose $v_i = v_j$ for some  $i<j$, so that their common point is a double vertex. For the associated index pair, the six $s_{k\ell}$-coordinates are all zero since $f_{ij}$ is independent of $s_{k\ell}$ when $\{k,\ell\}\neq\{i,j\}$, while equations~(\ref{f t chart}) and~(\ref{f u chart}), together with the assumption $v_i=v_j$, imply $$\frac{\partial f_{ij}}{\partial s_{ij}}(Q)=0.$$
The only nonzero blocks of $\nabla f_{ij}(Q)$ occur in the $v_i$- and $v_j$-coordinates. These blocks are, respectively, $\pm(1,s_{ij})$ in the chart $t_{ij}\ne0$, and $\pm(s_{ij},1)$ in the chart $u_{ij}\ne0$. Two distinct double vertices involve disjoint pairs of indices, since otherwise three vertices would coincide. Consequently, the nonzero coordinate blocks of the gradient for $\{i,j\}$ do not occur in the gradient associated to any other double vertex. Inspection of either of these two blocks in a linear combination of the gradients shows that the coefficient of $\nabla f_{ij}(Q)$ must be zero, since each displayed block is nonzero. Hence these gradients are linearly independent. Therefore, $a_{ij}=0$ for every pair $\{i,j\}$ defining a double vertex, which shows that $a_{ij}=0$ in all cases.

The condition that no three vertices coincide is open and semialgebraic. Hence, for each choice of the affine charts $U_{ij}$, the intersection ${\mathcal W}\cap{\mathcal U}$ is an open semialgebraic subset of $F^{-1}(0)$, where $F=(f_{ij})_{i<j}$. Since the preceding argument shows that the six gradients, equivalently the six rows of the Jacobian of $F$, are linearly independent at every point of ${\mathcal W}$, the Jacobian has rank $6$ there. Thus the Nash implicit-function theorem \cite[Corollary~2.9.8, p.~57]{BCR}  shows that ${\mathcal W}$ is a Nash submanifold of the Nash manifold $(\R^2)^4\times(\P^1)^6$ of dimension $14-6=8.$ Finally, for each fixed triple of sides, the conditions that their normals be pairwise distinct and that the three lines not be concurrent are open semialgebraic conditions. Since $\Q$ is the finite union of these open sets over all triples of sides, $\Q$ is an open semialgebraic subset of ${\mathcal W}$. Therefore, $\Q$ is an $8$-dimensional Nash manifold.
\end{proof}

The fact that by Definition~\ref{quadrangle def} a quadrangle must contain a {nondegenerate} triangle is crucial in showing that the group ${\operatorname{Sim}(\R^2)}$ of similarity transformations  acts freely and properly on~$\Q$.

\begin{lemma} \label{free proper} Every  closed subgroup of ${\operatorname{Sim}(\R^2)}$ acts freely and properly on~$\Q$.
\end{lemma}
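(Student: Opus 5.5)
It suffices to prove that the full group $\operatorname{Sim}(\R^2)$ acts freely and properly on $\Q$. Freeness passes to every subgroup. For properness, take a closed subgroup $H$. The map $H\times\Q\to\Q\times\Q$, $(h,Q)\mapsto(hQ,Q)$, is the restriction of the corresponding map for $\operatorname{Sim}(\R^2)$ to the closed subset $H\times\Q$. A proper map stays proper when restricted to a closed subset, so properness also descends.

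\emph{Freeness.} Suppose $g\in\operatorname{Sim}(\R^2)$ and $g\cdot Q=Q$. Then $g$ fixes each vertex $v_i$ and maps each side $L_{ij}$ to itself. By Definition~\ref{quadrangle def}, three of the sides, say $L_{ij},L_{k\ell},L_{mn}$, extend the sides of a nondegenerate triangle. These three lines are pairwise nonparallel and not concurrent. Their three pairwise intersection points are therefore distinct and not collinear, and $g$ fixes all three because it preserves each line. An affine map that fixes three affinely independent points is the identity. Hence $g=\mathrm{id}$.

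\emph{Properness.} I will use the sequential criterion, which applies because everything is metrizable. Let $Q_n\to Q$ in $\Q$ and $g_n\in\operatorname{Sim}(\R^2)$ with $g_nQ_n\to Q'$ in $\Q$. I must show that some subsequence of $(g_n)$ converges in $\operatorname{Sim}(\R^2)$. Write $g_n(x)=\lambda_nR_nx+b_n$ with $\lambda_n>0$ and $R_n\in O(2)$. Since $O(2)$ is compact, I may pass to a subsequence with $R_n\to R$. It then remains to bound $\lambda_n$ away from $0$ and $\infty$ and to bound $b_n$.

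This is the main obstacle. The difficulty is that the vertices alone may coincide in the limit, as in the quadruple-side case, so vertex distances cannot control $\lambda_n$. I will use the triangular core instead. Since there are only finitely many triples of sides, I pass to a subsequence so that a fixed triple $T$ is the nondegenerate-triangle triple for $Q$, and a fixed triple $T'$ is the one for $Q'$.

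The condition "nonparallel and not concurrent" is open. So for large $n$ the triple $T$ is also a nondegenerate triangle for $Q_n$. The corner points of these triangles are intersections of the sides, and they depend continuously on the data near $Q$. Hence the triangles $\Delta_n$ of $Q_n$ converge to the nondegenerate triangle $\Delta$ of $Q$. Their perimeters $p_n$ therefore converge to some $p>0$.

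A similarity maps lines to lines and intersection points to intersection points. So $g_n\Delta_n$ is the triangle formed by the triple $T$ in $g_nQ_n$, and its perimeter is $\lambda_np_n$. Because $g_n\Delta_n$ is a similar image of $\Delta_n$, its angles equal those of $\Delta_n$ and stay bounded away from $0$ and $\pi$. Consequently the $T$-sides of $g_nQ_n$ remain pairwise nonparallel in the limit $Q'$.

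Here is the subtle point: these $T$-lines in $Q'$ might still be concurrent. To bound $\lambda_n$ I will compare with vertex data rather than appeal to the triangle directly. A nondegenerate triangle $\Delta$ whose sides are the lines $L_{ij}$, with $v_i,v_j\in L_{ij}$, contains no three coincident vertices, so by Proposition~\ref{degenerate classification} at least two of the vertices of $Q$ are distinct. I will use the ratio
$$\rho(Q)=\operatorname{perim}(\Delta)\,/\,\max_{i,j}|v_i-v_j|.$$
This ratio is continuous and positive near $Q$, and it is invariant under similarities. So the triangle perimeter of $g_nQ_n$ is comparable to its vertex diameter, which in turn is comparable to $\lambda_n\max_{i,j}|v_i^{(n)}-v_j^{(n)}|$. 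In the limit, both the vertex diameter of $Q'$ and the vertex diameter of $Q$ are positive. This gives $\lambda_n\to \operatorname{diam}(Q')/\operatorname{diam}(Q)\in(0,\infty)$, which I will check directly.

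Finally, $b_n=g_n(v_1^{(n)})-\lambda_nR_nv_1^{(n)}$. The first term converges to the first vertex of $Q'$, and the second is bounded, so $b_n$ is bounded. Passing to a further subsequence, $g_n\to g\in\operatorname{Sim}(\R^2)$, which proves properness.
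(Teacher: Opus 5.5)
Your proof is correct and, once the detour is removed, it is the paper's argument: freeness via the three fixed corners of the triangular core, and properness via $\lambda_n=\operatorname{diam}(g_n\cdot Q_n)/\operatorname{diam}(Q_n)\to\operatorname{diam}(Q')/\operatorname{diam}(Q)>0$, compactness of $O(2)$, and the first vertex to control $b_n$. Your reduction to the full similarity group by restricting to the closed set $H\times\Q$ is a harmless variant of the paper's use of closedness of $G$ at the end. Note, though, that the ``main obstacle'' you describe is not real. The vertex diameter is positive on every quadrangle, including quadruple-side ones, which have two distinct double vertices, so it controls $\lambda_n$ directly. The perimeter ratio $\rho$ and the discussion of the $T$-lines of $Q'$ can simply be deleted.
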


\begin{proof} Let $G$ be a closed subgroup of ${\operatorname{Sim}(\R^2)}$.  To see that $G$ acts freely on $\Quad$, let $g\in G$ and suppose $g\cdot Q=Q$ for some $Q\in\Quad$. Since $Q$ is a quadrangle, it has three sides that form the sides of a {nondegenerate} triangle. By the definition of the action, $g\cdot Q=Q$ implies that $g$ preserves each of these three labeled sides as sets, so it fixes their three pairwise intersections. Since these points are noncollinear, and the identity is the only affine transformation fixing three noncollinear points, $g$ is the identity, so $G$ acts freely on~$\Quad$.

 To prove properness, it suffices by \cite[Proposition~21.5, p.~543]{Lee} to consider sequences $Q_n\in\Q$ and $g_n\in G$ such that $Q_n\to Q$ and $g_n\cdot Q_n\to Q'$ in $\Q$, and to show that $(g_n)$ has a convergent subsequence. For $R=(v_i;[t_{ij}:u_{ij}])\in\Q$, set
$$d(R)=\max_{1\le i<j\le4}\|v_i-v_j\|.$$
The function $d$ is continuous and positive on $\Q$ since if $d(R)=0$, then all four vertices coincide and all six sides are concurrent, contrary to the definition of a quadrangle. Write $g_n(v)=\lambda_nB_nv+b_n,$ where $\lambda_n>0$, $B_n\in O(2)$, and $b_n\in\R^2$. Since $g_n$ scales distances by $\lambda_n$,
$$\lambda_n=\frac{d(g_n\cdot Q_n)}{d(Q_n)} \longrightarrow\frac{d(Q')}{d(Q)}>0.$$
{By compactness of $O(2)$, after passing to a subsequence we may assume that} $B_n\to B$. Let $v_{1,n}$ and $v'_{1,n}=g_n(v_{1,n})$ be the first vertices of $Q_n$ and $g_n\cdot Q_n$, respectively. Both vertex sequences converge, and so $b_n=v'_{1,n}-\lambda_nB_nv_{1,n}$ converges. Thus $(g_n)$ has a convergent subsequence in ${\operatorname{Sim}(\R^2)}$. Because $G$ is closed, the limit lies in $G$, which proves the lemma. \end{proof}

\begin{example} The same argument as in Lemma~\ref{free proper} shows that the action of the group ${\operatorname{Aff}(\R^2)}$ of affine transformations on $\Q$ is free. However, the action is not proper. To see this, let $Q_n$ be the quadrangle with vertices $$v_1=(0,0),\:\: v_2=(1,0),\:\:   v_3=(0,1),\:\: v_4=\left(1+\frac{1}{n},1\right).$$ Then $Q_n\in \Q$ for all $n$, and $Q_n \to Q$ where $Q$ is the rectangle with vertices $$ (0,0),\ (1,0),\ (0,1),\ (1,1).$$ Let $g_n$ be the affine transformation of $\R^2$ defined by  $g_n(x,y)=(x,y/n).$ The sequence ${g_n}$ leaves every compact subset of  ${\operatorname{Aff}(\R^2)}$ since the matrices $\operatorname{diag}(1,1/n)$ leave every compact subset of $\mathrm{GL}_2(\mathbb{R})$.

{The images under $g_n$ of the vertices of $Q_n$ are} $$g_n(v_1)=(0,0),\:\: g_n(v_2)=(1,0),\:\: g_n(v_3)=\left(0,\frac{1}{n}\right),\:\: g_n(v_4)=\left(1+\frac{1}{n},\frac{1}{n}\right).$$ The transformed sides are
$$\begin{array}{lll} g_n(L_{12}):y=0, & g_n(L_{13}):x=0, & g_n(L_{14}):y=x/(n+1),\\ g_n(L_{23}):y=(1-x)/n, & g_n(L_{24}):y=x-1, & g_n(L_{34}):y=1/n. \end{array}$$
   Thus $g_n(L_{12}),g_n(L_{14}),g_n(L_{23})$, and $g_n(L_{34})$ converge to $y=0$, while $g_n(L_{13})$ and $g_n(L_{24})$ converge to $x=0$ and $y=x-1$, respectively. Hence $g_n\cdot Q_n$ converges in $(\R^2)^4\times(\P^1)^6$ to a vertex configuration $Q'$. Its first and third vertices are $(0,0)$, its second and fourth vertices are $(1,0)$, and its six sides are the corresponding limits above. The three sides $L_{12},L_{13}$, and $L_{24}$ of $Q'$ have distinct normals and are not concurrent, so $Q'\in\Q$. Let $$K= \{Q_n:n\ge1\}\cup\{g_n \cdot Q_n:n\ge1\} \cup \{Q,Q'\}.$$ Then $K$ is compact and $g_nK\cap K\neq\varnothing$ for every $n$ since $g_n \cdot Q_n$ is in this intersection. However, since the sequence $\{g_n\}$ has no convergent subsequence in ${\operatorname{Aff}(\R^2)}$, it follows that $$G_K=\{g\in{\operatorname{Aff}(\R^2)}:gK\cap K\neq\varnothing\}$$ is not compact, and so the action of ${\operatorname{Aff}(\R^2)}$ on $\Q$ is not proper.
\end{example}

The following lemma, due to Sun, will be needed in Lemma~\ref{Nash lemma} and later results.

\begin{lemma} \cite[Lemma~2.7]{Sun} \label{Nash descent} Let $\pi:M\to B$ be a surjective Nash submersion, and let $f:B\to N$ be a map to a Nash manifold. Then $f$ is Nash if and only if $f\circ\pi$ is Nash. Consequently, if $F:M\to N$ is a Nash map that is constant on the fibers of $\pi$, then the induced map $\overline{F}:B\to N$ is Nash.
\end{lemma}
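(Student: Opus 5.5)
The statement is the cited Lemma~\ref{Nash descent}, and the plan is to reduce it to the existence of local Nash sections of $\pi$. Since $\pi$ is a Nash submersion, I will apply the Nash implicit function theorem \cite[Corollary~2.9.8]{BCR} in charts as follows. Take a point $x\in M$ with $b=\pi(x)$. In Nash charts around $x$ and $b$, $\pi$ has surjective differential. After permuting coordinates, the Nash inverse function theorem gives Nash coordinates near $x$ in which $\pi$ is the projection $(u,w)\mapsto u$. Hence there are a Nash open neighborhood $V_x$ of $b$ and a Nash map $\sigma_x:V_x\to M$ with $\pi\circ\sigma_x=\mathrm{id}_{V_x}$. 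Surjectivity of $\pi$ means such sections exist around every point of $B$.

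The forward direction is immediate, since compositions of Nash maps are Nash. For the converse, suppose $f\circ\pi$ is Nash. On each $V_x$ we have $f|_{V_x}=(f\circ\pi)\circ\sigma_x$, which is a composition of Nash maps and so is Nash, in particular real analytic. Real analyticity is local, so $f$ is real analytic on $B$.

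The remaining condition is that the graph of $f$ be semialgebraic. This is the one global point, and I expect it to be the main obstacle, because the cover $\{V_x\}$ is a priori infinite. I will avoid the cover entirely: the graph of $f$ equals $(\pi\times\mathrm{id}_N)(\Gamma_{f\circ\pi})$. Indeed, $(m,f(\pi(m)))\mapsto(\pi(m),f(\pi(m)))$, and this map is onto the graph because $\pi$ is surjective. The set $\Gamma_{f\circ\pi}$ is semialgebraic and $\pi\times\mathrm{id}_N$ is Nash, so the Tarski--Seidenberg theorem, applied chartwise using the finite atlases, shows that the image is semialgebraic. Together with analyticity, this proves $f$ is Nash.

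The consequence then follows. If $F$ is Nash and constant on the fibers of $\pi$, surjectivity of $\pi$ gives a well-defined map $\overline F$ with $\overline F\circ\pi=F$. Since $F$ is Nash, the first part shows that $\overline F$ is Nash.
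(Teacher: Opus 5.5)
Your proof is correct. The paper gives no proof of this lemma; it quotes it from \cite[Lemma~2.7]{Sun}. Your argument is therefore a self-contained replacement rather than a variant of something in the text.

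It also cleanly separates the roles of the two hypotheses. Surjectivity alone gives semialgebraicity of the graph, through $\Gamma_f=(\pi\times\mathrm{id}_N)(\Gamma_{f\circ\pi})$ and Tarski--Seidenberg. The submersion hypothesis is used only to get real analyticity from local Nash sections. This division is not an accident. Take the surjective Nash map $\pi(x)=x^3$ on $\R$, which is not a submersion at $0$, and $f(y)=y^{1/3}$: the composite $f\circ\pi$ is the identity and $\Gamma_f$ is semialgebraic, yet $f$ is not analytic at $0$.

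Elsewhere the paper invokes the local-section criterion \cite[Theorem~A.0.1]{AG}; you get the needed sections directly from the Nash inverse function theorem instead. Because your semialgebraicity step never uses the sections, you correctly avoid having to make the cover $\{V_x\}$ finite.
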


The next lemma is implicit in a theorem of Aizenbud, Carmeli, Gourevitch and Radzivilovski in  \cite{ACGR}, but since the version we need is not explicitly stated, we indicate how to derive what we need from the cited result.

\begin{lemma} \label{Nash lemma} {\em (cf.~\cite[Theorem~3.3.17]{ACGR})} If $G$ is a Nash group that acts Nashly, freely and properly on a Nash manifold $X$, then $X/G$ is a Nash manifold of dimension $\dim X-\dim G$ and $X/G$ has a unique Nash structure such that the quotient map $\pi: X \longrightarrow X/G$ is a submersion of Nash manifolds and   a principal $G$-bundle.
\end{lemma}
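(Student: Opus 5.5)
The plan is to reduce the statement to \cite[Theorem~3.3.17]{ACGR}, which, as I read it, says the following. If a Nash group acts Nashly on a Nash manifold, freely and with a closed graph in the appropriate sense, then the quotient carries a Nash structure in which the quotient map is a Nash submersion. That theorem is phrased for the existence of a geometric quotient. So the work is to check its hypotheses in our setting, and then to extract the three conclusions we want: the dimension, uniqueness of the Nash structure, and the principal bundle property.

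\textbf{Hypotheses.} First I would show that the action map
$$\Phi: G\times X\to X\times X,\qquad (g,x)\mapsto (x,g\cdot x)$$
is a Nash map. This is immediate, since the action is Nash. Freeness says $\Phi$ is injective. Properness of the action says $\Phi$ is a proper map, so its image $\Gamma$ is closed. Next I would check that $\Phi$ is an immersion. Its differential at $(g,x)$ is injective precisely because stabilizers are trivial: the infinitesimal stabilizer $\mathfrak g_x$ is the Lie algebra of the stabilizer $G_x=\{e\}$, so it is zero, and the orbit map $g\mapsto g\cdot x$ is therefore an immersion. An injective proper immersion is a closed embedding, and $\Phi$ is a semialgebraic map, so $\Gamma$ is a closed Nash submanifold of $X\times X$. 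These are the hypotheses of the ACGR result, with the equivalence relation $\Gamma$ closed and the projection $\Gamma\to X$ a Nash submersion. I expect the main obstacle to be matching the precise hypotheses of the cited theorem, such as any affineness conditions on the Nash manifolds or a ``Nash equivalence relation'' formulation. If needed, I would note that $\Q$ is affine, since it is a semialgebraic subset of $\R^8\times(\P^1)^6$, which embeds in $\R^N$.

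\textbf{Conclusions.} Once $X/G$ is a Nash manifold and $\pi$ is a surjective Nash submersion, the dimension follows because the fibers are the orbits, and these are diffeomorphic to $G$; hence $\dim X/G=\dim X-\dim G$. Uniqueness follows from Lemma~\ref{Nash descent}. If two Nash structures both make $\pi$ a submersion, apply the lemma to the identity map on $X/G$ with each structure as source. Since identity$\circ\pi=\pi$ is Nash, the identity is Nash in both directions, so the two structures coincide.

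\textbf{Principal bundle.} For the principal $G$-bundle property I would produce local Nash sections. A submersion admits local sections by the Nash implicit function theorem \cite{BCR}, and finitely many suffice by the finiteness of Nash atlases, since a Nash manifold has a finite cover by charts. Given a section $\sigma$ over $U$, the map $U\times G\to\pi^{-1}(U)$, $(u,g)\mapsto g\cdot\sigma(u)$, is a Nash bijection by freeness. Its inverse sends $x$ to $(\pi(x),g(x))$, where $g(x)$ is characterized by $g(x)\cdot\sigma(\pi(x))=x$. This inverse has semialgebraic graph, and it is analytic because $\Phi^{-1}$ is analytic on $\Gamma$. So we get a $G$-equivariant Nash trivialization, which completes the proof.
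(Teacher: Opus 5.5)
Your proof is correct, and its skeleton matches the paper's. In both, one reduces to \cite[Theorem~3.3.17]{ACGR} by showing that the image of $(g,x)\mapsto(x,g\cdot x)$ is a closed, semialgebraic, embedded submanifold of $X\times X$ with submersive first projection. Uniqueness then comes from Lemma~\ref{Nash descent}, and the bundle structure from Nash local sections.

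The real difference is how you verify the embedding hypothesis. The paper first applies the smooth quotient manifold theorem, so that $X/G$ is already a smooth manifold and $\pi$ a smooth submersion. It then identifies the image with the fiber product $X\times_{X/G}X$ and reads off embeddedness and submersivity of the first projection in local product coordinates. The dimension and the smooth principal-bundle structure are likewise imported from the smooth theory, then upgraded to Nash via \cite[Proposition~2.9]{Sun} and the fact that a smooth semialgebraic map is Nash.

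You instead show directly that $\Phi$ is an injective proper immersion, hence a closed embedding, so that $\Gamma$ is Nash diffeomorphic to $G\times X$. This avoids the smooth quotient theorem altogether and gives the remaining ingredients at once:
\begin{itemize}
\item submersivity of $\Gamma\to X$, because $p_1\circ\Phi=\mathrm{pr}_X$ for the first projection $p_1$; you assert this but should say why;
\item analyticity of the map $x\mapsto g(x)$ in your inverse trivialization;
\item the dimension count, since the orbits $\Phi(G\times\{x\})$ are embedded copies of $G$ and are exactly the fibers of the Nash submersion $\pi$.
\end{itemize}
The paper's route lets standard smooth results do the work and upgrades them to Nash at the end. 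Yours is more self-contained and stays in the analytic category throughout.

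Two minor points. First, your claim that finitely many local sections suffice ``by the finiteness of Nash atlases'' is not justified, since chart domains of $X/G$ need not carry sections. The claim is not needed for the statement; if you want a finite trivializing cover, it should come from the Nash local-section theorem \cite[Theorem~A.0.1]{AG}, which the paper cites. Second, the remark that $\Q$ is affine is beside the point. The lemma concerns an arbitrary $X$, and the hypotheses you check are exactly the ones the paper verifies before invoking \cite{ACGR}.
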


\begin{proof}
  To prove the lemma,  it is enough by  a theorem of Aizenbud, Carmeli, Gourevitch and Radzivilovski \cite[Theorem~3.3.17]{ACGR} to show that the image $R$ of the map $$\nu:G \times X \longrightarrow X \times X:(g,x) \longmapsto (x,g \cdot x)$$ is embedded, closed and semialgebraic, and the projection map of $R$ onto the first component of $X \times X$ is a Nash map that is a submersion of smooth manifolds.

Since Nash maps have semialgebraic graphs, the map $\nu$ is semialgebraic, and hence $R=\nu(G\times X)$ is semialgebraic by the Tarski-Seidenberg Theorem \cite[Proposition~2.2.7, p.~29]{BCR}. By assumption, $G$ acts freely and properly on $X$.  Properness of the action means that the map $$G\times X \longrightarrow X\times X: (g,x)\longmapsto (g\cdot x,x)$$
is proper, and hence, after swapping the two factors of $X\times X$, the map $\nu$ is proper. Since $X\times X$ is locally compact, the image $R$ of this proper map is closed.

By the smooth quotient manifold theorem \cite[Theorem 21.10, pp.~544--545]{Lee}, $X/G$ is a smooth manifold, the quotient map $\pi:X\to X/G$ is a smooth submersion, and $\dim(X/G)=\dim X-\dim G.$ The orbit relation is the fiber product
$$R=\{(x,y)\in X\times X:\pi(x)=\pi(y)\}=X\times_{X/G}X.$$ Let $(x,y)\in R$ and set $q=\pi(x)=\pi(y)$. By the submersion theorem, after shrinking around $q$, there are charts about $x$ and $y$ whose coordinate images are $U\times V_1$ and $U\times V_2$, respectively, in which $\pi$ is projection onto $U$. Thus points in the two charts have coordinates $(u,a)$ and $(v,b)$, respectively, and $\pi(u,a)=u$ and $ \pi(v,b)=v.$ In the resulting product chart, $R$ is the set
$$\{((u,a),(u,b)):u\in U,\ a\in V_1,\ b\in V_2\}.$$ This is an embedded submanifold because it is the image of the smooth retraction $$((u,a),(v,b))\longmapsto((u,a),(u,b)).$$
Since $(x,y)$ was arbitrary, $R$ is a smooth embedded submanifold of $X\times X$. In these coordinates, the first projection $R\to X$ is $(u,a,b)\mapsto(u,a),$ and hence is a smooth submersion.

 Since $R$ is closed and semialgebraic and is a smooth embedded submanifold of the Nash manifold $X\times X$, \cite[Proposition~2.9]{Sun} shows that it carries the induced structure of an embedded Nash submanifold. The first projection $R\to X$ is Nash, being the restriction of the Nash projection $X\times X\to X$, and the preceding paragraph shows that it is a smooth submersion. Thus it is a Nash submersion. This verifies all the hypotheses of \cite[Theorem~3.3.17]{ACGR}, so $X/G$ is a Nash manifold and $\pi:X\to X/G$ is a Nash submersion.

It remains to prove the principal-bundle assertion and uniqueness. It is a   consequence of the quotient manifold theorem 
\cite[Problem~21-6, p.~560]{Lee} that $\pi$ is a smooth principal $G$-bundle. Since $\pi$ is a Nash submersion, local sections of~$\pi$ may be chosen Nash \cite[Theorem~A.0.1]{AG}. For such a section $\sigma:U\to X$, the corresponding smooth local trivialization
$$G\times U\longrightarrow\pi^{-1}(U):(g,y)\longmapsto g\cdot\sigma(y)$$
is Nash because the action and $\sigma$ are Nash. Its inverse is smooth, and it is semialgebraic because its graph is obtained from the graph of the original map by interchanging the two factors. Hence its inverse is Nash, and $\pi$ is a Nash principal $G$-bundle.

Finally, suppose that $X/G$ carries two Nash structures for which the quotient map is a Nash submersion, and write $\pi_i:X\to(X/G)_i$ for these two Nash quotient maps. The identity map $i:(X/G)_1\to(X/G)_2$ satisfies $i\circ\pi_1=\pi_2$. Since $\pi_1$ is a surjective Nash submersion and $\pi_2$ is Nash, Lemma~\ref{Nash descent} shows that $i$ is Nash. Reversing the roles of the two structures shows that $i^{-1}$ is Nash. Thus the two Nash structures coincide, which proves uniqueness and completes the proof.
 \end{proof}

\begin{theorem} \label{main Nash}
Let $G$ be a Nash subgroup of ${\operatorname{Sim}(\R^2)}$. The orbit space $\Quad/G$ is a Nash manifold of dimension $8-\dim G$ and has a unique Nash structure such that the quotient map $\pi: \Quad \rightarrow \Quad/G$ is a submersion of Nash manifolds and   a principal $G$-bundle.

\end{theorem}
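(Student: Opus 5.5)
The plan is to apply Lemma~\ref{Nash lemma} with $X=\Q$ and the given Nash subgroup $G$. Three hypotheses must be checked: $\Q$ is a Nash manifold, $G$ is a Nash group, and $G$ acts Nashly, freely and properly on $\Q$. The first is Lemma~\ref{Q is Nash}, which gives $\dim\Q=8$, so the dimension formula $8-\dim G$ follows once the lemma applies.

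For the group, $G$ is a semialgebraic subgroup of $\operatorname{Aff}(\R^2)\cong{\rm GL}_2(\R)\times\R^2$. By \cite[Proposition~3.2]{Sun} it is a closed Nash submanifold of $\operatorname{Aff}(\R^2)$. Multiplication and inversion on $\operatorname{Aff}(\R^2)$ are given by rational (regular) formulas, so they are Nash. Their restrictions to the Nash submanifold $G$ are therefore Nash, and $G$ is a Nash group. Since $\operatorname{Sim}(\R^2)$ is itself semialgebraic, $G$ is also closed in $\operatorname{Sim}(\R^2)$. Lemma~\ref{free proper} then shows that $G$ acts freely and properly on $\Q$.

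The remaining hypothesis, which I expect to be the only step needing care, is that the action map $G\times\Q\to\Q$ is Nash. I would check this on $\operatorname{Aff}(\R^2)\times(\R^2)^4\times(\P^1)^6$ and then restrict. On vertices, $g=(S,b)$ acts by $v\mapsto Sv+b$, which is polynomial. On normals, $[t:u]\mapsto[S^{-T}(t,u)^T]$. In any pair of affine charts of $\P^1$, this is a rational function of the entries of $S$ and of the chart coordinate $s$, with nonvanishing denominator wherever the image lies in the target chart. Hence the map is Nash on each product chart, and so Nash globally. Since $\Q$ is invariant, as already noted after the definition of the action, and $G\times\Q$ is a Nash submanifold of $\operatorname{Aff}(\R^2)\times(\R^2)^4\times(\P^1)^6$, the restriction $G\times\Q\to\Q$ is Nash. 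If one prefers to avoid the $S^{-T}$ formula, the normal of $g(L)$ is also proportional to $\det(S)\,S^{-T}(t,u)^T$, which is polynomial in $S$.

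With all hypotheses verified, Lemma~\ref{Nash lemma} gives the conclusions directly. The orbit space $\Q/G$ is a Nash manifold of dimension $8-\dim G$. It carries a unique Nash structure for which $\pi$ is a Nash submersion, and $\pi$ is a principal $G$-bundle.
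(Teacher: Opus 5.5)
Your proposal is correct and follows the paper's route: Lemma~\ref{Q is Nash} for the manifold structure, Lemma~\ref{free proper} for freeness and properness, Nashness of the restricted affine action, and then Lemma~\ref{Nash lemma}. You spell out details the paper leaves implicit, namely that $G$ is a Nash group and the chart check for the action on normals. One small point: closedness of $G$ in $\operatorname{Sim}(\R^2)$ follows directly from its closedness in $\operatorname{Aff}(\R^2)$, so semialgebraicity of $\operatorname{Sim}(\R^2)$ is not needed there.
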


\begin{proof}  By Lemma~\ref{Q is Nash}, $\Q$ is a Nash manifold of dimension $8$. Let $G$ be a Nash subgroup of ${\operatorname{Sim}(\R^2)}$. By Lemma~\ref{free proper}, $G$ acts freely and properly on $\Quad$. The action of ${\operatorname{Aff}(\R^2)}$ on $(\mathbb{R}^2)^4 \times (\mathbb{RP}^1)^6$ is Nash and preserves $\Q$. Hence its restriction to every Nash subgroup $G\subseteq{\operatorname{Sim}(\R^2)}$ is a Nash action on $\Q$. The theorem now follows from Lemma~\ref{Nash lemma}.
 \end{proof}

\begin{remark} \label{metric remark}
Let $G$ be a Nash subgroup of the group of isometries of the plane, and let $\rho$ denote the restriction to $\Q$ of the product     distance on $(\mathbb{R}^2)^4 \times (\mathbb{RP}^1)^6$ determined by the Euclidean metric on $\mathbb{R}^2$ and the round quotient metric on $\mathbb{RP}^1$. Since $G$ acts properly on $\Q$ by Lemma~\ref{free proper} and preserves $\rho$, 
     \cite[Theorem~4.3.4, pp.~318--319]{Palais} shows that the orbit space $\Q/G$ has a metric inducing the quotient topology, given by
$$\overline{\rho}([Q_1],[Q_2]) =\inf\{\rho(Q_1',Q_2'):Q_1'\in[Q_1],\ Q_2'\in[Q_2]\}.$$
\end{remark}

\smallskip

\section{The form of a quadrangle}\label{form section}

In this section we associate to a quadrangle an invariant that we call its {quadrangle }form. The form is unchanged under translations and uniform scalings, and in the next section we will use it to define a similarity invariant, aspect, for quadrangles. To define our invariant, we first establish some terminology.

If $H\in\R[X,Y]$ has degree~$2$, let $C_H$ denote the affine conic represented by the equation class v$[H]=\{\lambda H:\lambda\in\R^\times\}.$ We call $H$ a {\it defining polynomial} of $C_H$. Its  real point set is $C_H(\R)=V(H)=\{p\in\R^2:H(p)=0\}.$ The projective realization of $C_H$ is defined by the degree-two homogenization of $H$. We call $C_H$ a {\it hyperbola} if its projective realization meets the line at infinity in two distinct real points.  A point  $p$ is a center of $C_H$ if $\nabla H(p)=0$.

Pairs of opposite sides of a quadrangle can be viewed as conics in the following way. If $\mathsf A=\{A,A'\}$ is a pair of lines, a {\it line-pair polynomial} for $\mathsf A$ is a product $F_{\mathsf A}=\ell_A\ell_{A'}$, where $\ell_A$ and $\ell_{A'}$ are nonzero 
linear polynomials whose zero sets are $A$ and $A'$. If the two sides coincide, we use the same polynomial twice. A line-pair polynomial is uniquely determined up to multiplication by a nonzero scalar  and thus determines a conic. Two pairs of opposite sides are said to be distinct if they are distinct as unordered pairs of lines.

The quadrangle form, which we define next, depends on the what we call the center locus of  the quadrangle. The reason for this name is explained by Proposition~\ref{form coefficients} and illustrated by Figure~\ref{center locus figure}.

\begin{definition} \label{form def} For any two polynomials $F,G\in\R[X,Y]$, set
$$J(F,G)=(\partial_XF)(\partial_YG)-(\partial_YF)(\partial_XG).$$
Let $Q$ be a quadrangle, and let $F,G$ be line-pair polynomials for two distinct pairs of opposite sides of $Q$.  We call the conic defined by $J(F,G)=0$  the {\it {center-}locus conic} $C_Q$ of $Q$. Thus $C_Q=C_{J(F,G)}$. Its real point set $C_Q(\R)=V(J(F,G))$ is called the {\it {center }locus} of $Q$.  Write the homogeneous quadratic part of the Jacobian  as $$J(F,G)_2=2aX^2-4bXY+2cY^2.$$ The {\it {quadrangle }form} of $Q$ is
$$\operatorname{form}(Q)=[a:b:c]\in\P^2.$$
 To the representative $(a,b,c)$ of the form, associate the symmetric matrix
\begin{equation}\label{matrixx}
M(a,b,c)=
\begin{pmatrix}
2a&-2b\\
-2b&2c
\end{pmatrix}.
\end{equation}

\end{definition}

Proposition~\ref{form coefficients} will show that $J(F,G)$ has degree $2$ and that the {center-}locus conic, {center }locus, and {quadrangle }form are independent of the choices of pairs of opposite sides and line-pair polynomials. Note also that
\begin{equation}\label{jacobian quadratic part}
J(F,G)_2=(X,Y)M(a,b,c)(X,Y)^T.
\end{equation}

We can interpret the {center }locus geometrically. Let $Q$ be a quadrangle, and let $F,G$ be line-pair polynomials for two distinct pairs of opposite sides of $Q$. The {\it pencil} of $Q$ is the set of conics $C_{\lambda F+\mu G}$, where $[\lambda:\mu] \in \P^1$. When $Q$ is {nondegenerate}, {a conic belongs to this pencil if and only if it contains the four vertices of $Q$}. The next proposition identifies the {center }locus with the set of centers of the members of the pencil of $Q$. 

\begin{figure}[htbp]
\centering
\includegraphics[width=.75\textwidth]{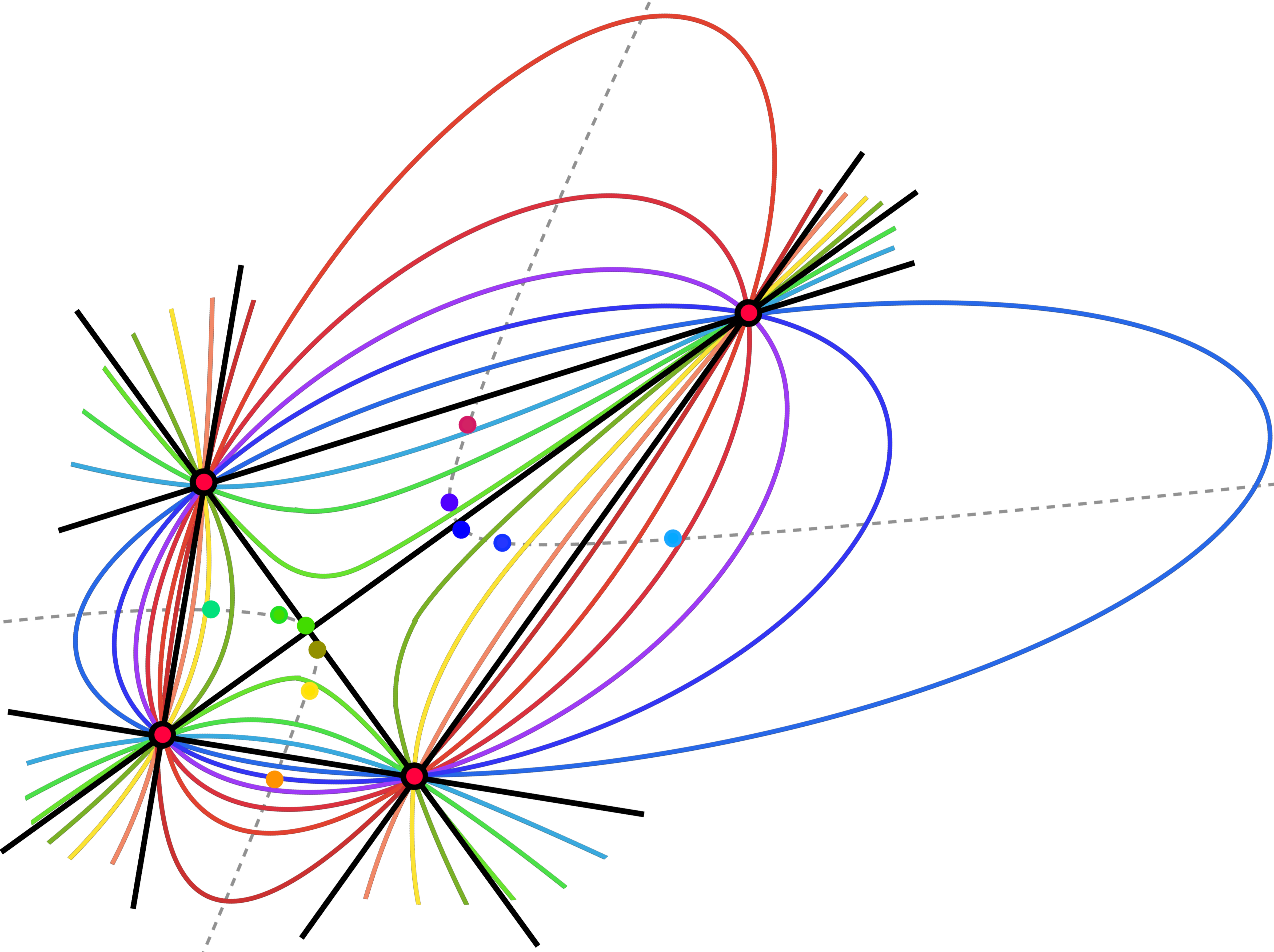}
\caption{Conics in the pencil of a   quadrangle, whose sides are shown in black. The colored points mark the centers of the corresponding conics and lie on the dashed gray center locus.}
\label{center locus figure}
\end{figure}

As follows from the definition and Proposition~\ref{form coefficients}, for a {nondegenerate} quadrangle, the projective realization of the {center-}locus conic is   the classical {\it nine-point conic} of the quadrangle \cite[Section~V]{Mint}.  It is so called because it passes through the six side midpoints of the quadrangle and the three points of intersection of opposite sides, some of which may be at infinity. ({The midpoint of a side $L_{ij}$ means the midpoint of its two vertices $v_i$ and $v_j$.}) For historical background on this conic, see \cite{Del,Vac}. In the terminology of \cite[Theorem~{7.2}]{OWEnt}, where the pencil of a quadrangle is studied in more detail, this {center }locus is also the set of {finite} bisection points of the  bisectors of~$Q$; see also \cite{OWQuad}.

The {center-}locus calculation in the next proposition is classical for a nondegenerate quadrangle \cite[No.~8, p.~171]{Casey}, but we give an argument since our quadrangles can be degenerate.

\begin{proposition}\label{form coefficients}
The pencil, {center-}locus conic, {center }locus, and {quadrangle }form of a quadrangle $Q$ are well-defined. The {center }locus {is the set of points} in $\R^2$ that are centers of members of the pencil of $Q$.
\end{proposition}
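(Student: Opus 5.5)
The plan is to reduce everything to one algebraic fact about the line-pair polynomials and then read off the center-locus statement from the definition of $J$. Write $F_1,F_2,F_3$ for line-pair polynomials of the three pairs of opposite sides, $\{L_{12},L_{34}\}$, $\{L_{13},L_{24}\}$, $\{L_{14},L_{23}\}$. The key claim is:
\begin{itemize}
\item[(i)] any two of these that are distinct as line pairs span the same $2$-dimensional subspace $P_Q\subseteq\R[X,Y]_{\le 2}$;
\item[(ii)] for such a pair $F,G$, the quadratic part $J(F,G)_2$ is nonzero.
\end{itemize}
Given these, well-definedness is formal.

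First, $J$ is bilinear and alternating. So if $\{F',G'\}$ is another admissible choice, then $F'=\alpha F+\beta G$ and $G'=\gamma F+\delta G$ with $\alpha\delta-\beta\gamma\neq0$, and
$$J(F',G')=(\alpha\delta-\beta\gamma)\,J(F,G).$$
The same identity covers rescaling a line-pair polynomial. Consequently the pencil $\{C_{\lambda F+\mu G}\}$ is the projectivization of $P_Q$, and the equation class $[J(F,G)]$ is independent of all choices. Hence $C_Q$, $V(J(F,G))$ and $[a:b:c]$ are well defined.

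Since $F$ and $G$ have degree at most $2$, $J(F,G)$ has degree at most $2$. Its degree-$2$ part is exactly $J(F_2,G_2)$, where $F_2,G_2$ are the quadratic parts, namely the products of the homogeneous linear forms of the normals. For binary quadratic forms, $J(F_2,G_2)=0$ if and only if $F_2$ and $G_2$ are proportional. So (ii) amounts to saying that the two pairs of opposite sides do not have the same multiset of directions.

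To prove (i) and (ii), I would split according to Proposition~\ref{degenerate classification}.

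In the nondegenerate case, all three $F_k$ vanish at the four vertices. Four points with no three collinear impose four independent conditions on the $6$-dimensional space of polynomials of degree $\le2$, so the $F_k$ lie in a $2$-dimensional space. Any two of them are independent, since they have different zero sets. For (ii): if two opposite-side pairs had the same direction multiset, some side from one pair would be parallel to an adjacent side from the other pair. These share a vertex, so they would coincide, and three vertices would be collinear, which is a contradiction.

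For the degenerate cases (a), (b), (c), I would use the affine equivariance of the whole setup to normalize coordinates. Under an affine map with linear part $S$, each $F_k$ transforms by composition (up to scalar) and $J$ picks up the factor $\det S$. So I may place the double vertex, or the two double vertices, at convenient points with the triple or quadruple side along $Y=0$. Then I write the $F_k$ explicitly and check directly that any two distinct line pairs among them span the same plane and have nonproportional quadratic parts.

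In case (a), two of the $F_k$ coincide up to scalar. These do not count as distinct pairs, so only the pairs involving the third $F_k$ matter. In cases (b) and (c), the nondegenerate triangle in the definition of a quadrangle supplies the needed non-parallelism. This case check, especially confirming (ii) and the span equality in the triple-side and quadruple-side configurations, is the step I expect to be the main obstacle. It is essentially a finite computation once coordinates are normalized.

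Finally, for the center-locus statement, a point $p$ is a center of $C_{\lambda F+\mu G}$ exactly when $\lambda\nabla F(p)+\mu\nabla G(p)=0$. A nontrivial $[\lambda:\mu]$ with this property exists if and only if $\nabla F(p)$ and $\nabla G(p)$ are linearly dependent, that is, if and only if the determinant $J(F,G)(p)$ vanishes. This includes the case where both gradients vanish. Hence $V(J(F,G))$ is precisely the set of points of $\R^2$ that are centers of members of the pencil of $Q$.
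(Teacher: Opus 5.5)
\paragraph{Verdict.} Your skeleton matches the paper's: reduce to (i) a common two-dimensional span and (ii) non-proportional quadratic parts, then use the alternating bilinearity of $J$ and the gradient-dependence criterion for centers. The gap is the degenerate quadrangles, which are the only reason the paper reproves this classical fact. You defer them to a coordinate check that you never carry out, so as written you have proved the proposition only for nondegenerate $Q$.

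\paragraph{Closing (ii).} No computation is actually needed. Your nondegenerate argument works for every quadrangle if you use both coincidences instead of deriving collinearity. Suppose $\{n_A,n_{A'}\}=\{n_B,n_{B'}\}$ as multisets. Then each side of $\{A,A'\}$ is parallel to a side of $\{B,B'\}$. Any side of one pair of opposite sides is adjacent to every side of another pair, so matched sides share a vertex and therefore coincide. Hence $\{A,A'\}=\{B,B'\}$, contradicting distinctness. This is exactly the paper's argument. It also shows that every nonzero $\lambda F+\mu G$ has degree $2$. You should state this explicitly, since otherwise $C_{\lambda F+\mu G}$ need not be a conic and the pencil is not a set of conics.

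\paragraph{Closing (i).} In cases (a) and (c) of Proposition~\ref{degenerate classification} there are only two distinct line pairs; in (c), the two pairs lying in the quadruple side $T$ are both $\{T,T\}$. So span equality is automatic once you know that distinct line pairs have distinct zero sets. In case (b), relabel so that $L_{12}=L_{13}=L_{23}=T$ and $v_4\notin T$. Each pair of opposite sides is then $\{T,L_{k4}\}$, so its line-pair polynomial is $\ell_T g$ with $g$ linear and $g(v_4)=0$. Such $g$ form a two-dimensional space, and distinct pairs give non-proportional $g$. Hence any two distinct line-pair polynomials span $\ell_T\cdot\{g: g(v_4)=0\}$.

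With these two observations your proof is complete and coincides with the paper's. Your dimension count in the nondegenerate case is a valid self-contained substitute for the paper's citation that conics through four general points form a pencil.
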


\begin{proof}
Let $F,G$ be line-pair polynomials for two distinct pairs of opposite sides of $Q$. We first show that any two such polynomials form a basis of the same two-dimensional vector space. If $Q$ is nondegenerate, its three pairs of opposite sides are distinct degenerate conics through the four vertices. By Theorem~5.13 of  \cite{BambergPenttila} and the paragraph following it, the conics through these vertices form a pencil. Hence any two of the three pairs of opposite sides generate the two-dimensional vector space underlying this pencil. If $Q$ is degenerate, we can apply Proposition~\ref{degenerate classification}. In cases~(a) and~(c), there are only two distinct pairs of opposite sides.  In case~(b), after fixing an equation $\ell_T$ for the triple side $T$, the polynomials of the distinct pairs of opposite sides have the form $\ell_Tg_i$, where the $g_i$ are linear equations vanishing at the vertex off $T$. Distinct pairs {correspond to} linearly independent $g_i$, and any two such forms {form} a basis of the two-dimensional space of linear polynomials vanishing at that vertex. It follows that the pencil is independent of the chosen pairs and line-pair polynomials.

Let $F_2,G_2$ be the homogeneous degree-two parts of $F,G$. These polynomials  are not scalar multiples of each other, since otherwise the two pairs of opposite sides would have the same two normals, with multiplicity, so that matching parallel sides from the two pairs would be adjacent and hence coincide, contrary to the pairs being distinct. Consequently, every nonzero element of $\operatorname{span}\{F,G\}$ has degree~$2$. Write $F_2=a_1X^2+a_2XY+a_3Y^2$ and $G_2=b_1X^2+b_2XY+b_3Y^2.$ Then
$$J(F,G)_2 =2(a_1b_2-a_2b_1)X^2+4(a_1b_3-a_3b_1)XY +2(a_2b_3-a_3b_2)Y^2.$$
The three expressions in parentheses are the $2\times2$ minors of
$$\begin{pmatrix} a_1&a_2&a_3\\ b_1&b_2&b_3 \end{pmatrix}.$$
Since $F_2$ and $G_2$ are not scalar multiples, this matrix has rank~$2$, so at least one of these minors is nonzero. Therefore, $J(F,G)_2\ne0$, and hence $J(F,G)$ has degree~$2$.

If $F',G'$ arise from another choice of pairs of opposite sides, then $F'=rF+sG$ and $G'=tF+uG,$ where $ru-st\ne0$. {By the alternating bilinearity of the Jacobian determinant,} $J(F',G')=(ru-st)J(F,G).$ Thus the {center-}locus conic, {center }locus, and {quadrangle }form are independent of all choices. Finally, a point $p\in\R^2$ is the center of a member of the pencil exactly when $\lambda\nabla F(p)+\mu\nabla G(p)=0$ for some nonzero pair $(\lambda,\mu)$. This is equivalent to $J(F,G)(p)=0$, which proves the assertion about the {center }locus.
\end{proof}

The {quadrangle }form defines a map from the Nash manifold $\Quad$ to $\P^2$. In fact, as we see next, it is a submersion, a property that is crucial for the analysis of the manifold $\Q$ in the next sections because it leads ultimately to a decomposition of $\Quad$ into Nash level submanifolds based on aspect.

We recall the local-section criterion for Nash submersions \cite[Theorem~A.0.1 and Proposition~A.0.4]{AG}, which asserts  that if $f:X\to Y$ is a Nash submersion at $x_0$ and $y_0=f(x_0)$, then there is an open neighborhood $W\subseteq Y$ of $y_0$ and a Nash map $\sigma:W\to X$ such that $\sigma(y_0)=x_0$ and $f\circ\sigma=\operatorname{id}_W$. Conversely, the existence of such a local section implies that $f$ is a submersion at $x_0$, by differentiating $f\circ\sigma=\operatorname{id}_W$. For the form map, the existence of a local section shows  that every sufficiently small variation of the form of $Q$ can be realized by quadrangles depending Nashly on the form and passing through $Q$.

\begin{theorem} \label{form submersion} The form  map from $\Quad$ to $ \P^2$ is a Nash submersion.

\end{theorem}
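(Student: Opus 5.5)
The plan is to prove two things separately: that the form map is Nash, and that its differential is surjective at every $Q\in\Q$.

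\emph{The form map is Nash.} By Definition~\ref{form def}, $\operatorname{form}(Q)$ depends only on the homogeneous quadratic parts $F_2,G_2$ of two line-pair polynomials. Each of these is a product of the linear forms $t_{ij}X+u_{ij}Y$ attached to a pair of opposite sides. Fix a point of $\Q$ and choose two pairs of opposite sides that are distinct there; this condition is open. By the computation in the proof of Proposition~\ref{form coefficients}, $[a:b:c]$ is then, up to the constants $2,4$, given by the three $2\times2$ minors of the coefficient matrix of $F_2,G_2$. In the affine charts $U_{ij}$ used in Lemma~\ref{Q is Nash}, these minors are polynomials in the $s_{ij}$, and they do not all vanish. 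So locally the form map is a polynomial map followed by the quotient to $\P^2$, hence Nash. Proposition~\ref{form coefficients} makes the local formulas glue to a single map.

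\emph{Reduction via equivariance.} The affine action on normals is $[t:u]\mapsto S^{-T}(t,u)^T$. Under it each quadratic part transforms by congruence, $F_2\mapsto F_2\circ S^{-1}$, and the Jacobian picks up a factor $\det S^{-1}$. Hence $M(\operatorname{form}(g\cdot Q))$ is proportional to $S^{-T}M(\operatorname{form}(Q))S^{-1}$. The differential of the form map at $Q$ therefore contains the tangent space at $\operatorname{form}(Q)$ of the $\mathrm{GL}_2(\R)$-orbit of $[M]$ under congruence. Call a point \emph{definite}, \emph{indefinite} or \emph{parabolic} according as $\det M>0$, $\det M<0$ or $\det M=0$. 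In the definite and indefinite cases the orbit is open in $\P^2$, so the differential is surjective there. (I would also use that the GL$_2$-orbit map admits Nash local sections; this gives explicit Nash local sections of the form map, matching the local-section criterion quoted before the theorem.)

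\emph{The parabolic locus.} Here $M$ has rank one and its orbit is the conic $\{\det M=0\}$, which has codimension $1$. It remains to find one tangent vector to $\Q$ at $Q$ along which $\det M(\operatorname{form})$ has nonzero derivative. For a nondegenerate quadrangle, the nine-point conic is a parabola exactly when three vertices are collinear, so parabolic points should be precisely the degenerate quadrangles. I would verify this by computing $\det M$ for a normalized triangle $v_1=(0,0)$, $v_2=(1,0)$, $v_3=(0,1)$ with $v_4$ free. I expect $\det M$ to factor as a product of the three side-line equations of the triangle evaluated at $v_4$ (up to a nonvanishing factor), which would identify the parabolic quadrangles directly.

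I would then treat the three degenerate types of Proposition~\ref{degenerate classification}. In type~(b) and in the four-distinct-vertices case, moving $v_4$ transversally off the collinear line changes the sign of that linear factor to first order, and this gives the missing direction. In types~(a) and~(c), I would instead rotate the normal of a side whose two vertices coincide. Since $v_i=v_j$, this is a free coordinate $s_{ij}$ in the chart of Lemma~\ref{Q is Nash}, so the rotation is tangent to $\Q$. I would then check by a short explicit computation, after normalizing by a similarity, that $\partial\det M/\partial s_{ij}\neq0$.

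I expect this last step, the case analysis at parabolic, degenerate quadrangles (especially the quadruple-side case), to be the main obstacle. The first thing to try is an explicit normal form: $p=(0,0)$, $q=(1,0)$ for type~(c), and $L_{12}$, $L_{34}$ with slopes $m_1\neq m_2$.
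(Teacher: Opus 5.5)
Your route differs from the paper's and is sound in outline. The form map is equivariant for the congruence action $[M]\mapsto[S^{-T}MS^{-1}]$. That action has three orbits in $\P^2$: the open sets $\{\det M>0\}$ and $\{\det M<0\}$, and the smooth conic $\{\det M=0\}$. So the differential is automatically surjective off the conic. On the conic it suffices to find one tangent vector to $\Q$ along which $\det M$, for a local lift, has nonzero derivative. This explains conceptually where the only difficulty lies. The paper instead avoids any case analysis by degenerate type. On open sets where two pairs of opposite sides contain a nondegenerate triangle, it factors the form map through the four normals of those sides, and it checks by explicit maximal minors that the resulting map $\mathcal N\subseteq(\P^1)^4\to\P^2$ has rank $3$. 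The problem is that your parabolic step is aimed at the wrong points.

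\textbf{The parabolic locus.} It is exactly the set of triple-side quadrangles, not the set of degenerate ones. For two distinct pairs of opposite sides, $\det M$ is a nonzero constant times the product of the four determinants formed by normals of adjacent selected sides. Such a factor vanishes only when two adjacent sides coincide, and that forces a triple side. This is the computation in the proof of Theorem~\ref{TFAE}, which does not use the present theorem. Hence double-side and quadruple-side quadrangles have $\det M\ne0$; for quadruple-side ones the factors pair off to give $\det M<0$. These points are already covered by your open-orbit argument, so the case you flagged as the main obstacle needs nothing.

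\textbf{The missed case.} Triple-side quadrangles with a double vertex on the triple side are parabolic, and your plan does not reach them. In your normalization $v_1=(0,0)$, $v_2=(1,0)$, $v_3=(0,1)$, $v_4=(x,y)$, you do get $\det M=4xy(1-x-y)$. But such a quadrangle has, say, $v_4=v_1$. Then $\{L_{12},L_{34}\}=\{L_{13},L_{24}\}$, so $J(F,G)\equiv0$ and the formula does not compute the form there at all. Whether the point is parabolic is decided by the free normal of $L_{14}$, which vertex coordinates do not see. Moreover, a first-order motion of $v_4$ alone must run along $L_{14}$, which here is the triple side, so it stays in the triple-side locus.

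\textbf{The fix.} The transverse direction needed at these points is exactly the free-normal rotation you reserved for types (a) and (c). For example, take $v_1=v_2=(0,0)$, $v_3=(1,0)$, $v_4=(0,1)$, and let $L_{12}$ be $y=\epsilon x$. The pairs $\{L_{12},L_{34}\}$ and $\{L_{13},L_{24}\}$ give $J(F,G)_2=-2\epsilon X^2-2Y^2$, so $\det M=4\epsilon$, which is transverse at $\epsilon=0$. With this reassignment, your argument goes through: move $v_4$ off the triple side when the four vertices are distinct, and rotate the free normal at a double vertex.
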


\begin{proof}
For adjacent sides $L_{i_1j_1}$ and $L_{i_2j_2}$, let $U_{i_1j_1,i_2j_2}$ be the open subset of $\Q$ consisting of quadrangles for which three of the four sides ${L_{i_1j_1}},$ ${L_{i_1'j_1'}},$ ${L_{i_2j_2}},$ ${L_{i_2'j_2'}}$ form a {nondegenerate} triangle, where $$\{i_1',j_1'\}=\{1,2,3,4\}\setminus\{i_1,j_1\},\:\: \{i_2',j_2'\}=\{1,2,3,4\}\setminus\{i_2,j_2\}.$$
To see that these open sets cover $\Q$,  choose three sides of a quadrangle that form a {nondegenerate} triangle. If two of these three sides are opposite sides of the quadrangle, then those two sides form one pair of opposite sides, and the third side belongs to another. Thus the quadrangle belongs to one of the open sets above. Otherwise, after relabeling, the three sides are ${L_{12},L_{13},L_{23}}$. Set
$$L_1={L_{23}},\:\: L_2={L_{13}},\:\:  L_3={L_{12}},\:\: L_1'={L_{14}},\:\:  L_2'={L_{24}},\:\:  L_3'={L_{34}}.$$
 For each $i$, the line $L_i'$ passes through $v_i$, which is the vertex of the triangle opposite $L_i$, and through $v_4$. If $L_i'$ is not parallel to $L_i$, then $L_i,L_i'$, together with one of the two sides of the original triangle through the vertex opposite $L_i$ and distinct from $L_i'$, form a nondegenerate triangle contained in the union of two pairs of  opposite sides. If no union of two pairs of opposite sides contained a {nondegenerate} triangle, then each $L_i'$ would be parallel to $L_i$. But the three lines through the vertices of a {nondegenerate} triangle parallel to the opposite sides are not concurrent, whereas $L_1',L_2',L_3'$ all pass through $v_4$. This proves the specified open sets cover $\Q$.

It remains to prove the theorem on one such open set $U=U_{i_1j_1,i_2j_2}$. For this fixed open set, write $A=L_{i_1j_1}$, $A'=L_{i_1'j_1'}$, $B=L_{i_2j_2}$, and $B'=L_{i_2'j_2'}$. Let $\mathcal N\subseteq(\mathbb{RP}^1)^4$ be the open set of four-tuples with at least three distinct entries, viewed as normals $[t:u]$. By the definition of $U$, the four selected normals determine a point of $\mathcal N$. Define
$$\nu:U\longrightarrow\mathcal N: Q\longmapsto \big(n_A(Q),n_{A'}(Q),n_B(Q),n_{B'}(Q)\big),$$
where $n_L(Q)$ denotes the normal of the side $L$. We first show that $\nu$ is a Nash submersion. Fix $Q_0\in U$. It suffices to construct a Nash local right inverse of $\nu$ through $Q_0$, since differentiating $\nu\circ\sigma=\operatorname{id}$ shows that $d\nu_{Q_0}$ is surjective. Since $Q_0\in U$, three of the four sides $A,A',B,B'$ form a {nondegenerate} triangle. Among these three sides there is one side from $\{A,A'\}$ and one side from $\{B,B'\}$, and these two sides are not parallel. After possibly interchanging $A$ with $A'$ and $B$ with $B'$, and then relabeling the vertices, we may assume $A=L_{12},$ $A'=L_{34},$ $B=L_{13},$ $B'=L_{24},$ with $A'$ and $B'$ nonparallel at $Q_0$.

Choose affine coordinate neighborhoods $V_A, V_{A'}, V_B, V_{B'}\subseteq\mathbb{RP}^1$ of the four normals $n_A(Q_0),$ $n_{A'}(Q_0),$ $n_B(Q_0),$ $n_{B'}(Q_0),$ and set
$$V=V_A\times V_{A'}\times V_B\times V_{B'}.$$
On these affine neighborhoods, choose Nash maps $e_L:V_L\to\mathbb R^2\setminus\{0\}$, for $L=A,A',B,B'$, such that each $e_L(\xi)$ is perpendicular to the normal $\xi$. Denote the $i$th vertex of the  quadrangle $Q_0$ by $v_i(Q_0)$, and choose real numbers $r,s$ such that
$$v_2(Q_0)=v_1(Q_0)+r e_A(n_A(Q_0)),\:\:\: v_3(Q_0)=v_1(Q_0)+s e_B(n_B(Q_0)).$$
For $\eta=(\eta_A,\eta_{A'},\eta_B,\eta_{B'})\in V$, define
$$v_1(\eta)=v_1(Q_0),\:\:\: v_2(\eta)=v_1(Q_0)+r e_A(\eta_A),\:\:\: v_3(\eta)=v_1(Q_0)+s e_B(\eta_B).$$
Because the sides $A'$ and $B'$ of $Q_0$ are nonparallel, there is a point $v_4(\eta)$, for $\eta$ near $\nu(Q_0)$, that is the intersection of the line through $v_3(\eta)$ with normal $\eta_{A'}$ and the line through $v_2(\eta)$ with normal $\eta_{B'}$. This intersection is obtained by solving a nonsingular $2\times2$ linear system, and hence varies Nashly with $\eta$.

Set $n_{12}(\eta)=\eta_A,$ $n_{34}(\eta)=\eta_{A'},$ $n_{13}(\eta)=\eta_B,$ $n_{24}(\eta)=\eta_{B'}.$ The definitions of the vertices $v_i(\eta)$ ensure that these normals satisfy the required incidence conditions for a quadrangle. At $\eta=\nu(Q_0)$, {we have by construction} $v_4(\eta)=v_4(Q_0)$. Moreover, $v_1(Q_0)\ne v_4(Q_0)$ and $v_2(Q_0)\ne v_3(Q_0)$, since if either equality held, all four sides $A,A',B,B'$ would be concurrent and hence could not contain a {nondegenerate} triangle. At $\eta=\nu(Q_0)$, the normal directions $\eta_{A'}$ and $\eta_{B'}$ are distinct, the endpoint pairs $(v_1(\eta),v_4(\eta))$ and $(v_2(\eta),v_3(\eta))$ consist of distinct points, and three of the four selected sides form a nondegenerate triangle. These properties are open, so since $\mathcal N$ is open and contains $\nu(Q_0)$, after replacing  $V$ with a possibly smaller open set we may assume that $V\subseteq\mathcal N$ and that all these properties hold for every $\eta\in V$.

For such $\eta$, define $\sigma(\eta)$ to be the vertex configuration with vertices $v_i(\eta)$, the four prescribed side normals above, and with the two remaining side normals $n_{14}(\eta)$, the normal of the line through $v_1(\eta)$ and $v_4(\eta)$, and $n_{23}(\eta)$, the normal of the line through $v_2(\eta)$ and $v_3(\eta)$. These normals vary Nashly with $\eta$ because $v_1(\eta)\ne v_4(\eta)$ and $v_2(\eta)\ne v_3(\eta)$ for every $\eta\in V$.  Thus the preceding definitions of the vertices and side normals define a Nash map $\sigma:V\to U$. By construction, $\sigma(\nu(Q_0))=Q_0$ and $\nu\circ\sigma=\operatorname{id}_V.$ Thus $\sigma$ is a Nash local section of $\nu$ through $Q_0$. Since $Q_0$ was arbitrary, $\nu$ is a Nash submersion.

For $\eta=(\eta_A,\eta_{A'},\eta_B,\eta_{B'})\in\mathcal N$, choose homogeneous linear forms $h_A,h_{A'},h_B,h_{B'}$ whose coefficient vectors represent the corresponding entries of $\eta$. Set $F=h_Ah_{A'}$ and $G=h_Bh_{B'}$.  Define $\Phi(\eta)$ to be the projective class of $J(F,G)$, viewed as a point of $\P^2$ via the basis $X^2,-2XY,Y^2$. We claim that since at least three of the four normals are distinct, $J(F,G)\ne0$. {By the coefficient calculation in the proof of Proposition~\ref{form coefficients}, $J(F,G)=0$ would force $F$ and $G$ to be proportional.} Unique factorization then shows that the two unordered normal pairs coincide, including multiplicities{, contradicting the fact that there are at least three distinct normals}. The chosen forms are homogeneous, so $J(F,G)$ is homogeneous of degree $2$.  If each representative $h_L$ is replaced by $\rho_Lh_L$, where $\rho_L\in\R^\times$, then
$$J(F,G)\longmapsto \rho_A\rho_{A'}\rho_B\rho_{B'}J(F,G).$$
Hence the projective class of $J(F,G)$ is independent of the chosen representatives, and $\Phi$ is well-defined. 

To see that $\Phi$ is Nash, cover $\mathcal N$ by its intersections with products of the two standard affine charts of $\mathbb{RP}^1$. On each such product chart, write $\eta_L=[a_L:b_L]$ for $L=A,A',B,B'$. If the chosen chart in the $L$-component is $b_L\ne0$, set $t_L=a_L/b_L$ and take $h_L=t_LX+Y$; if it is $a_L\ne0$, set $s_L=b_L/a_L$ and take $h_L=X+s_LY$. The three coefficients of $J(h_Ah_{A'},h_Bh_{B'})$ are polynomial functions of the four affine chart coordinates and, as shown above, do not vanish simultaneously on $\mathcal N$. Thus they define a polynomial map into $\R^3\setminus\{0\}$. On each such chart, $\Phi$ is the composition of this map with the Nash projectivization map $\pi:\R^3\setminus\{0\}\longrightarrow\P^2.$ Hence $\Phi$ is Nash.

 For a quadrangle, write an   equation for each selected side $L$ as $\ell_L=h_L+d_L$, where $h_L$ is its homogeneous linear part. Then
$$J(\ell_A\ell_{A'},\ell_B\ell_{B'})_2 =J(h_Ah_{A'},h_Bh_{B'}),$$
because only the degree-one parts of the two gradients contribute to the homogeneous degree-two part of the Jacobian determinant. Therefore, on $U$, the form map is the composition $U\stackrel{\nu}{\rightarrow}\mathcal N \stackrel{\Phi}{\rightarrow}\P^2.$

It remains to show that $\Phi$ is a Nash submersion. Fix $\eta_0\in\mathcal N$. Choose homogeneous coordinates on $\mathbb{RP}^1$ for which the four entries of $\eta_0$ lie in the affine chart where the second coordinate is nonzero, and restrict to a neighborhood $W\subseteq\mathcal N$ of $\eta_0$ on which this remains true. After rescaling, write $h_L=t_LX+Y$ for $L=A,A',B,B'$. This coordinate choice amounts to an invertible linear change of $X,Y$, which induces a projective automorphism of $\P^2$, so since composition with a projective automorphism does not change differential rank, we may compute the rank of the form map in these coordinates. A direct expansion of $J(F,G)_2$ now defines a map
$$\widetilde{\Phi}:W\longrightarrow\mathbb R^3\setminus\{0\}:\eta\longmapsto \bigl(a(\eta),b(\eta),c(\eta)\bigr)$$
such that $\Phi(\eta)=[a(\eta):b(\eta):c(\eta)].$ More explicitly,
\begin{equation}\label{affine form coefficients}
\widetilde{\Phi}(\eta)=
\big(t_A t_{A'}(t_B+t_{B'})-t_Bt_{B'}(t_A+t_{A'}),
\ t_Bt_{B'}-t_At_{A'},
\ t_A+t_{A'}-t_B-t_{B'}\big).
\end{equation}
The Jacobian of $\widetilde{\Phi}$ with respect to $t_A,t_{A'},t_B,t_{B'}$ is
\begingroup
\setlength{\arraycolsep}{3.5pt}
$$\begin{pmatrix} t_{A'}(t_B{+}t_{B'})-t_B t_{B'} & t_A(t_B{+}t_{B'})-t_B t_{B'} & t_A t_{A'}-t_{B'}(t_A{+}t_{A'}) & t_A t_{A'}-t_B(t_A{+}t_{A'})\\ -t_{A'} &  -t_A & t_{B'} & t_B\\ 1 & 1 & -1 & -1 \end{pmatrix}.$$
\endgroup
The four maximal minors, obtained by deleting respectively the $t_A,t_{A'},t_B,t_{B'}$ columns, are, up to sign,
$$\begin{aligned} &(t_A-t_B)(t_A-t_{B'})(t_B-t_{B'}), &\: &(t_{A'}-t_B)(t_{A'}-t_{B'})(t_B-t_{B'}),\\ &(t_{A'}-t_A)(t_A-t_B)(t_{A'}-t_B), &\: &(t_{A'}-t_A)(t_A-t_{B'})(t_{A'}-t_{B'}). \end{aligned}$$
Since at least three of the four normals are distinct, one of these products is nonzero. Hence this Jacobian matrix has rank $3$. Since $\pi$ is a Nash submersion and $\Phi|_W=\pi\circ\widetilde{\Phi}$, the surjectivity of $d\widetilde{\Phi}_{\eta_0}$ implies that $d\Phi_{\eta_0}$ is surjective. Since $\eta_0$ was arbitrary, $\Phi$ is a Nash submersion. Therefore, the form map $\Phi\circ\nu$ is a Nash submersion on $U$. Since the open sets $U_{i_1j_1,i_2j_2}$ cover $\Q$, the form map is a Nash submersion on all of $\Q$.
\end{proof}

\begin{remark} \label{form remark}
The behavior of the form map with respect to affine transformations can be deduced from the Jacobian construction in Definition~\ref{form def}. Let $F$ and $G$ be line-pair polynomials for two distinct pairs of opposite sides of $Q$. If $g(x)=Sx+p$ is an invertible affine transformation, put $F^g=F\circ g^{-1}$ and $ G^g=G\circ g^{-1}$. {By the chain rule,}
$$J(F^g,G^g)=\det(S)^{-1}J(F,G)\circ g^{-1}.$$
Write $[M(Q)]$ for the projective class of $M(a,b,c)$, where $[a:b:c]=\operatorname{form}(Q)$. Taking homogeneous degree-two parts, translations contribute only lower-degree terms and the scalar $\det(S)^{-1}$ does not change the projective class. Therefore, by~(\ref{jacobian quadratic part}),
$$[M(g\cdot Q)]=[S^{-T}M(Q)S^{-1}].$$
Equivalently, representatives may be chosen so that
$$M(g\cdot Q)=S^{-T}M(Q)S^{-1}.$$
In particular, the form is invariant under translations and uniform scalings. If $S$ is orthogonal, representatives may be chosen so that $M(g\cdot Q)=SM(Q)S^T$.
\end{remark}

\section{Aspect and sphericity of a quadrangle}

We introduce next a function on the manifold of quadrangles that measures what we call the aspect of a quadrangle and  is invariant under   similarity. This invariant is used  in the next section to describe  the moduli space $\Q/G$, when $G$ is a Nash subgroup of $\operatorname{Sim}(\R^2)$.

\begin{definition}
{Let $Q$ be a quadrangle with quadrangle form $[a:b:c]$, and let $\lambda_1$ and $\lambda_2$ be the eigenvalues of $M(a,b,c)$. The {\it (signed) aspect} of $Q$ is
\begin{eqnarray}\label{asp eq}
\alpha(Q)
\:\:=\:\:
\sgn(\lambda_1\lambda_2)\,\frac{\min\{|\lambda_1|,|\lambda_2|\}}
{\max\{|\lambda_1|,|\lambda_2|\}}
\:\:\in\:\:[-1,1].
\end{eqnarray}}
\end{definition}

{The definition is independent of the choice of representative of the projective class $[a:b:c]$. Since the eigenvalues of $M(a,b,c)$ are
$$\lambda_1=a+c-\sqrt{(a-c)^2+4b^2} \: {\text{ and }} \: \lambda_2=a+c+\sqrt{(a-c)^2+4b^2},$$
the aspect can also be expressed as
$$\alpha(Q) \:\:=\:\: \frac{|a+c|-\sqrt{(a-c)^2+4b^2}} {|a+c|+\sqrt{(a-c)^2+4b^2}}.$$
The denominator is nonzero since not all of $a,b,c$ are zero.}

{Thus the aspect is positive, negative, or zero according as the associated matrix $M(a,b,c)$ is definite, indefinite, or singular. Its absolute value is the ratio between the absolute values of the two eigenvalues of this matrix. When the center locus is an ellipse, its minor-to-major semiaxis ratio is $\sqrt{\alpha(Q)}$.}

\begin{lemma} \label{asp inv} The aspect of a quadrangle is invariant under the group of  similarities of the plane.
\end{lemma}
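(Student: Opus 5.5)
The plan is to reduce the statement to the transformation rule for the form recorded in Remark~\ref{form remark}. Let $g\in\operatorname{Sim}(\R^2)$ be written $g(x)=Sx+p$ with $S=\lambda B$, where $\lambda>0$ and $B\in O(2)$. By Remark~\ref{form remark}, representatives of the forms of $Q$ and $g\cdot Q$ can be chosen so that
$$M(g\cdot Q)=S^{-T}M(Q)S^{-1}=\lambda^{-2}\,B\,M(Q)\,B^{T},$$
using $B^{-T}=B$ and $B^{-1}=B^{T}$ for orthogonal $B$. Any representative $(a,b,c)$ of a projective class yields a matrix $M(a,b,c)$ determined up to a nonzero scalar multiple, and the same holds for the matrix $M$ computed from $g\cdot Q$.

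Next we show that the aspect is unchanged when $M$ is replaced by $\mu\,PMP^{T}$ with $\mu\in\R^\times$ and $P$ orthogonal. This already appears in the remark following the definition of aspect for scalars, but I will state it explicitly. Conjugation by an orthogonal matrix leaves the eigenvalues unchanged. Multiplying by $\mu$ sends $\lambda_1,\lambda_2$ to $\mu\lambda_1,\mu\lambda_2$, which preserves $\sgn(\lambda_1\lambda_2)$, because $\mu^2>0$. It also preserves the ratio $\min\{|\lambda_i|\}/\max\{|\lambda_i|\}$, because both absolute values are multiplied by $|\mu|$. Hence formula~(\ref{asp eq}) gives the same value. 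Since the eigenvalues of a real symmetric matrix are real, no sign ambiguity arises.

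Combining the two steps gives $\alpha(g\cdot Q)=\alpha(Q)$. The only point needing care is that Remark~\ref{form remark} holds only projectively, and that the scalar $\det(S)^{-1}$ may be negative when $g$ reverses orientation. Both issues are absorbed by the invariance of the aspect under scaling by any nonzero real $\mu$, including negative ones, so I expect no real obstacle.
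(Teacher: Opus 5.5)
Your proof is correct and takes essentially the paper's route. The paper also invokes Remark~\ref{form remark}, sets aside translations and uniform scalings as leaving the form unchanged, and then uses that orthogonal conjugation $M\mapsto RMR^T$ preserves eigenvalues; you treat $S=\lambda B$ in a single step and make explicit that the aspect is unchanged by any nonzero scalar multiple, including a possibly negative $\det(S)^{-1}$.
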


\begin{proof} By Remark~\ref{form remark}, the form of $Q$ is unaffected by  translations and uniform scaling, so we need only consider the effect of an orthogonal transformation $R$, which we view as a matrix. Let $[a:b:c]$ be the form of $Q$. Then by Remark~\ref{form remark} the form $[a':b':c']$ of the image of $Q$ under  $R$ is given by  $$M(a',b',c') = RM(a,b,c)R^T.$$  Since the eigenvalues of this matrix are the same as those of $M(a,b,c)$, the lemma follows from (\ref{asp eq}).\end{proof}

\begin{lemma}\label{mu theorem}
{After an orientation-preserving isometry of the plane, every quadrangle $Q$ has form $[\alpha(Q):0:1].$ Conversely, if $Q$ is similar to a quadrangle with form $[\alpha:0:1]$ for some $\alpha\in[-1,1]$, then $\alpha=\alpha(Q)$.}
\end{lemma}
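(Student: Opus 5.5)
The plan is to diagonalize the symmetric matrix $M(Q)$ by a rotation, using the transformation rule of Remark~\ref{form remark}, and then read off the aspect from the diagonal form. For the converse we will use the similarity invariance of aspect from Lemma~\ref{asp inv}.

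\emph{First statement.} Let $[a:b:c]=\operatorname{form}(Q)$ and $M=M(a,b,c)$. By the spectral theorem there is a rotation $R\in SO(2)$ such that $RMR^T$ is diagonal. If an orthogonal diagonalizing matrix has determinant $-1$, we compose it with a coordinate swap, or with a reflection in an axis; either way the result is still diagonal, so a rotation suffices. By Remark~\ref{form remark}, $M(R\cdot Q)$ can be taken to be $RMR^T=\operatorname{diag}(\lambda_1,\lambda_2)$, where the $\lambda_i$ are the eigenvalues of $M$. Composing further with the rotation by $\pi/2$ swaps the two diagonal entries. We may therefore arrange $|\lambda_1|\le|\lambda_2|$. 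Note that $\lambda_2\neq0$, since $M\neq0$.

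Since $M(a',0,c')=\operatorname{diag}(2a',2c')$, the form of the rotated quadrangle is $[\lambda_1:0:\lambda_2]$. Dividing by $\lambda_2$ gives $[\lambda_1/\lambda_2:0:1]$. The sign of $\lambda_1/\lambda_2$ is $\sgn(\lambda_1\lambda_2)$, and its absolute value is $\min|\lambda_i|/\max|\lambda_i|$. By~(\ref{asp eq}) this ratio is exactly $\alpha(Q)$, because the eigenvalues are unchanged under conjugation by $R$.

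\emph{Converse.} Suppose $Q$ is similar to $Q'$ with $\operatorname{form}(Q')=[\alpha:0:1]$ and $\alpha\in[-1,1]$. Then $M(\alpha,0,1)=\operatorname{diag}(2\alpha,2)$, whose eigenvalues are $2\alpha$ and $2$. Since $|\alpha|\le1$, formula~(\ref{asp eq}) gives $\alpha(Q')=\sgn(\alpha)\,|\alpha|=\alpha$. The case $\alpha=0$ is consistent, since both sides are $0$. Lemma~\ref{asp inv} then gives $\alpha(Q)=\alpha(Q')=\alpha$.

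The only delicate point is the choice of sign and ordering. We must make sure the diagonalizing map can be taken orientation-preserving, and that the eigenvalue of smaller modulus can be put in the $a$-slot. Both are handled by composing with the rotation by $\pi/2$, which swaps the diagonal entries and has determinant $1$. Everything else follows directly from Remark~\ref{form remark} and~(\ref{asp eq}).
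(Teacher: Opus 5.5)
Your proof is correct and follows the paper's argument. You diagonalize $M(Q)$ by a rotation using Remark~\ref{form remark}, apply a quarter turn to put the smaller-modulus eigenvalue in the first slot, rescale, and for the converse compute the aspect of a quadrangle with form $[\alpha:0:1]$ from its eigenvalues $2\alpha,2$ via (\ref{asp eq}) before invoking Lemma~\ref{asp inv}; the paper instead uses the equivalent closed-form square-root expression for aspect, but otherwise the routes coincide.
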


\begin{proof}
Let $[a:b:c]$ be the form of $Q$, and let $\lambda_1,\lambda_2$ be the eigenvalues of $M(a,b,c)$. By the principal axis theorem, an orientation-preserving orthogonal change of coordinates diagonalizes this matrix. If necessary, a quarter turn interchanges the two eigendirections. After rescaling the resulting representative of the form, equation~(\ref{asp eq}) shows that the diagonal form is $[{\alpha}(Q):0:1]$. Conversely, if a quadrangle $Q'$ has form $[\alpha:0:1]$ with $-1\leq\alpha\leq1$, then
$${\alpha}(Q') =\frac{|1+\alpha|-|1-\alpha|}{|1+\alpha|+|1-\alpha|} =\alpha.$$
Aspect is invariant under similarities by Lemma~\ref{asp inv}, so the same holds for $Q$.
\end{proof}

\begin{theorem}\label{convex}\label{TFAE}
Let $Q$ be a quadrangle with aspect $\alpha={\alpha}(Q)$ and form $[a:b:c]$.
\begin{itemize}
\item[$(1)$] $\alpha>0 \Longleftrightarrow$ the {center }locus of $Q$ is an ellipse.
\item[$(2)$] $\alpha<0 \Longleftrightarrow$ the {center }locus of $Q$ is a hyperbola.
\item[$(3)$] $\alpha=0 \Longleftrightarrow$ the {center }locus of $Q$ is a pair of distinct parallel lines $\Longleftrightarrow$ $Q$ is a triple-side quadrangle.
\item[$(4)$] $\alpha=1 \Longleftrightarrow \operatorname{form}(Q)=[1:0:1]$.
\item[$(5)$] $\alpha=-1 \Longleftrightarrow \operatorname{form}(Q)=[a:b:-a]$ for some $a,b\in\R$, not both zero.
\end{itemize}
\end{theorem}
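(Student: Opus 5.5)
The plan is to rotate into the normal form of Lemma~\ref{mu theorem} and read off everything from the quadratic part of $J(F,G)$ together with a little information about its real points. By Lemma~\ref{asp inv} and Lemma~\ref{mu theorem}, after an orientation-preserving isometry (which moves the center locus by that isometry and so preserves its type) we may assume $\operatorname{form}(Q)=[\alpha:0:1]$. Then
\[
J(F,G)=\kappa\,(2\alpha X^2+2Y^2)+(\text{terms of degree}\le 1),\qquad \kappa\neq0 .
\]
In general, the sign of $\alpha$ is the sign of $\det M(a,b,c)=4(ac-b^2)$.

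Items (4) and (5) come directly from the closed formula for $\alpha(Q)$. We have $\alpha=1$ iff $\sqrt{(a-c)^2+4b^2}=0$, i.e.\ $a=c$ and $b=0$, i.e.\ the form is $[1:0:1]$. We have $\alpha=-1$ iff $|a+c|=0$, i.e.\ the form is $[a:b:-a]$.

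For (2), the definition of a hyperbola only concerns the intersection of the projective realization with the line at infinity. That intersection is cut out by $J(F,G)_2$, and it consists of two distinct real points iff $M$ is indefinite, i.e.\ iff $\alpha<0$.

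For (1), if $\alpha>0$ the quadratic part is definite. The conic is then an ellipse, a single point, or empty, so it suffices to exhibit two distinct real points of the center locus. For these I will use side midpoints. Fix a side $L_{ij}$, let $\{k,l\}$ be the complementary pair, and put $m=(v_i+v_j)/2$, $F=\ell_{ij}\ell_{kl}$, $G=\ell_{ik}\ell_{jl}$. Since $\ell_{ij}(m)=0$, we have $\nabla F(m)=\ell_{kl}(m)\nabla\ell_{ij}$. Moreover $\ell_{ik}(m)=\tfrac12\ell_{ik}(v_j)$ and $\ell_{jl}(m)=\tfrac12\ell_{jl}(v_i)$. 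Expanding $J(F,G)(m)$ in these terms should give zero. I will check the identity for nondegenerate $Q$, where it is the classical nine-point property, and pass to all of $\Q$ by continuity, since the condition is closed and the nondegenerate quadrangles are dense. The six midpoints cannot all coincide, for otherwise $v_1+v_2=v_1+v_3$ would force coincident vertices everywhere. This yields two distinct real points, so the conic is an ellipse. The converse directions of (1) and (2) follow from the quadratic-part criterion, since an ellipse has definite quadratic part and a hyperbola has indefinite quadratic part.

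The main obstacle is (3). From (1) and (2), $\alpha=0$ iff $M$ is singular, so the conic is of parabolic type: a parabola, two parallel lines, a double line, or empty. I will compute $b^2-ac$ from~(\ref{affine form coefficients}) and expect it to factor, up to a constant, as
\[
(t_A-t_B)(t_A-t_{B'})(t_{A'}-t_B)(t_{A'}-t_{B'}).
\]
This would show that $\alpha=0$ iff some side of one opposite pair is parallel to some side of the other. Such sides are adjacent, so being parallel they coincide, and three vertices are collinear. Using Proposition~\ref{degenerate classification}, I will check that in cases~(a) and~(c), and in the nondegenerate case, no two adjacent sides coincide across the two pairs, whereas in case~(b) they do. Hence $\alpha=0$ iff $Q$ is triple-side.

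It remains to show that for a triple-side $Q$ the center locus is two distinct parallel lines. Write $F=\ell_Tg_1$ and $G=\ell_Tg_2$, where $g_1,g_2$ are linear forms vanishing at the vertex $w\notin T$. Expanding $J$ by the product rule gives $J(F,G)=\ell_T\cdot h$ with $h$ a linear polynomial. Because $J_2\neq0$, the linear part of $h$ is nonzero, and because $J_2$ is then a perfect square up to scalar, the line $h=0$ is parallel to $T$. To see that $h=0$ differs from $T$, I will show that a suitable midpoint off $T$ lies on the locus. Finally, the implication from "a pair of distinct parallel lines" back to $\alpha=0$ is immediate, since such a conic has a rank-one quadratic part.
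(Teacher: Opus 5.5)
Your proposal is correct and follows the paper's proof in its essentials. Singularity of $M$ is detected by the four adjacent-side factors; your $b^2-ac=(t_A-t_B)(t_A-t_{B'})(t_{A'}-t_B)(t_{A'}-t_{B'})$ is exactly $-\det M/4$ in the paper's formula. Parts (1)--(2) come from the definiteness of $J(F,G)_2$ together with two distinct real points on the locus, and (4)--(5) come from the closed formula for $\alpha$.

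The differences are local. Where the paper uses the centers of the two selected line-pair conics and the explicit computation $J(F,G)=(m_2-m_1)Y(1-2Y)$, you use side midpoints. Your midpoint identity in fact holds directly for every quadrangle, since $\nabla G(m)\cdot(v_j-v_i)=\tfrac12\bigl(\ell_{jl}(v_i)\ell_{ik}(v_j)-\ell_{ik}(v_j)\ell_{jl}(v_i)\bigr)=0$, and $\nabla G(m)=0$ if $v_i=v_j$. So once you order $k,l$ so that the two chosen pairs are distinct, no density argument is needed.
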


The possibilities in parts (1)--(3) are illustrated in Figure~\ref{loci figures}.
\begin{figure}[ht]
\begin{center}
\includegraphics[width=\textwidth,scale=.03]{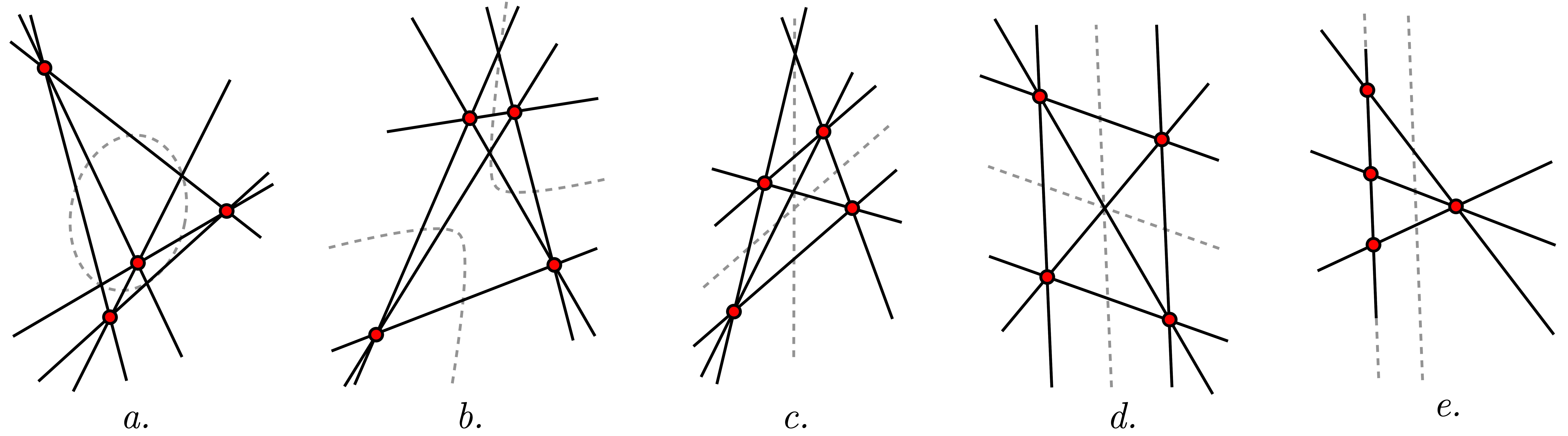}
\end{center}
\caption{The dashed curves are the {center }loci of the quadrangles shown. (a) If $\alpha(Q)>0$, the {center }locus is an ellipse. (b)--(d) If $\alpha(Q)<0$, the {center }locus is a hyperbola; in (c) and (d), it is a degenerate hyperbola. (e) If $\alpha(Q)=0$, the {center }locus is a pair of distinct parallel lines and $Q$ is a triple-side quadrangle.}
\label{loci figures}
\end{figure}

\begin{proof}
Choose two pairs of opposite sides $\{A,A'\}$ and $\{B,B'\}$ whose union contains a {nondegenerate} triangle, which the proof of Theorem~\ref{form submersion} shows is always possible. For each of these four sides $L$, choose an equation whose homogeneous part is $t_LX+u_LY$. For these choices, let $F$ and $G$ be the resulting line-pair polynomials, and write $J(F,G)_2=2aX^2-4bXY+2cY^2.$ Then $(a,b,c)$ represents $\operatorname{form}(Q)$. A direct calculation {shows that}
$$\begin{aligned} \det M(a,b,c)=-4& (t_Au_B-t_Bu_A)(t_Bu_{A'}-t_{A'}u_B)\\ &\mathrel{}\cdot(t_{A'}u_{B'}-t_{B'}u_{A'}) (t_{B'}u_A-t_Au_{B'}). \end{aligned}$$
Each of these factors involves the normals of two adjacent sides among $A,B,A',B'$. Since adjacent sides meet, a factor vanishes if and only if the corresponding sides coincide. Suppose, for example, that $A=L_{12}=L_{13}=B$. If $v_2=v_3$, then $v_4\ne v_2$, since otherwise all six sides would be concurrent. Hence $L_{34}=L_{24}$, so the two selected pairs of opposite sides are equal, contrary to their choice. Hence $v_2\ne v_3$, and the side $L_{23}$ is also the line $A=B$. Thus three adjacent sides coincide and $Q$ is a triple-side quadrangle. The other three factors are handled in the same way. Conversely, the two products associated with any two distinct pairs of opposite sides of a triple-side quadrangle share the triple side, so one of the four factors in the determinant vanishes. Thus $M(a,b,c)$ is singular if and only if $Q$ is a triple-side quadrangle.

Suppose that $Q$ is a triple-side quadrangle. After a   change of coordinates, its triple side is $Y=0$, its remaining vertex is $(0,1)$, and line-pair polynomials for two distinct pairs of opposite sides can be written, after rescaling, as
$$F=Y\bigl(X-m_1(Y-1)\bigr) \: {\mbox{ and }} \: G=Y\bigl(X-m_2(Y-1)\bigr),$$
where $m_1\ne m_2$. Direct calculation {shows that}
$$J(F,G)=(m_2-m_1)Y(1-2Y),$$
so the {center }locus of $Q$ is the pair of distinct parallel lines $Y=0$ and $Y=1/2$. Conversely, the homogeneous degree-two part of an equation for two parallel lines is the square of a linear form and hence has singular matrix. Since $\alpha=0$ exactly when $M(a,b,c)$ is singular, this proves (3).

It remains to consider the nonsingular case. The points at infinity of the projective realization of the {center-}locus conic {are the 
zeros} in $\P^1$ of $$(X,Y)M(a,b,c)(X,Y)^T.$$ This quadratic form has no real projective zeros when $M(a,b,c)$ is definite and two distinct real projective zeros when it is indefinite. By the standard affine classification of real conics, the {center }locus is, in the definition case, an ellipse a single point, or empty, while in the indefinite case it is a hyperbola.

The cases of a single-point or empty {center }locus do not occur because by Proposition~\ref{form coefficients}, every center of either selected line-pair conic belongs to the {center }locus. If either selected pair {consists of two distinct parallel lines or is a double line}, it has a whole line of centers, {namely the midline in the first case and the common line in the second}. Otherwise, 
each selected pair consists of two nonparallel lines and has their intersection as its center. These two intersection points are distinct, since otherwise all four selected sides would be concurrent, contrary to the choice that their union contains a {nondegenerate} triangle. Thus the {center }locus contains more than one point. Since the sign of $\alpha$ is the sign of the product of the two eigenvalues of $M(a,b,c)$, this proves (1) and~(2).

Finally, $\alpha=1$ if and only if $(a-c)^2+4b^2=0$; if and only if $a=c$ and $b=0$, which proves (4). Also, $\alpha=-1 $ if and only if  the eigenvalues of $M(a,b,c)$ sum to zero; if and only if $a+c=0$. This proves (5).
\end{proof}

{The next invariant is a   reparametrization of aspect that unlike aspect  remains Nash on all of $\Q$.}

\begin{definition}
{Let $Q$ be a quadrangle with form $[a:b:c]$, and set $M=M(a,b,c)$. The {\it matrix sphericity} of $Q$ is
$$\sph(Q) \:\:=\:\: \frac{(\tr(M))^2}{2\tr(M^2)}.$$}
\end{definition}

{If $\lambda_1$ and $\lambda_2$ are the eigenvalues of $M$, {then, since $M$ is symmetric, we have} the equivalent expressions
\begin{equation}\label{sphericity formulas}
\sph(Q)
=\frac{(a+c)^2}{2(a^2+2b^2+c^2)}
=\frac{(\lambda_1+\lambda_2)^2}{2(\lambda_1^2+\lambda_2^2)}
=\frac{(1+\alpha(Q))^2}{2(1+\alpha(Q)^2)}.
\end{equation}
This value equals $1$ if and only if $M$ is scalar, i.e.,   $a=c$ and $b=0$.}

Note that $\sph(Q)$ is independent of the choice of  representative of the form of~$Q$. Also, sphericity takes values in $[0,1]$ since $$2(1+\alpha(Q)^2)-(1+\alpha(Q))^2=(1-\alpha(Q))^2.$$

\begin{remark} \label{sph calc} By equation~(\ref{sphericity formulas}), aspect is expressible in terms of  sphericity:  $${\alpha}(Q) \:\: =\:\:  \frac{\sqrt{\sph(Q)}- \sqrt{1-\sph(Q)}}{\sqrt{\sph(Q)}+ \sqrt{1-\sph(Q)} }.$$
\end{remark}

\begin{proposition} \label{sphericity}  Sphericity is a   Nash function on $\Quad$ that is invariant under   similarity. For any Nash subgroup $G$ of $\operatorname{Sim}(\R^2)$, sphericity descends to a   Nash function on $\Quad/G$.
\end{proposition}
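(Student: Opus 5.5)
The plan is to write sphericity as a composition of the form map with a Nash function on $\P^2$. On $\P^2$ define
$$s([a:b:c])=\frac{(a+c)^2}{2(a^2+2b^2+c^2)}.$$
This is well defined, since numerator and denominator are homogeneous of degree $2$. The denominator never vanishes on $\P^2$, because $a^2+2b^2+c^2=0$ forces $a=b=c=0$. On each standard affine chart of $\P^2$ (say $c\neq0$, with coordinates $a/c,b/c$), $s$ is a rational function with nowhere-vanishing denominator. It is therefore real analytic with semialgebraic graph, and so $s$ is Nash. By~(\ref{sphericity formulas}), $\sph=s\circ\operatorname{form}$. Theorem~\ref{form submersion} says that the form map $\Q\to\P^2$ is Nash, and compositions of Nash maps are Nash, so $\sph$ is a Nash function on $\Q$.

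Invariance under similarity follows from~(\ref{sphericity formulas}): $\sph(Q)=(1+\alpha(Q))^2/(2(1+\alpha(Q)^2))$, and $\alpha$ is similarity invariant by Lemma~\ref{asp inv}. One can also argue directly. By Remark~\ref{form remark}, translations and uniform scalings leave the form unchanged. An orthogonal $R$ replaces $M$ by $RMR^T$ up to a nonzero scalar, and $(\tr M)^2/\tr(M^2)$ is unchanged both by conjugation and by rescaling $M$.

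For the descent, let $G$ be a Nash subgroup of $\operatorname{Sim}(\R^2)$. Since $\sph$ is $G$-invariant, it is constant on the fibers of $\pi:\Q\to\Q/G$. By Theorem~\ref{main Nash}, $\pi$ is a surjective Nash submersion, so Lemma~\ref{Nash descent} shows that the induced function $\overline{\sph}:\Q/G\to\R$ is Nash.

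The only point needing care is confirming that $s$ is Nash on all of $\P^2$. This comes down to the nonvanishing of the denominator and to the chart-by-chart rational description. Everything else is a direct citation of earlier results.
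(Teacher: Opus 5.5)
Your proposal is correct and follows essentially the same route as the paper. Both write sphericity as the Nash function $(a+c)^2/(2(a^2+2b^2+c^2))$ on $\P^2$ composed with the Nash form map of Theorem~\ref{form submersion}, take invariance from Remark~\ref{form remark} (your alternative via Lemma~\ref{asp inv} is equally valid), and obtain the descent from Lemma~\ref{Nash descent} applied to the quotient map of Theorem~\ref{main Nash}.
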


\begin{proof}
The homogeneous expression for $\sph$ in equation~(\ref{sphericity formulas}) is well-defined on $\P^2$ and is Nash in each affine chart.  The function $\sph:\Quad \rightarrow \R$ is a Nash function on  $\Quad$ since the denominator does not vanish and the form map on $\Q$ is a Nash map by  Theorem~\ref{form submersion}.   Remark~\ref{form remark} implies that sphericity  is invariant under similarities since the eigenvalues of similar matrices are the same and eigenvalues scale uniformly under uniform scaling of the plane. Let $\pi:\Quad \rightarrow \Quad/G$ denote the quotient map, which  by Theorem~\ref{main Nash} is a Nash submersion whose fibers are the $G$-orbits. Since $\sph$ is $G$-invariant, there is a unique map $\sigma:\Quad/G\rightarrow\R$ such that $\sph=\sigma\circ\pi$. The quotient map $\pi$ is a surjective Nash submersion, and $\sph$ is Nash, so Lemma~\ref{Nash descent} shows that $\sigma$ is Nash. Thus sphericity descends to a Nash function on $\Quad/G$.
\end{proof}

We show next that sphericity is a submersion as long as the endpoints of its range $[0,1]$ are avoided. By Remark~\ref{sph calc}, these endpoints correspond to aspect values  $\pm 1$.

\begin{lemma} \label{sph submersion} The sphericity function   restricts to a Nash submersion on  the open Nash submanifold of $\Q$ consisting of the quadrangles  with aspect $\ne \pm 1$.
\end{lemma}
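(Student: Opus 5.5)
Write $\sph=s\circ\operatorname{form}$, where
$$s([a:b:c])=\frac{(a+c)^2}{2(a^2+2b^2+c^2)}$$
is the Nash function on $\P^2$ from Proposition~\ref{sphericity}. Since the form map is a Nash submersion by Theorem~\ref{form submersion}, the chain rule gives $d(\sph)_Q=ds_{\operatorname{form}(Q)}\circ d(\operatorname{form})_Q$, with $d(\operatorname{form})_Q$ surjective. So it suffices to show that $ds_p\neq 0$ at every $p\in\P^2$ of the form $\operatorname{form}(Q)$ with $\alpha(Q)\neq\pm1$.

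First I note that the set $U'$ of quadrangles with aspect $\neq\pm1$ is open and semialgebraic. By formula~(\ref{sphericity formulas}) and the identity $2(1+\alpha^2)-(1+\alpha)^2=(1-\alpha)^2$, we have $\sph=1$ exactly when $\alpha=1$. Likewise $\sph=0$ exactly when $\alpha=-1$. Thus $U'=\sph^{-1}((0,1))$, which is an open Nash submanifold, and $\sph|_{U'}$ is Nash.

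Next I compute $ds$ on the set of $p=[a:b:c]$ with $0<s(p)<1$. The key observation is that $s$ is invariant under the action of $SO(2)$ on forms by $M\mapsto RMR^T$ (Remark~\ref{form remark}). I will therefore work at a diagonal representative. By Lemma~\ref{mu theorem}, after a rotation, which induces a projective automorphism of $\P^2$ preserving $s$ and hence preserving whether $ds$ vanishes, we may take $p=[\alpha:0:1]$ with $\alpha\in(-1,1)$. In the affine chart $c=1$, consider the curve $a\mapsto[a:0:1]$ near $a=\alpha$. Along it,
$$s=\frac{(1+a)^2}{2(1+a^2)},$$
and its derivative in $a$ is
$$\frac{(1+a)(1-a)}{1+a^2}.$$
This is nonzero precisely when $a\neq\pm1$. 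Hence $ds_p\neq0$, and its image is all of $\R$ since the target is one-dimensional.

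Combining the steps, $d(\sph)_Q$ is surjective at every $Q\in U'$, so $\sph|_{U'}$ is a Nash submersion. Alternatively, I could exhibit a Nash local section, obtained by composing the Nash local section of the form map with the curve $a\mapsto[a:0:1]$ (rotated back) and the Nash inverse of $a\mapsto(1+a)^2/(2(1+a^2))$ on $(-1,1)$. I expect the only mildly delicate point to be the reduction to diagonal form. It needs the rotation to act on $\P^2$ as a Nash diffeomorphism fixing $s$, and this follows from the transformation formula $M\mapsto RMR^T$ being linear in $(a,b,c)$ and preserving eigenvalues.
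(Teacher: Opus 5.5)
Your argument is correct and follows the paper's strategy: write sphericity as a function on $\P^2$ composed with the form map, invoke Theorem~\ref{form submersion}, and check that the function on $\P^2$ has nonzero differential away from the aspect $\pm1$ loci. The only difference is in that last check. The paper uses the explicit chart $[a:b:c]\mapsto\bigl(a/(a+c),\,b/(a+c)\bigr)$ on $\{a+c\neq0\}$, where sphericity becomes $1/(4x^2+4y^2-4x+2)$, whose gradient vanishes only at $(1/2,0)$; you instead reduce by $SO(2)$-invariance to a diagonal point $[\alpha:0:1]$ and differentiate in one variable. One small slip: the derivative of $(1+a)^2/\bigl(2(1+a^2)\bigr)$ is $(1-a^2)/(1+a^2)^2$, not $(1-a^2)/(1+a^2)$, but this does not affect its nonvanishing for $a\in(-1,1)$.
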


\begin{proof}  Let $\Q^*$ denote the set of quadrangles with aspect $\ne \pm 1$. By Theorem~\ref{TFAE}, a quadrangle lies in $\Q^*$ if and only if its form $[a:b:c]$ satisfies $a\ne -c$ as well as $b\ne 0$ or $a \ne c$. Thus $\Q^*={\operatorname{form}}^{-1}(U)$, where $$U = \{[a:b:c] \in \P^2\mid a\ne -c\} \cap \{[a:b:c] \in \P^2\mid b \ne 0 {\mbox{ or }} a \ne c\}$$ is an open subset of $\P^2$. Define a function $f:\R^2 \setminus \{(1/2,0)\} \rightarrow  \R$ by $$f(x,y) = \frac{1}{4x^2+4y^2-4x+2}.$$ The denominator in this expression is never $0$ since $$4x^{2}  + 4y^{2} - 4x + 2 = 4(x-\tfrac12)^{2} + 4y^{2} + 1 \ge 1,$$ so $f$ is a Nash function. Also, $$\nabla f(x,y) = \left( \frac{4 - 8x}{\left(4x^{2}+4y^{2}-4x+2\right)^{2}}, \frac{-8y}{\left(4x^{2}+4y^{2}-4x+2\right)^{2}} \right),$$ so $\nabla f(x,y) \ne  0$ since by assumption $(x,y) \ne (1/2,0)$.       Thus $f$ is a Nash submersion. Consider the Nash diffeomorphism $$g:U \longrightarrow \R^2\setminus \{(1/2,0)\}:[a:b:c] \longmapsto \left(\frac{a}{a+c},\frac{b}{a+c}\right)$$ whose inverse is $$g^{-1}:\R^2\setminus \{(1/2,0)\} \longrightarrow U:(x,y)\longmapsto [x:y:1-x].$$ Then for each $Q \in \Quad^*$, a calculation using equation~(\ref{sphericity formulas}) shows that $$f(g({\operatorname{form}}(Q))) = f\left(\frac{a}{a+c},\frac{b}{a+c}\right) = \frac{(a+c)^2}{2(a^2+2b^2+c^2)} = \sph(Q).$$ Since the maps $f$ and $g$ are Nash submersions and  the form map is a Nash submersion by Theorem~\ref{form submersion}, the restriction of the sphericity map   to $\Q^*$ is a Nash submersion.
\end{proof}

 For lack of an explicit reference for  the following lemma, we include a proof.

\begin{lemma} \label{Nash quotient}
Let $X$ be an embedded Nash submanifold of a Nash manifold $Y$, and let $G$ be a Nash group acting Nashly, properly and freely on $Y$ such that $X$ is $G$-invariant. Then $X/G$ is an embedded Nash submanifold of the Nash manifold $Y/G$.

\end{lemma}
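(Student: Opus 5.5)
First I will check that the restricted action on $X$ satisfies the hypotheses of Lemma~\ref{Nash lemma}, so that $X/G$ is a Nash manifold of dimension $\dim X-\dim G$ with Nash submersion $\pi_X:X\to X/G$. The restriction of the Nash action $G\times Y\to Y$ to the Nash submanifold $G\times X$ is Nash, and it lands in $X$ by invariance. Freeness is inherited. Properness also restricts, because $X$ is closed in $Y$. If $X$ is only locally closed, I will argue properness directly, via compact sets of $X$ mapping to compact sets of $Y$, using that $X$ carries the subspace topology. Since $X$ is $G$-invariant, the inclusion $\iota:X\to Y$ is equivariant. It therefore induces a map $\bar\iota:X/G\to Y/G$ with $\bar\iota\circ\pi_X=\pi_Y\circ\iota$.

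Next I will show that $\bar\iota$ is Nash and is a topological embedding. Because $\pi_Y\circ\iota$ is Nash, Lemma~\ref{Nash descent} applied to the surjective Nash submersion $\pi_X$ gives that $\bar\iota$ is Nash. It is injective because $X$ is a union of orbits. For the topological embedding I will use that $\pi_Y$ is open and that $\pi_Y^{-1}(\pi_Y(X))=X$. If $V\subseteq X$ is open and $G$-saturated, write $V=W\cap X$ with $W$ open in $Y$. Then $V=(G\cdot W)\cap X$, so $\pi_X(V)=\pi_Y(G\cdot W)\cap\pi_Y(X)$, which is open in the subspace topology.

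The main step is to show that $\bar\iota$ is an immersion. Here I will use the local product structure of the principal bundle $\pi_Y$ given in Lemma~\ref{Nash lemma}. Near a point $\pi_Y(x)$ with $x\in X$, choose a Nash local section $\sigma$ of $\pi_Y$ and the Nash trivialization $G\times U\cong\pi_Y^{-1}(U)$ given by $(g,u)\mapsto g\cdot\sigma(u)$. Since $X$ is $G$-invariant, in this trivialization $X$ corresponds to $G\times S$, where $S=\sigma^{-1}(X)\cong\pi_Y(X)\cap U$. Because the trivialization is a diffeomorphism and $X$ is an embedded submanifold, $G\times S$ is an embedded submanifold of $G\times U$. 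It follows that $S=\{u:(e,u)\in G\times S\}$ is an embedded submanifold of $U$: it is the preimage of $G\times S$ under the map $u\mapsto(e,u)$, and this map is transverse to $G\times S$ because the $G$-directions are tangent to $G\times S$. Thus $\pi_Y(X)$ is locally an embedded submanifold of $Y/G$. The same picture shows that $\pi_X$ restricted to $X\cap\pi_Y^{-1}(U)$ is the projection $G\times S\to S$, so $\bar\iota$ is, locally, the identification of $X/G$ with $S$ and is therefore an immersion. I expect this step to be the main point of care, particularly in checking that the transversality argument yields an embedded submanifold.

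Finally, for the Nash property I will observe that $\pi_Y(X)$ is semialgebraic in $Y/G$, by Tarski--Seidenberg applied to the Nash map $\pi_Y$. Being an embedded smooth submanifold that is semialgebraic, it is an embedded Nash submanifold by \cite[Proposition~2.9]{Sun}. The bijection $\bar\iota:X/G\to\pi_Y(X)$ is then a Nash diffeomorphism. It is Nash and immersive with the correct dimension, since $\dim S=\dim X-\dim G$. Its inverse is smooth, and it has semialgebraic graph because that graph is the transpose of the graph of $\bar\iota$. Hence the inverse is Nash as well, and $X/G$ is identified with the embedded Nash submanifold $\pi_Y(X)$ of $Y/G$.
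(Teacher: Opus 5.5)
Your proof is correct, and its outline matches the paper's. You restrict the action, get $\bar\iota$ Nash from Lemma~\ref{Nash descent}, prove it is open onto $\pi_Y(X)$ using openness of $\pi_Y$ and saturation, and finish with Tarski--Seidenberg, \cite[Proposition~2.9]{Sun}, and the graph-transpose argument for the inverse. On properness, the paper uses the sequential criterion. Your compact-set argument also works, and it needs no closedness of $X$, which is just as well since the lemma does not assume $X$ closed.

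The genuine difference is the immersion step.
\begin{itemize}
\item The paper's immersion argument uses no local trivialization. It notes that $\ker(d\pi_X)_x=T_x(G\cdot x)=\ker(d\pi_Y)_x$, because both quotient maps are submersions whose fibres are the orbits. It then differentiates $\bar\iota\circ\pi_X=\pi_Y|_X$ and reads off that $d\bar\iota$ is injective. Embeddedness of $\pi_Y(X)$ follows from ``injective immersion plus homeomorphism onto the image.''
\item You instead write $X$ locally as $G\times S$ using a trivialization of $\pi_Y$, and use transversality of $u\mapsto(e,u)$ to show that $S=\pi_Y(X)\cap U$ is embedded. This gives local embeddedness of the image and the count $\dim X-\dim G$ directly, so your topological-embedding step becomes logically redundant.
\end{itemize}
The price of your route is heavier machinery: the principal-bundle structure, the transversal preimage theorem, and a fact hidden in your phrase ``the identification of $X/G$ with $S$.'' That fact is that two surjective submersions with the same fibres, here $\pi_X$ and the projection $G\times S\to S$, induce a diffeomorphism of their bases. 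The paper's route is shorter and purely infinitesimal; yours gives a more concrete local picture of $X$ inside $Y$.
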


\begin{proof}
The restricted action of $G$ on $X$ is Nash and free. It is proper by the sequential criterion \cite[Proposition~21.5, p.~543]{Lee}: if $x_j$ and $g_j\cdot x_j$ converge in $X$, they also converge in $Y$, so properness of the action on $Y$ implies that $(g_j)$ has a convergent subsequence. Lemma~\ref{Nash lemma} applies to both $X$ and $Y$. Thus $X/G$ and $Y/G$ are Nash manifolds, and the quotient maps $\pi_X:X\to X/G$ and $ \pi_Y:Y\to Y/G $ are Nash submersions.  Since $X$ is $G$-invariant, $\pi_Y^{-1}(\pi_Y(X))=X$, so the map $i:X/G\to Y/G$ induced by the inclusion $X\subseteq Y$ is injective, and its image is $\pi_Y(X)$. Since $i\circ\pi_X=\pi_Y|_X$ and $\pi_X$ is a surjective Nash submersion, Lemma~\ref{Nash descent} shows that $i$ is Nash, and hence continuous. We verify that $i$ is open onto $\pi_Y(X)$. Let $O\subseteq X/G$ be open. Then $\pi_X^{-1}(O)$ is open in $X$, so there is an open set $V\subseteq Y$ such that $V\cap X=\pi_X^{-1}(O).$ Since $X$ is $G$-invariant, $\pi_Y(V)\cap\pi_Y(X)=\pi_Y(V\cap X)=i(O)$. The quotient map $\pi_Y:Y\to Y/G$ is open, so $\pi_Y(V)\cap\pi_Y(X)$ is open in the subspace $\pi_Y(X)$. Thus $i:X/G\to\pi_Y(X)$ is an open continuous bijection and hence a homeomorphism. It remains to show that $i$ is a Nash embedding.

We next show that $i$ is an immersion. Fix $p\in X/G$ and choose $x\in X$ such that $\pi_X(x)=p$. Since $X$ is $G$-invariant, $T_x(G\cdot x)\subseteq T_xX$.  Since $\pi_X$ and $\pi_Y$ are submersions, the submersion level-set theorem \cite[Corollary~5.13, p.~105, and Proposition~5.38, p.~117]{Lee} identifies the kernel of each differential at $x$ with the tangent space to the corresponding fiber. The fibers through $x$ are both the orbit $G\cdot x$, so
\begin{equation}\label{quotient kernels}
\ker(d\pi_X)_x=T_x(G\cdot x)=\ker(d\pi_Y)_x.
\end{equation}
{Differentiating} $i\circ\pi_X=\pi_Y|_X$ at $x${, we have}
\begin{equation}\label{quotient chain rule}
(di)_p\circ(d\pi_X)_x=(d\pi_Y)_x|_{T_xX}.
\end{equation}
Suppose $v\in T_p(X/G)$ and $(di)_p(v)=0$. Because $(d\pi_X)_x$ is surjective, we can choose $w\in T_xX$ such that $(d\pi_X)_x(w)=v$. Equations~\eqref{quotient kernels} and~\eqref{quotient chain rule} imply $$w\in\ker(d\pi_Y)_x=\ker(d\pi_X)_x,$$ so $v=0$. Thus $i$ is an immersion and, being a homeomorphism onto its image, a smooth embedding.

The image $S=\pi_Y(X)$ is semialgebraic by the Tarski-Seidenberg theorem \cite[Proposition~2.2.7, p.~29]{BCR} and is a smooth embedded submanifold by the preceding argument. Hence it has the induced Nash structure by \cite[Proposition~2.9]{Sun}. With its codomain restricted to $S$, the map $i$ is Nash and a smooth diffeomorphism, hence a surjective Nash submersion. Applying Lemma~\ref{Nash descent} to $i^{-1}\circ i=\operatorname{id}_{X/G}$ shows that $i^{-1}$ is Nash. Therefore, $i$ is a Nash embedding.
\end{proof}

\begin{theorem} Let $G$ be a Nash subgroup of $\operatorname{Sim}(\R^2)$.  For each $\alpha \in [-1,1]$, the orbit space $$\{\: {\mbox{\rm aspect } } \alpha {\mbox{\rm{ quadrangles}}}\:\}/G$$ is a closed embedded  Nash submanifold of $\Quad/G$ whose codimension is $1$ if $\alpha \ne 1$ and~$2$ if $\alpha = 1$.

\end{theorem}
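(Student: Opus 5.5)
The strategy is to show first that the level set $\Q_\alpha=\{Q\in\Q:\alpha(Q)=\alpha\}$ is a closed, $G$-invariant, embedded Nash submanifold of $\Q$ of the claimed codimension. Then Lemma~\ref{Nash quotient} gives that $\Q_\alpha/G$ is an embedded Nash submanifold of $\Q/G$. Closedness of $\Q_\alpha/G$ in $\Q/G$ follows because $\pi^{-1}(\pi(\Q_\alpha))=\Q_\alpha$ is closed and $\pi$ is an open surjection. Codimension is preserved because both dimensions drop by $\dim G$. The set $\Q_\alpha$ is $G$-invariant by Lemma~\ref{asp inv}.

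I will describe $\Q_\alpha$ through sphericity. By \eqref{sphericity formulas} the map $\alpha\mapsto (1+\alpha)^2/(2(1+\alpha^2))$ is strictly increasing on $[-1,1]$, since its derivative is $(1-\alpha^2)/(1+\alpha^2)^2\ge 0$ with zeros only at the endpoints. Hence $\Q_\alpha=\sph^{-1}(s_\alpha)$ for the corresponding value $s_\alpha\in[0,1]$. This level set is closed because $\sph$ is continuous by Proposition~\ref{sphericity}.

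For $\alpha\in(-1,1)$ we have $s_\alpha\in(0,1)$. The level set lies in the open set $\Q^*$ of quadrangles with aspect $\ne\pm1$. By Lemma~\ref{sph submersion}, $\sph|_{\Q^*}$ is a Nash submersion, so the Nash implicit function theorem makes $\sph^{-1}(s_\alpha)$ an embedded Nash submanifold of codimension $1$. It is semialgebraic as the fiber of a Nash function.

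The endpoints need a separate argument, because sphericity is not a submersion there. The idea is to use the form map directly. By Theorem~\ref{TFAE}(5), $\Q_{-1}=\operatorname{form}^{-1}(\{[a:b:c]:a+c=0\})$. This is the preimage of a projective line in $\P^2$, which is a closed Nash submanifold of codimension $1$. By Theorem~\ref{TFAE}(4), $\Q_1=\operatorname{form}^{-1}([1:0:1])$, the preimage of a point, of codimension $2$. Since the form map is a Nash submersion by Theorem~\ref{form submersion}, it is transverse to every submanifold of $\P^2$. Preimages of closed Nash submanifolds are therefore closed embedded Nash submanifolds of the same codimension. To get this in the Nash category, I would locally write the target submanifold as the zero set of independent Nash functions, namely $a/b+c/b$ or the like in an affine chart, and for the point, the two chart coordinates. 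Composing with the form map keeps the differentials independent, and the Nash implicit function theorem applies.

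The main obstacle is the transversality/preimage step in the Nash category, together with ensuring that the quotient statement is really covered by Lemma~\ref{Nash quotient}. Everything else is bookkeeping with the cited lemmas.
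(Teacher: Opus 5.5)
Your proposal is essentially the paper's proof. You treat $\alpha\in(-1,1)$ as level sets of sphericity, using the submersion of Lemma~\ref{sph submersion}. You treat $\alpha=-1$ and $\alpha=1$ as preimages, under the form submersion, of the line $a+c=0$ and of the point $[1:0:1]$. You then pass to $\Q/G$ via Lemma~\ref{Nash quotient}, get closedness from saturation, and get the codimension by subtracting $\dim G$ from both dimensions.

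The one step you omit is checking that each aspect level set is nonempty, which the codimension claim needs in order to have content. The paper supplies this by exhibiting, for every $\alpha\in[-1,1]$, the quadrangle with vertices $(0,0),(1,\alpha),(1,-2),(-1,0)$, whose form is $[\alpha:0:1]$.
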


\begin{proof}   By equation~(\ref{sphericity formulas}), if a quadrangle $Q$ has aspect $\alpha$, then $$\sph(Q)=\frac{(1+\alpha)^2}{2(1+\alpha^2)}.$$   Conversely, set $r_\alpha=\sph(Q)$. Remark~\ref{sph calc} recovers the aspect from the sphericity, so the function $\alpha\mapsto r_\alpha$ is one-to-one on $[-1,1]$. Thus the aspect $\alpha$ quadrangles are exactly the quadrangles with sphericity $r_\alpha$. To see that every $\alpha\in[-1,1]$ occurs, consider the four vertices $v_1=(0,0),$ $v_2=(1,\alpha),$  $v_3=(1,-2),$ $v_4=(-1,0).$ For $\alpha\in[-1,1]$, these points are distinct and not all collinear. Choose three noncollinear vertices among them. Their three joining lines form a {nondegenerate} triangle, so the six joining lines define a quadrangle by Definition~\ref{quadrangle def}. The two pairs of opposite sides $A=L_{12},A'=L_{34}$ and $B=L_{13},B'=L_{24}$ have respective normals $[-\alpha:1], [1:1], [2:1], [-\alpha/2:1].$ {By Equation~(\ref{affine form coefficients}),}
$$(a,b,c)=\left(-\frac{\alpha(2+\alpha)}2,0,-\frac{2+\alpha}2\right).$$
Since $2+\alpha\ne0$, this quadrangle has form $[\alpha:0:1]$ and hence aspect $\alpha$.

We now prove the submanifold assertion, beginning with the sphericity values in $(0,1)$. By Remark~\ref{sph calc}, the quadrangles with these sphericity values {form the set} $\Q^*$ of quadrangles with aspect $\ne \pm 1$ from Lemma~\ref{sph submersion}. Let $r \in (0,1)$, and let~$\Q_r$ be the set of quadrangles with sphericity $r$. The sphericity map restricted to $\Q^*$  is a submersion by Lemma~\ref{sph submersion}, so the preimage of $\{r\}$ in~$\Q^*$ is an embedded submanifold of codimension~$1$ \cite[Corollary~5.13, p.~105]{Lee}. Since $r\in(0,1)$, this preimage is exactly $\Q_r$, the preimage of $\{r\}$ in~$\Q$ under the sphericity function. The latter description also shows that $\Q_r$ is closed in~$\Q$. Also, since $\{r\}\subseteq \R$ is a semialgebraic set, and preimages of semialgebraic sets under semialgebraic maps are semialgebraic \cite[Proposition~2.2.7, p.~29]{BCR}, $\Q_r$ is a semialgebraic subset of $\Q$, and so $\Q_r$ is a closed embedded Nash submanifold of $\Q$ of codimension~$1$ for all $r \in (0,1)$.

Consider the case $\alpha =-1$, which is sphericity $0$. Let $L$ be the line  in $\P^2$ given by $L = \{[a:b:-a] \mid a,b \in \R {\mbox{ not both }} 0\}.$ By Theorem~\ref{TFAE}, the set $\Q_0$ of quadrangles with sphericity $0$ (aspect $-1$) is the preimage of $L$ under the form map. {By \cite[Proposition~5.16, p.~106]{Lee}, each point of $L$ has a neighborhood} $V$ in $\P^2$ and a submersion $h:V\to\R$ such that $L\cap V=h^{-1}(0)$. Since the form map is a submersion by Theorem~\ref{form submersion}, the composition of $h$ with the form map is a submersion on ${\operatorname{form}}^{-1}(V)$. Thus \cite[Corollary~5.13, p.~105]{Lee} shows locally that $\Q_0$ is an embedded submanifold of $\Q$ of codimension~$1$. Since $L$ is closed and semialgebraic and the form map is Nash, $\Q_0$ is a closed semialgebraic subset of $\Q$, and hence is a closed embedded Nash submanifold.

Finally, consider the case $\alpha =1$, which is sphericity $1$. By Theorem~\ref{TFAE}, the set~$\Q_1$ of quadrangles with sphericity $1$ is the preimage of $[1:0:1]$ under the form map. Since the form map is a submersion, \cite[Corollary~5.13, p.~105]{Lee} shows that $\Q_1$ is an embedded submanifold of $\Q$ of codimension $2$. It is closed and semialgebraic because it is the preimage of a point under a Nash map, and hence it is a closed embedded Nash submanifold.

We have shown that for each possible sphericity $r \in [0,1]$, the set $\Q_r$ of quadrangles with sphericity $r$ is a closed embedded Nash submanifold that has codimension~$1$ if $r \ne 1$ and codimension~$2$ if $r = 1$. If $G$ is a  Nash subgroup of $\operatorname{Sim}(\R^2)$, then $\Q_r$ is $G$-invariant by Proposition~\ref{sphericity}, and so $\Q_r/G$ is an embedded Nash submanifold of $\Q/G$ by Lemma~\ref{Nash quotient}. Since $\Q_r$ is $G$-invariant, it is saturated for the quotient map $\pi:\Q\to\Q/G$, and hence $\pi^{-1}(\pi(\Q_r))=\Q_r$. Since $\Q_r$ is closed in $\Q$, the definition of the quotient topology implies that $\pi(\Q_r)=\Q_r/G$ is closed in $\Q/G$.  The codimension is unchanged after quotienting because the action on $\Q_r$ is still free and proper, so $\dim(\Q_r/G)=\dim \Q_r-\dim G $ and $ \dim(\Q/G)=\dim \Q-\dim G.$ Thus $\Q_r/G$ and $\Q/G$ differ by the same codimension as $\Q_r$ and $\Q$.  This proves the theorem.
 \end{proof}

\section{The moduli space of \texorpdfstring{{nonexceptional}}{nonexceptional} quadrangles}

We   use the aspect function to study the moduli space $\Q/G$ of quadrangles modulo  Nash subgroups $G$ of $\operatorname{Sim}(\R^2)$. To do so, we need to restrict to what we will call {nonexceptional} quadrangles.

\begin{definition} A quadrangle $Q$ is {\it {nonexceptional}} if its aspect is not $-1$, $0$ or $1$.
\end{definition}

  As we show in Lemma~\ref{asp is nash}, one reason for restricting to nonexceptional quadrangles is that the aspect function is Nash on these quadrangles even though it is not Nash on all of $\Q$. There are other issues too. At aspect $1$, a matrix representing the form is scalar, so it has no distinguished pair of eigendirections. At aspect $-1$, the two eigenvalues have equal absolute value and opposite signs, producing a quarter-turn ambiguity in choosing an eigenframe. Aspect $0$ separates definite from indefinite forms, and the boost construction in Theorem~\ref{hard} applies only to two nonzero aspect values of the same sign. So for this section we focus on the quadrangles whose aspect is not one of these three problematic values.

Taking the set of {nonexceptional} quadrangles amounts to removing three fibers from the aspect map from $\Q$ to $[-1,1]$. Off the fibers we have   a Nash manifold:

 \begin{lemma}
Let $G$ be a  Nash subgroup of $\operatorname{Sim}(\R^2)$.  The set of {nonexceptional} quadrangles is an open Nash submanifold of $\Quad$ whose $G$-orbit space is an open Nash submanifold of $\Quad/G$.

 \end{lemma}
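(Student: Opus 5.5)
The plan is to describe the nonexceptional quadrangles as the preimage of an open semialgebraic set under sphericity. By equation~(\ref{sphericity formulas}), aspect is determined by sphericity through a one-to-one function on $[-1,1]$, so aspects $-1,0,1$ correspond to sphericity values $0$, $1/2$ and $1$. Indeed $\alpha=0$ gives $\sph=1/2$, and injectivity follows from Remark~\ref{sph calc}. Hence the set of nonexceptional quadrangles is
$$\Q^{\circ}=\sph^{-1}\bigl((0,1/2)\cup(1/2,1)\bigr).$$
By Proposition~\ref{sphericity}, $\sph$ is a Nash function on $\Q$, so it is continuous and semialgebraic. Therefore $\Q^{\circ}$ is open and, by Tarski--Seidenberg (preimages of semialgebraic sets under semialgebraic maps), semialgebraic. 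An open semialgebraic subset of a Nash manifold is an open Nash submanifold, with charts given by restricting the finite Nash atlas of Lemma~\ref{Q is Nash}.

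For the quotient, I would use the Nash function $\overline{\sph}$ on $\Q/G$ provided by Proposition~\ref{sphericity}, which satisfies $\sph=\overline{\sph}\circ\pi$. Since $\Q^{\circ}$ is $G$-invariant, we have $\pi(\Q^{\circ})=\overline{\sph}^{-1}\bigl((0,1/2)\cup(1/2,1)\bigr)$. This set is open and semialgebraic in $\Q/G$ because $\overline{\sph}$ is Nash, so it is an open Nash submanifold of $\Q/G$.

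It remains to identify $\pi(\Q^{\circ})$ with the orbit space $\Q^{\circ}/G$, including its Nash structure. The restricted action of $G$ on $\Q^{\circ}$ is free, proper and Nash; properness follows by the sequential argument in Lemma~\ref{Nash quotient}. So Lemma~\ref{Nash quotient} (or Lemma~\ref{Nash lemma} directly) makes $\Q^{\circ}/G$ an embedded Nash submanifold of $\Q/G$ with image $\pi(\Q^{\circ})$. Since this image is open, the embedded structure agrees with the open-subset structure; alternatively, uniqueness in Lemma~\ref{Nash lemma} gives the same conclusion, since $\pi|_{\Q^{\circ}}$ is a Nash submersion onto the open subset.

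The only mildly delicate point is this compatibility of Nash structures. Openness of $\pi(\Q^{\circ})$ can also be seen from openness of the quotient map, so I expect no real obstacle.
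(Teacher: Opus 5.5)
Your proof is correct and follows essentially the paper's route: you identify the nonexceptional quadrangles with $\sph^{-1}((0,1/2)\cup(1/2,1))$, which is open and semialgebraic in $\Q$, and then show that its image in $\Q/G$ is open and semialgebraic. The differences are minor. You obtain the image as $\overline{\sph}^{-1}$ of the same set, whereas the paper uses openness of $\pi$ on saturated sets together with Tarski--Seidenberg. You also check explicitly, via uniqueness in Lemma~\ref{Nash lemma}, that the open-subset Nash structure agrees with the intrinsic one on $\Q^{\circ}/G$, a point the paper leaves implicit.
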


  \begin{proof}
By Proposition~\ref{sphericity}, sphericity is a Nash function on $\Quad$, so it is semialgebraic. Thus the open subset $$U=\sph^{-1}((0,1/2) \cup (1/2,1)) \subseteq \Quad$$ is  semialgebraic and  hence a Nash submanifold of $\Quad$. By equation~(\ref{sphericity formulas}), sphericity is $0,$ $1/2$ or $1$ if and only if aspect is $-1,$ $0$ or $1$. Hence this submanifold is the set of {nonexceptional} quadrangles, so the {nonexceptional} quadrangles form a Nash submanifold of $\Quad$.

Since sphericity is invariant under similarities by Proposition~\ref{sphericity}, the set $U$ of {nonexceptional} quadrangles is $G$-invariant, and so $U$ is saturated with respect to the quotient map $\pi:\Quad\to\Quad/G$. Because $U$ is open in $\Quad$ and $\pi$ is a quotient map, the image $\pi(U)$ is open in $\Quad/G$. Moreover, $\pi$ is a Nash map (Theorem~\ref{main Nash}), hence semialgebraic. Since $U$ is semialgebraic, it follows from the Tarski-Seidenberg theorem \cite[Proposition~2.2.7, p.~29]{BCR} that $\pi(U)$ is also semialgebraic. Therefore, $\pi(U)$ is an open semialgebraic subset of the Nash manifold $\Quad/G$, and hence a Nash submanifold. This submanifold {is the} $G$-orbit space of the {nonexceptional} quadrangles.
\end{proof}

In Theorem~\ref{hard}, we work with the fibers of the aspect map, and to ensure that these fibers vary Nashly across the non-excluded aspect ratios, we need that the aspect map is a Nash function on the {nonexceptional} quadrangles, which the next lemma shows is the case.

\begin{lemma} \label{asp is nash}
Let $\Q^*$ be the subset of $\Q$ consisting of the quadrangles with aspect in $(-1,1)$, and let $G$ be a Nash subgroup of $\operatorname{Sim}(\R^2)$. Then $\Q^*$ is an open Nash submanifold of $\Q$, the restriction of the aspect map to $\Q^*$ is a Nash function that descends to a Nash function on $\Q^*/G$, and this restriction has no Nash extension to all of $\Q$.
\end{lemma}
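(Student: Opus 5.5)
The plan is to describe $\Q^*$ through the sphericity function, obtain aspect on $\Q^*$ by composing sphericity with the explicit formula of Remark~\ref{sph calc}, and then rule out a Nash extension by restricting to a Nash curve through a point of aspect $1$, along which aspect has a corner.

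\emph{Openness.} By equation~(\ref{sphericity formulas}) and Remark~\ref{sph calc}, the map $\alpha\mapsto (1+\alpha)^2/(2(1+\alpha^2))$ is a bijection from $[-1,1]$ onto $[0,1]$. It sends $\pm1$ to $1$ and $0$. Hence $\Q^*=\sph^{-1}((0,1))$, which is the set $\Q^*$ of Lemma~\ref{sph submersion}. By Proposition~\ref{sphericity}, $\sph$ is Nash and therefore continuous and semialgebraic. So $\Q^*$ is an open semialgebraic subset of $\Q$, hence an open Nash submanifold.

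\emph{Aspect is Nash on $\Q^*$.} On $\Q^*$ both $\sph$ and $1-\sph$ are strictly positive Nash functions. Composing them with the positive square root on $(0,\infty)$, which is Nash, gives Nash functions. The denominator $\sqrt{\sph}+\sqrt{1-\sph}$ is positive, so the formula of Remark~\ref{sph calc} exhibits $\alpha|_{\Q^*}$ as a Nash function.

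\emph{Descent to $\Q^*/G$.} Since $\sph$ is $G$-invariant, $\Q^*$ is $G$-saturated. The restriction $\pi|_{\Q^*}:\Q^*\to\Q^*/G$ is a surjective Nash submersion, either by Theorem~\ref{main Nash} applied to the open subset or directly by Lemma~\ref{Nash lemma}. Aspect is $G$-invariant by Lemma~\ref{asp inv}, so Lemma~\ref{Nash descent} gives the descended Nash function on $\Q^*/G$.

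\emph{No Nash extension.} This is the main step. Suppose $F$ is a Nash function on $\Q$ with $F=\alpha$ on $\Q^*$.
\begin{itemize}
\item Take $Q_0$ to be the quadrangle with form $[1:0:1]$ from the example in the proof of the preceding theorem, with $\alpha=1$.
\item By Theorem~\ref{form submersion} and the local-section criterion, there is a Nash section $\sigma$ of the form map defined near $[1:0:1]$ with $\sigma([1:0:1])=Q_0$.
\item Consider the Nash curve $\gamma(t)=\sigma([1+t:0:1])$ for small $|t|$. For $t\neq 0$, the matrix $M(1+t,0,1)$ is definite and nonscalar, so $\gamma(t)\in\Q^*$.
\item From the eigenvalues $2(1+t)$ and $2$, the aspect along the curve is $1+t$ for $t<0$ and $1/(1+t)$ for $t>0$.
\item Therefore $F\circ\gamma$ is real analytic near $0$, yet for $t\ne0$ it equals a function whose one-sided derivatives at $0$ are $+1$ and $-1$. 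Continuity of $F\circ\gamma$ extends this equality to $t=0$, so $F\circ\gamma$ is not differentiable at $0$, a contradiction.
\end{itemize}
The same argument would work at aspect $-1$ using forms $[-1+s:0:1]$. The only care needed is to confirm that the curve stays in $\Q^*$ for $t\neq0$, which the definiteness and nonscalar check above already gives.
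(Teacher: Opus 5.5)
Your proof is correct. The first three parts follow the paper: you identify $\Q^*=\sph^{-1}((0,1))$ and write aspect as the Nash expression of Remark~\ref{sph calc} in sphericity. For descent you apply Lemma~\ref{Nash descent} to the quotient map of $\Q^*$ itself. The paper instead applies the same formula to the descended sphericity on $\Q/G$, and the two are equivalent.

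The genuine difference is in the non-extension step. The paper writes down an explicit analytic family of parallelograms $Q_t$ with sides $X\pm\sqrt{1-t}\,Y=\pm1$. It computes from equation~(\ref{affine form coefficients}) that $\operatorname{form}(Q_t)=[1:0:t-1]$. The family therefore passes through an aspect $-1$ point, a square, at $t=0$, with aspect $t-1$ for $t>0$ and $1/(t-1)$ for $t<0$. You instead pass through an aspect $1$ point, and you produce the curve abstractly by lifting $t\mapsto[1+t:0:1]$ through a Nash local section of the form map, which exists by Theorem~\ref{form submersion}.

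Your route needs no coordinate computation beyond the eigenvalues of $\operatorname{diag}(2(1+t),2)$, and it adapts with only trivial changes to any point of $\Q\setminus\Q^*$. The cost is that it leans on the submersion theorem and the Aizenbud--Gourevitch local-section result. The paper's route is elementary and self-contained, and it exhibits a concrete geometric degeneration along which aspect has a corner.
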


\begin{proof} By Remark~\ref{sph calc}, $\Q^*=\sph^{-1}((0,1))$, so Proposition~\ref{sphericity} shows that $\Q^*$ is an open Nash submanifold of $\Q$. On $(0,1)$, the expression in Remark~\ref{sph calc} is a Nash function of sphericity, and hence the aspect map is Nash on $\Q^*$. Since sphericity descends to a Nash function on $\Q/G$ by Proposition~\ref{sphericity}, the same expression shows that aspect descends to a Nash function on the open Nash submanifold $\Q^*/G$ of $\Q/G$. To prove that this restriction has no Nash extension (or even an analytic extension) to $\Quad$, for each $t\in (-1,1)$ let $s(t)=\sqrt{1-t}$ and let $Q_t$ be the parallelogram for which two of its pairs of  opposite sides are $$\{X+s(t)Y=\pm 1\} \: {\text{ and }} \: \{X-s(t)Y=\pm 1\}.$$ The four vertices of $Q_t$ are
$$v_1=(1,0),\:\: v_2=(0,1/s(t)),\:\: v_3=(0,-1/s(t)),\:\: v_4=(-1,0),$$
and the remaining two sides are the coordinate axes. Since $s(t)>0$ and $s(t)$ is analytic on $(-1,1)$, all the vertices and normals of $Q_t$ vary analytically with $t$. Hence $t\mapsto Q_t$ defines an analytic map $\gamma:(-1,1)\to \Q.$ By equation~(\ref{affine form coefficients}), applied to the two displayed pairs of opposite sides, the form of $Q_t$ is $[1:0:t-1]$. Hence $$(\alpha\circ\gamma )(t) = \frac{|t|-(2-t)}{|t|+(2-t)} = \begin{cases} t-1,& t>0,\\ \dfrac{1}{t-1},& t<0. \end{cases}$$ Thus $$\lim_{t\to 0^+}(\alpha\circ\gamma )'(t)=1 \:\mbox{ and }\: \lim_{t\to 0^-}(\alpha\circ\gamma )'(t)=-1.$$ Therefore, $\alpha\circ\gamma $ is not analytic at $0$. For $t\ne0$, the displayed formula shows that $Q_t\in\Q^*$. If the restriction of aspect to $\Q^*$ had an analytic extension to all of $\Q$, its composition with $\gamma$ would be analytic and would agree with $\alpha\circ\gamma$ for $t\ne0$. By continuity it would also agree at $t=0$, a contradiction.
\end{proof}

The next theorem shows that for any Nash subgroup $G$ of $\operatorname{Sim}(\R^2)$, the fibers of the descended aspect map on $\Q/G$ are Nash diffeomorphic over $(-1,0)$, and similarly the fibers are Nash diffeomorphic over $(0,1)$. To prove this theorem, it will be convenient to trace-normalize the matrix $M(a,b,c)$ of the form of a {nonexceptional} quadrangle, which we do along the following lines.

Theorem~\ref{TFAE} implies that the form $[a:b:c]$ of a {nonexceptional} quadrangle occurs in the affine chart $a+c \ne 0$ of $\P^2$, and so we can represent the form and its eigenvalues in a canonical way.

\begin{notation}
Let $Q$ be a {nonexceptional} quadrangle with form $[a:b:c]$. Since $Q$ is {nonexceptional}, $a+c \ne 0$,  and we let $N(Q)$ be the normalized form $$N(Q)  \:\:= \:\: \frac{1}{2(a+c)}M(a,b,c) \:\: = \:\:  \frac{1}{a+c}\begin{pmatrix} a & -b \\ -b & c\end{pmatrix}.$$ 
\end{notation}

The matrix $N(Q)$ is independent of the choice of representative of the form $[a:b:c]$ of $Q$ and has trace $1$.

\begin{lemma}\label{normalized eigenvalues}
Let $Q$ be a nonexceptional quadrangle, and let $\lambda_1(Q)$ and $\lambda_2(Q)$ be the eigenvalues of $N(Q)$, labeled so that $|\lambda_1(Q)|<|\lambda_2(Q)|$. Then
\begin{equation}\label{alpha}
\alpha(Q)=\frac{\lambda_1(Q)}{\lambda_2(Q)}.
\end{equation}
Consequently, the eigenvalues depend only on the aspect of $Q$ and satisfy
$$\lambda_1(Q)=\frac{\alpha(Q)}{1+\alpha(Q)} \quad\mbox{and}\quad \lambda_2(Q)=\frac{1}{1+\alpha(Q)}.$$
\end{lemma}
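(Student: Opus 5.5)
The plan is to reduce everything to the definition of aspect in~(\ref{asp eq}), using that $N(Q)$ is a nonzero scalar multiple of $M(a,b,c)$. Since $Q$ is nonexceptional, Theorem~\ref{TFAE} gives $a+c\neq 0$, so $N(Q)=\frac{1}{2(a+c)}M(a,b,c)$ is well defined. Its eigenvalues are therefore $\lambda/(2(a+c))$, where $\lambda$ runs over the eigenvalues of $M(a,b,c)$. Rescaling by a nonzero real number changes neither the ratio of absolute values nor the sign of the product of the eigenvalues. Hence $\alpha(Q)$ may be computed from the eigenvalues of $N(Q)$ using the same formula~(\ref{asp eq}).

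Next I would justify the labeling $|\lambda_1(Q)|<|\lambda_2(Q)|$. Equality of the absolute values would give aspect $\pm1$, and a zero eigenvalue would give aspect $0$; both are excluded by nonexceptionality. So the two absolute values are distinct and both nonzero. Then~(\ref{asp eq}) reads $\alpha(Q)=\sgn(\lambda_1\lambda_2)\,|\lambda_1|/|\lambda_2|$. Because $\sgn(\lambda_1\lambda_2)=\sgn(\lambda_1/\lambda_2)$, this equals $\lambda_1/\lambda_2$, which is~(\ref{alpha}).

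For the explicit formulas I would use that $N(Q)$ has trace $1$, so $\lambda_1+\lambda_2=1$. Substituting $\lambda_1=\alpha\lambda_2$ gives $(1+\alpha)\lambda_2=1$. Here $1+\alpha\ne 0$ since $\alpha\neq -1$. Solving yields $\lambda_2=1/(1+\alpha)$ and $\lambda_1=\alpha/(1+\alpha)$, and these depend only on the aspect.

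The only step needing care is confirming that nonexceptionality excludes the ties and zero eigenvalues, which makes the labeling unambiguous. Everything else is direct substitution, so I expect no real obstacle.
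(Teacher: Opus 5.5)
Your proposal is correct and follows the paper's proof essentially step for step. Both arguments use three facts: passing from $M(a,b,c)$ to $N(Q)$ by a nonzero scalar leaves the signed ratio of absolute values unchanged; $\alpha(Q)\neq\pm1$ makes the absolute values distinct, so $\sgn(\lambda_1\lambda_2)|\lambda_1|/|\lambda_2|=\lambda_1/\lambda_2$; and the trace-one normalization with $\alpha(Q)\neq-1$ then gives the explicit eigenvalues. Your extra step excluding a zero eigenvalue is harmless but unnecessary, since $|\lambda_1|<|\lambda_2|$ already forces $\lambda_2\neq0$.
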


\begin{proof}
Let $[a:b:c]$ be the form of $Q$. Since $N(Q)$ is a nonzero scalar multiple of $M(a,b,c)$, passing from $M(a,b,c)$ to $N(Q)$ does not change the signed ratio of the absolute values of the eigenvalues. Since $\alpha(Q)\ne\pm1$, these absolute values are distinct, so the eigenvalues can be labeled as stated. Thus, with $\lambda_i=\lambda_i(Q)$, the definition of aspect in equation~(\ref{asp eq})   implies
$$\alpha(Q)=\sgn(\lambda_1\lambda_2)\frac{|\lambda_1|}{|\lambda_2|} =\frac{\lambda_1}{\lambda_2}.$$
Since $\tr N(Q)=1$, we have $1=\lambda_1+\lambda_2=(1+\alpha(Q))\lambda_2.$ Since $\alpha(Q)\ne-1$, solving for $\lambda_2$ and using $\lambda_1=\alpha(Q)\lambda_2$ proves the stated formulas.
\end{proof}

We show next that if \(\alpha\) and \(\beta\) both lie in \((-1,0)\) or both lie in \((0,1)\), then the corresponding orbit spaces are Nash diffeomorphic. The idea behind the proof is to translate the centroid to the origin, align the principal axes with the coordinate axes,  boost along the axes that change the aspect from $\alpha$ to $\beta$ (as in Figure~\ref{double squeeze quadrangles}), and then restore the original coordinates.   The challenge here is that for $\alpha \ne \beta$, this process cannot be accomplished by a single affine transformation  for all quadrangles of aspect $\alpha$,  and so must be done for each quadrangle individually in a way that varies Nashly across the across the space of aspect $\alpha$ quadrangles and descends also to the orbit space.

 \begin{theorem} \label{hard} Let $\alpha,\beta \in (-1,0) \cup (0,1)$ with the same sign, and let $G$ be a  Nash subgroup of $\operatorname{Sim}(\R^2)$. There  is a Nash diffeomorphism $$\{\: {\mbox{\rm aspect } } \alpha {\mbox{\rm{ quadrangles}}}\:\}/G \:\: \simeq \:\: \{\: {\mbox{\rm aspect } } \beta {\mbox{\rm{ quadrangles}}}\:\}/G.$$

\end{theorem}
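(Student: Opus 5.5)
The plan is to build, for each aspect $\alpha$ quadrangle $Q$, an affine "boost" $g_Q$ that depends Nashly on $Q$ and changes the aspect to $\beta$. The map $Q\mapsto g_Q\cdot Q$ should be $\operatorname{Sim}(\R^2)$-equivariant and have an inverse of the same kind, so that it descends to the orbit spaces.

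\emph{The linear part.} Write $\Q_\alpha$ for the set of aspect $\alpha$ quadrangles. By Lemma~\ref{normalized eigenvalues}, for $Q\in\Q_\alpha$ the trace-normalized form $N(Q)$ is symmetric with the fixed distinct eigenvalues $\lambda_1=\alpha/(1+\alpha)$ and $\lambda_2=1/(1+\alpha)$. Set $s_1=\sqrt{\alpha/\beta}$ and $s_2=1$; this is possible because $\alpha$ and $\beta$ have the same sign, so $\alpha/\beta>0$. Since $\lambda_1\ne\lambda_2$, there are unique constants $p,q$, depending only on $\alpha$ and $\beta$, with $p+q\lambda_i=s_i$ for $i=1,2$. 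Define
$$S_Q=pI+qN(Q).$$
This matrix is symmetric and positive definite, it commutes with $N(Q)$, and it has eigenvalue $s_i$ on the $\lambda_i$-eigenline. The map $Q\mapsto S_Q$ is Nash on the open set of nonexceptional quadrangles, because the form map is Nash (Theorem~\ref{form submersion}) and $a+c\ne0$ there.

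\emph{The boost.} Let $c(Q)$ be the centroid of the four vertices, which is a polynomial function of $Q$. Define $g_Q(x)=S_Q(x-c(Q))+c(Q)$ and $\Phi(Q)=g_Q\cdot Q$. Affine maps preserve $\Q$, so $\Phi$ takes values in $\Q$, and $\Phi$ is Nash because the $\operatorname{Aff}(\R^2)$-action is Nash.

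By Remark~\ref{form remark}, $M(\Phi(Q))$ is proportional to $S_Q^{-1}M(Q)S_Q^{-1}$. This matrix has the same eigenvectors as $M(Q)$, with the eigenvalues $\lambda_i$ replaced by $\lambda_i/s_i^2$. Their ratio is $\alpha\cdot\beta/\alpha=\beta$, and the smaller-in-modulus eigenvalue stays $\lambda_1/s_1^2$ after checking $|\beta|<1$. Hence $\Phi$ maps $\Q_\alpha$ into $\Q_\beta$, and it restricts to a Nash map between these closed embedded Nash submanifolds, which were obtained in the preceding theorem.

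\emph{Equivariance.} Let $h(x)=\mu Rx+b$ with $R\in O(2)$ and $\mu>0$. By Remark~\ref{form remark} and trace normalization, $N(hQ)=RN(Q)R^T$, so $S_{hQ}=RS_QR^T$. Also $c(hQ)=h(c(Q))$, since the centroid is affinely equivariant. A direct check then gives $g_{hQ}\circ h=h\circ g_Q$: the linear parts are $RS_QR^T\mu R=\mu RS_Q$, and the centroid bookkeeping handles the translations. Therefore $\Phi(hQ)=h\,\Phi(Q)$ for every $h\in\operatorname{Sim}(\R^2)$, and in particular for every $h\in G$.

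\emph{The inverse.} Run the same construction from $\beta$ to $\alpha$ to get a map $\Psi$. The matrix $N(\Phi(Q))$ has the same eigenlines as $N(Q)$, with the roles of the two eigenvalues preserved, and $c(\Phi(Q))=c(Q)$ because $g_Q$ fixes the centroid and maps the vertices affinely. So the boost for $\Phi(Q)$ has eigenvalue $1/s_i$ on the same lines, that is, it equals $S_Q^{-1}$ recentred at $c(Q)$. Hence $\Psi\circ\Phi=\operatorname{id}$, and symmetrically $\Phi\circ\Psi=\operatorname{id}$, so $\Phi:\Q_\alpha\to\Q_\beta$ is a $G$-equivariant Nash diffeomorphism.

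\emph{Descent.} Let $\pi_\alpha:\Q_\alpha\to\Q_\alpha/G$ and $\pi_\beta:\Q_\beta\to\Q_\beta/G$ be the quotient maps, which are surjective Nash submersions by Lemma~\ref{Nash lemma}. The map $\pi_\beta\circ\Phi$ is Nash and constant on $G$-orbits, so by Lemma~\ref{Nash descent} it induces a Nash map $\overline\Phi:\Q_\alpha/G\to\Q_\beta/G$. Likewise $\Psi$ induces a Nash map $\overline\Psi$, and $\overline\Psi$ is inverse to $\overline\Phi$. This gives the required Nash diffeomorphism.

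The main obstacle is the eigenvalue bookkeeping. One must confirm that the eigenvalue labels $|\lambda_1|<|\lambda_2|$ are tracked correctly through the rescaling, so that the signed ratio comes out as exactly $\beta$ rather than $1/\beta$. One must also confirm that the choice $S_Q=pI+qN(Q)$ is well defined without any choice of eigenframe. The second point is exactly why the polynomial-in-$N(Q)$ construction is used: it avoids the frame ambiguities that occur at aspect $\pm1$.
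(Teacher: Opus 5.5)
Your proposal is correct and follows essentially the paper's route: the paper also builds a centroid-fixed boost whose linear part is affine in $N(Q)$, via the projection $P(Q)=((1+\alpha)N(Q)-I)/(\alpha-1)$, computes the new form with Remark~\ref{form remark}, inverts with the reverse boost, proves $\operatorname{Sim}(\R^2)$-equivariance, and descends via Lemma~\ref{Nash descent}. The only difference is normalization: the paper uses the determinant-one boost $sP(Q)+s^{-1}(I-P(Q))$ with $s=(\alpha/\beta)^{1/4}$, and your $S_Q$ is exactly $s$ times that matrix, so your map is the paper's map followed by a homothety about the centroid.
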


\begin{proof}
For $\delta\in(-1,0)\cup(0,1)$, let $\Quad(\delta)$ denote the set of quadrangles of aspect~$\delta$. By Lemma~\ref{normalized eigenvalues}, the eigenvalues of $N(Q)$ for $Q\in\Quad(\delta)$ are
$$\lambda_1(\delta)=\frac{\delta}{1+\delta} \quad\mbox{and}\quad \lambda_2(\delta)=\frac{1}{1+\delta}.$$
For $i=1,2$, denote the $\lambda_i(\delta)$-eigenspace of $N(Q)$ by
$$\mathcal E_i(Q)=\ker\big(N(Q)-\lambda_i(\delta)I\big).$$
  Since $\delta=\alpha(Q)\ne1$, we can define a $2 \times 2$ matrix
\begin{equation}\label{projection formula}
P(Q)
=\frac{N(Q)-\lambda_2(\delta)I}
{\lambda_1(\delta)-\lambda_2(\delta)}
=\frac{(1+\alpha(Q))N(Q)-I}{\alpha(Q)-1}.
\end{equation}
If $v_i\in\mathcal E_i(Q)$, then
$$P(Q)v_1 =\frac{\lambda_1(\delta)-\lambda_2(\delta)} {\lambda_1(\delta)-\lambda_2(\delta)}v_1=v_1 \:\: \mbox{and}\:\: P(Q)v_2 =\frac{\lambda_2(\delta)-\lambda_2(\delta)} {\lambda_1(\delta)-\lambda_2(\delta)}v_2=0.$$
Since $N(Q)$ is symmetric, its   eigenspaces are orthogonal, and
$$\R^2=\mathcal E_1(Q)\mathbin{\oplus}\mathcal E_2(Q).$$

\begin{figure}[tbp]
\centering
\includegraphics[width=.8\textwidth]{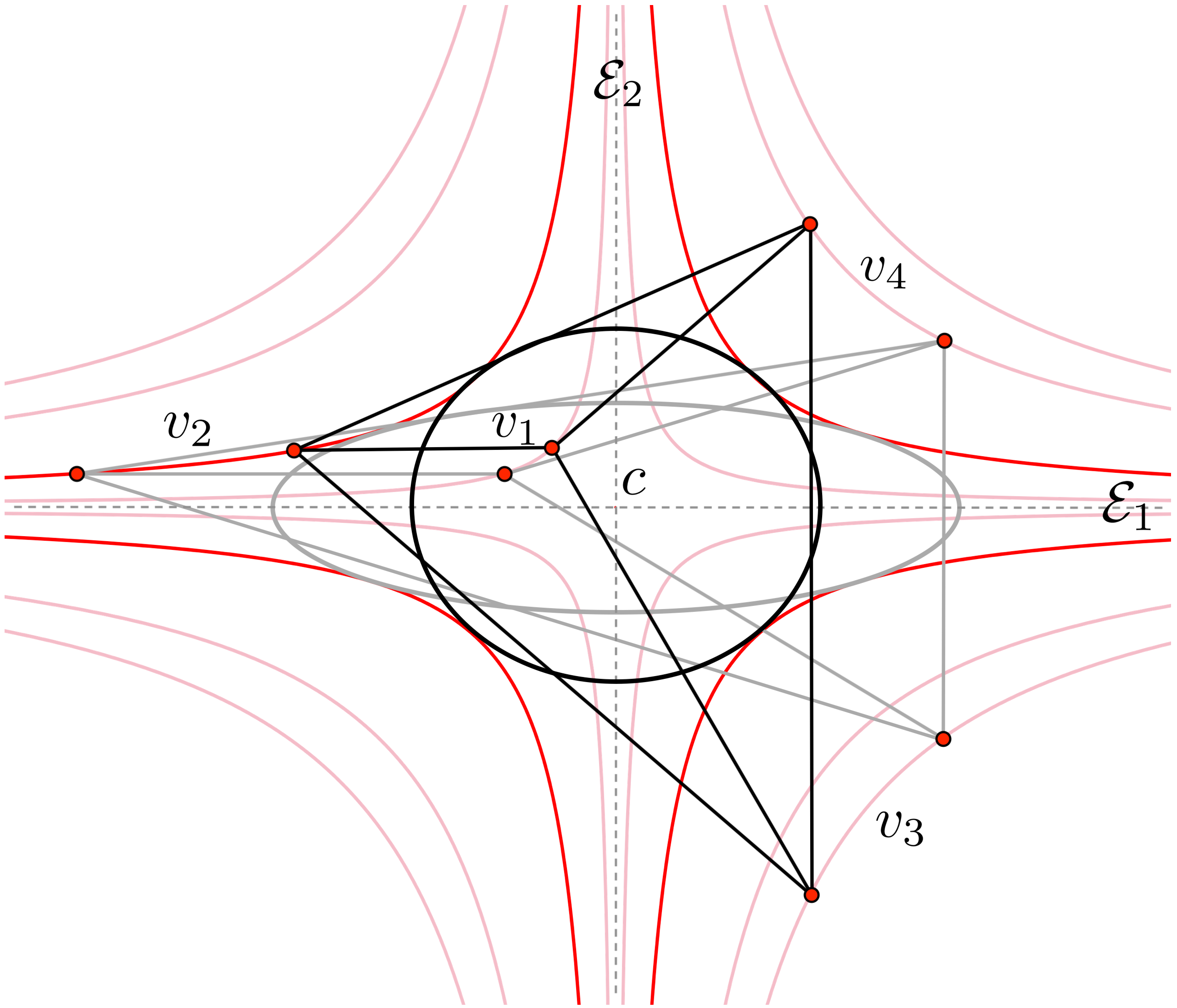}
\caption{A quadrangle on its principal axes before and after a boost along its principal axes. The vertices follow the paths of the hyperbolas. }
\label{double squeeze quadrangles}
\end{figure}

Thus, if $v=v_1+v_2$ with $v_i\in\mathcal E_i(Q)$, then $P(Q)v=v_1$.  Therefore, $P(Q)$ is the orthogonal projection onto $\mathcal E_1(Q)$, while $I-P(Q)$ is the orthogonal projection onto $\mathcal E_2(Q)$.  In particular, $P(Q)^T=P(Q)$ and $P(Q)^2=P(Q).$ Since $N$ and the aspect map are Nash on the set of {nonexceptional} quadrangles, equation~\eqref{projection formula} shows directly that $P(Q)$ depends Nashly on $Q$.  Since $P(Q)$ and $I-P(Q)$ are the orthogonal projections onto $\mathcal E_1(Q)$ and $\mathcal E_2(Q)$, respectively, the spectral decomposition of $N(Q)$ is
\begin{equation}\label{spectral decomposition}
N(Q)=\lambda_1(\delta)P(Q)+\lambda_2(\delta)(I-P(Q))
=\frac{1}{1+\delta}
\big(\delta P(Q)+I-P(Q)\big).
\end{equation}

Now fix $\alpha$ and $\beta$ as in the theorem.  Since they have the same sign, $\alpha/\beta>0$.  Let $s=({\alpha}/{\beta})^{1/4}.$ For $Q\in\Quad(\alpha)$, define a boost matrix by
\begin{equation}\label{boost matrix}
A_{\alpha,\beta}(Q)
=sP(Q)+s^{-1}\big(I-P(Q)\big).
\end{equation}
Since $P(Q)$ and $I-P(Q)$ are the projections onto $\mathcal E_1(Q)$ and $\mathcal E_2(Q)$, respectively, $A_{\alpha,\beta}(Q)$ acts by the factor $s$ on $\mathcal E_1(Q)$ and by the reciprocal factor $s^{-1}$ on $\mathcal E_2(Q)$.  Thus its principal directions agree with those of the form of $Q$. This matrix is symmetric and positive definite, with inverse
\begin{equation}\label{boost inverse}
A_{\alpha,\beta}(Q)^{-1}
=s^{-1}P(Q)+s\big(I-P(Q)\big).
\end{equation}
The positive fourth-root function is Nash on $(0,\infty)$. Since $s=(\alpha/\beta)^{1/4}$ and $P(Q)$ depends Nashly on $Q$, equations~(\ref{boost matrix}) and~(\ref{boost inverse}) show that $A_{\alpha,\beta}(Q)$ and $A_{\alpha,\beta}(Q)^{-1}$ depend Nashly on $Q$, $\alpha$, and $\beta$ on each of the two same-sign parameter regions in the statement of the theorem.

{Let $c(Q)$ be the centroid of $Q$, i.e., $c(Q)=(v_1+v_2+v_3+v_4)/4.$ } Define the affine transformation
$$f_Q(x)=c(Q)+A_{\alpha,\beta}(Q)(x-c(Q)),$$
so that  $f_Q$ is the boost about $c(Q)$ along the principal axes $c(Q)+\mathcal E_1(Q)$ and $c(Q)+\mathcal E_2(Q)$. Define $\Theta_{\alpha,\beta}(Q)=f_Q\cdot Q.$ The map $Q\mapsto f_Q$ is Nash because $Q\mapsto c(Q)$ and $Q\mapsto A_{\alpha,\beta}(Q)$ are Nash, so since the affine action on $\Quad$ is Nash, $\Theta_{\alpha,\beta}(Q)=f_Q\cdot Q$ is Nash. Also, $f_Q$ fixes $c(Q)$, so $\Theta_{\alpha,\beta}(Q)$ has the same centroid as $Q$.

Put $Q'=\Theta_{\alpha,\beta}(Q)$.  By Remark~\ref{form remark}, a matrix representing the form of $Q'$ is
$$L=A_{\alpha,\beta}(Q)^{-T}N(Q)A_{\alpha,\beta}(Q)^{-1}.$$
Since $A_{\alpha,\beta}(Q)$ is symmetric, $A_{\alpha,\beta}(Q)^{-T}=A_{\alpha,\beta}(Q)^{-1}$.  Using the spectral decomposition in equations~(\ref{spectral decomposition}) and~(\ref{boost inverse}), we obtain
$$\begin{aligned} L &=\frac{1}{1+\alpha} \big(s^{-1}P(Q)+s(I-P(Q))\big)\\ &\quad\cdot\big(\alpha P(Q)+I-P(Q)\big)\\ &\quad\cdot\big(s^{-1}P(Q)+s(I-P(Q))\big)\\ &=\frac{1}{1+\alpha} \big(\alpha s^{-2}P(Q)+s^2(I-P(Q))\big)\\ &=\frac{s^2}{1+\alpha} \big(\beta P(Q)+I-P(Q)\big), \end{aligned}$$
where the last equality uses $s^4=\alpha/\beta$.  Thus $L$ has eigenspaces $\mathcal E_1(Q)$ and $\mathcal E_2(Q)$, with corresponding eigenvalues
$$\frac{s^2\beta}{1+\alpha} \:\: \mbox{ and } \:\: \frac{s^2}{1+\alpha},$$
respectively.  Their ratio is $\beta$, and the first has smaller absolute value because $0<|\beta|<1$.  Their sum is
$$\tr(L)=\frac{s^2(1+\beta)}{1+\alpha}\ne0.$$
By equation~(\ref{asp eq}), $Q'$ has aspect $\beta$. In particular, $Q'$ is nonexceptional, and after trace normalization we have $$N(Q')=\frac{1}{1+\beta} \big(\beta P(Q)+I-P(Q)\big).$$ It follows at once from the definition of the projection that
$$P(Q') =\frac{(1+\beta)N(Q')-I}{\beta-1} =P(Q).$$
Thus the boost map $\Theta_{\alpha,\beta}$ sends $Q$ to a quadrangle of aspect $\beta$ while preserving its centroid and ordered principal axes.

We claim next that $\Theta_{\beta,\alpha}$ is the inverse of $\Theta_{\alpha,\beta}$. Since $(\beta/\alpha)^{1/4}=s^{-1}$ and $P(Q')=P(Q)$,
$$\begin{aligned} A_{\beta,\alpha}(Q') =s^{-1}P(Q)+s\big(I-P(Q)\big) =A_{\alpha,\beta}(Q)^{-1}. \end{aligned}$$
The centered affine transformations used in the two constructions have the same fixed centroid and inverse linear parts.  Hence $\Theta_{\beta,\alpha}(Q')=Q$.  {The reverse composition follows by interchanging $\alpha$ and $\beta$}, so $\Theta_{\alpha,\beta}:\Quad(\alpha)\rightarrow\Quad(\beta)$ is a Nash diffeomorphism with inverse $\Theta_{\beta,\alpha}$.

It remains to prove similarity equivariance.  Write an arbitrary similarity as ${g}(x)=rRx+p,$ where $r>0$, $R$ is orthogonal, and $p\in\R^2$.  The translation $p$ changes only lower-degree terms and hence does not affect the form.  Writing brackets for projective classes and setting $S=rR$, {we have, by Remark~\ref{form remark},}
$$[M({g}\cdot Q)] =[(rR)^{-T}M(Q)(rR)^{-1}] =[r^{-2}RM(Q)R^T].$$
Thus, choosing compatible matrix representatives, we may take
$$M({g}\cdot Q)=r^{-2}RM(Q)R^T.$$
Since $\tr(RM(Q)R^T)=\tr(M(Q))$, {after trace normalization we have}
$$N({g}\cdot Q) =\frac{r^{-2}RM(Q)R^T}{r^{-2}\tr(M(Q))}=RN(Q)R^T.$$
By Lemma~\ref{asp inv}, $\alpha({g}\cdot Q)=\alpha(Q)=\alpha$, so the same $s=(\alpha/\beta)^{1/4}$ is used for $Q$ and ${g}\cdot Q$.   {Thus, by the definition of $P$,}
$$\begin{aligned} P({g}\cdot Q) &=\frac{(1+\alpha)RN(Q)R^T-I}{\alpha-1}\\ &=R\left(\frac{(1+\alpha)N(Q)-I}{\alpha-1}\right)R^T\\ &=RP(Q)R^T. \end{aligned}$$
{With this identity, the definition of the boost matrix takes the form}
$$\begin{aligned} A_{\alpha,\beta}({g}\cdot Q) &=sRP(Q)R^T+s^{-1}\big(I-RP(Q)R^T\big)\\ &=R\big(sP(Q)+s^{-1}(I-P(Q))\big)R^T\\ &=RA_{\alpha,\beta}(Q)R^T. \end{aligned}$$
Since $c({g}\cdot Q)={g}(c(Q))$, for every $x\in\R^2$ we have
$$\begin{aligned} f_{{g}\cdot Q}({g}(x)) &={g}(c(Q))+RA_{\alpha,\beta}(Q)R^T \big(rR(x-c(Q))\big)\\ &={g}(c(Q))+rRA_{\alpha,\beta}(Q)(x-c(Q))\\ &={g}(f_Q(x)). \end{aligned}$$
Thus $f_{{g}\cdot Q}\circ {g}={g}\circ f_Q$, and consequently
$$\Theta_{\alpha,\beta}({g}\cdot Q) ={g}\cdot\Theta_{\alpha,\beta}(Q).$$
This shows that the map $\Theta_{\alpha,\beta}$ is  equivariant under all similarities, and hence under $G$.

Finally, {by equivariance, $\Theta_{\alpha,\beta}$ induces} a bijection
$$\overline{\Theta}_{\alpha,\beta}: \Quad(\alpha)/G\longrightarrow\Quad(\beta)/G$$
whose inverse is induced by $\Theta_{\beta,\alpha}$.  By Lemma~\ref{asp inv}, the two aspect fibers are $G$-invariant.  Since they are embedded Nash submanifolds of $\Quad$, the argument at the beginning of the proof of Lemma~\ref{Nash quotient} shows that the ambient Nash, free, and proper $G$-action restricts to a Nash, free, and proper action on each fiber. Thus Lemma~\ref{Nash lemma} shows that their quotient maps are surjective Nash submersions.  If $\pi_\alpha:\Quad(\alpha)\to\Quad(\alpha)/G$ and $\pi_\beta:\Quad(\beta)\to\Quad(\beta)/G$ are the quotient maps, then
$$\overline{\Theta}_{\alpha,\beta}\circ\pi_\alpha =\pi_\beta\circ\Theta_{\alpha,\beta}.$$
The right-hand side is Nash, so Lemma~\ref{Nash descent} shows that $\overline{\Theta}_{\alpha,\beta}$ is Nash.  Applying the same argument to $\Theta_{\beta,\alpha}$ shows that its induced inverse is Nash.  This proves the theorem.
\end{proof}

By Theorem~\ref{TFAE}, the quadrangles with aspect in the interval $(0,1)$ {are the} {nonexceptional} quadrangles whose {center }loci are ellipses, and those with aspect in $(-1,0)$ {are the} {nonexceptional} quadrangles whose {center }loci are hyperbolas. We call the former {\it {elliptic}} {nonexceptional} quadrangles and the latter {\it hyperbolic} {nonexceptional} quadrangles. Theorem~\ref{hard} raises the question of whether the aspect map restricted to {elliptic nonexceptional} quadrangles is a fiber bundle. This is so, and the bundle is in fact trivial.

 \begin{corollary} Let $G$ be a Nash subgroup of $\operatorname{Sim}(\R^2)$. {Let $\Quad(+)$ and $\Quad(-)$ denote the sets of elliptic and hyperbolic nonexceptional quadrangles, respectively.} There are Nash diffeomorphisms
\begin{eqnarray*}
{\Quad(+)}/G & \simeq &
 \{{\mbox{\rm aspect } } 1/2 {\mbox{\rm{ quadrangles}}}\}/G
  \times \R,\\
{\Quad(-)}/G & \simeq &
  \{{\mbox{\rm aspect } } -1/2 {\mbox{\rm{ quadrangles}}}\}/G
  \times \R.
  \end{eqnarray*}
 \end{corollary}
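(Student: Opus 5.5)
The plan is to use the boost maps $\Theta_{\alpha,\beta}$ from the proof of Theorem~\ref{hard}, now with the source aspect allowed to vary. I treat the elliptic case and fix the target aspect $\beta=1/2$; the hyperbolic case is the same with $\beta=-1/2$ and $(-1,0)$ in place of $(0,1)$. Define
$$\Psi:\Quad(+)\longrightarrow \Quad(1/2)\times(0,1),\qquad \Psi(Q)=\big(\Theta_{\alpha(Q),1/2}(Q),\,\alpha(Q)\big),$$
and define a candidate inverse by
$$\Phi:\Quad(1/2)\times(0,1)\longrightarrow\Quad(+),\qquad \Phi(Q',t)=\Theta_{1/2,t}(Q').$$
Here $\Quad(1/2)$ denotes the aspect $1/2$ quadrangles. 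At the end I compose with a Nash diffeomorphism $(0,1)\simeq\R$, for example $t\mapsto (2t-1)/(t(1-t))$, to get the factor $\R$ in the statement.

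\emph{Nashness.} The proof of Theorem~\ref{hard} builds $\Theta_{\alpha,\beta}(Q)=f_Q\cdot Q$ from $c(Q)$, $P(Q)$, and $s=(\alpha/\beta)^{1/4}$, and it records that $A_{\alpha,\beta}(Q)$ is Nash jointly in $Q$, $\alpha$, $\beta$ on each same-sign region. For $\Psi$, I substitute $\alpha=\alpha(Q)$. This is Nash on $\Quad(+)$ by Lemma~\ref{asp is nash}, and $P(Q)$ is Nash by \eqref{projection formula}, so $\Psi$ is Nash into $\Quad\times\R$. For $\Phi$, the map $(Q',t)\mapsto f\cdot Q'$ is built from $c(Q')$, $P(Q')$, and $(1/2t)^{1/4}$ (with $\alpha=1/2$, $\beta=t$), so it is Nash on the Nash manifold $\Quad(1/2)\times(0,1)$. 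By the eigenvalue computation in that proof, $\Phi(Q',t)$ has aspect $t$, so $\Phi$ lands in $\Quad(+)$.

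\emph{Inverse and equivariance.} The inverse relation follows from the identities proved there: $P(Q')=P(Q)$, the centroid is preserved, and $A_{\beta,\alpha}(Q')=A_{\alpha,\beta}(Q)^{-1}$. Applied pointwise with $\alpha=\alpha(Q)$, these give $\Phi\circ\Psi=\mathrm{id}$ and $\Psi\circ\Phi=\mathrm{id}$, since aspect is read off correctly on both sides. The equivariance computation $\Theta_{\alpha,\beta}(g\cdot Q)=g\cdot\Theta_{\alpha,\beta}(Q)$ holds for each fixed pair $(\alpha,\beta)$, and Lemma~\ref{asp inv} gives $\alpha(g\cdot Q)=\alpha(Q)$. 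Hence $\Psi$ and $\Phi$ are $G$-equivariant, with $G$ acting trivially on the interval factor. So $\Psi$ is a $G$-equivariant Nash diffeomorphism.

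\emph{Descent to the quotients.} This is the main point needing care. $G$ acts freely and properly on $\Quad(1/2)\times(0,1)$: Lemma~\ref{free proper} gives this for $\Quad(1/2)$, as argued at the start of the proof of Lemma~\ref{Nash quotient}, and the product with a trivially acted factor inherits it. So Lemma~\ref{Nash lemma} makes $(\Quad(1/2)\times(0,1))/G$ a Nash manifold. I then identify this quotient with $(\Quad(1/2)/G)\times(0,1)$. The map $\pi_{1/2}\times\mathrm{id}$ is a surjective Nash submersion whose fibers are exactly the $G$-orbits, and the uniqueness clause of Lemma~\ref{Nash lemma}, together with Lemma~\ref{Nash descent}, identifies the two Nash structures.

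$\Quad(+)$ is open and $G$-invariant in $\Quad$, so $\Quad(+)/G$ is a Nash manifold by Lemma~\ref{Nash lemma}. Finally, Lemma~\ref{Nash descent} is applied to $(\pi_{1/2}\times\mathrm{id})\circ\Psi$ and to $\pi_+\circ\Phi$. It shows that the induced maps $\overline\Psi$ and $\overline\Phi$ are Nash and mutually inverse, exactly as at the end of the proof of Theorem~\ref{hard}. Composing with the chosen diffeomorphism $(0,1)\simeq\R$ gives the first displayed Nash diffeomorphism, and the hyperbolic case gives the second.
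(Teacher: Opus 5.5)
Your proposal is correct and follows essentially the same route as the paper. Both use the map $Q\mapsto(\Theta_{\alpha(Q),1/2}(Q),\alpha(Q))$, reparametrized to $\R$, with inverse built from $\Theta_{1/2,t}$; both get Nashness from the joint Nash dependence of the boost together with Lemma~\ref{asp is nash}, take equivariance and the inverse relation from Theorem~\ref{hard}, and descend via Lemma~\ref{Nash descent}. Your extra identification of $(\Quad(1/2)\times(0,1))/G$ with $(\Quad(1/2)/G)\times(0,1)$ is harmless but unnecessary, since $\pi_{1/2}\times\mathrm{id}$ is already a surjective Nash submersion onto the product, which is all Lemma~\ref{Nash descent} requires.
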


	  \begin{proof}
{Choose} {\nobreak{${\eta}:(0,1) \rightarrow \R$}} to be any Nash diffeomorphism from $(0,1)$ onto $\R$, e.g.,
$${\eta}(t)=\frac{2t-1}{t(1-t)}.$$
The construction of $\Theta_{\alpha,\beta}$ in Theorem~\ref{hard} depends Nashly on $Q$, $\alpha$, and $\beta$ for $\alpha,\beta\in(0,1)$, and the aspect map is Nash on $\Quad(+)$ by Lemma~\ref{asp is nash}. Hence the map $$\phi:\Quad(+) \rightarrow \Quad(1/2) \times \R:Q \longmapsto (\Theta_{{\alpha}(Q),1/2}(Q), {\eta}({\alpha}(Q)))$$ is Nash. The proof of Theorem~\ref{hard} shows that $\Theta_{\beta,\alpha}=\Theta_{\alpha,\beta}^{-1}$ for every pair $\alpha,\beta$ of the same sign. Therefore, $\phi$ has inverse $$\Quad(1/2) \times \R \longrightarrow \Quad(+):(Q,s) \longmapsto \Theta_{1/2,{\eta}^{-1}(s)}(Q),$$ and so $\phi$ is a Nash diffeomorphism. Similarly, let ${\eta_-}:(-1,0)\to\R$ be the Nash diffeomorphism ${\eta_-}(t)={\eta}(-t)$. Then $$\psi:\Quad(-) \longrightarrow \Quad(-1/2)\times \R: Q\longmapsto (\Theta_{{\alpha}(Q),-1/2}(Q),{\eta_-}({\alpha}(Q)))$$ is a Nash diffeomorphism with inverse $$\Quad(-1/2)\times\R\longrightarrow \Quad(-):(Q,s)\longmapsto \Theta_{-1/2,{\eta_-}^{-1}(s)}(Q).$$ Since aspect is invariant under similarities and the proof of Theorem~\ref{hard} shows that the maps $\Theta_{\alpha,\beta}$ are equivariant, the maps $\phi$ and $\psi$ are $G$-equivariant when $G$ acts trivially on the $\R$-coordinate. Thus these diffeomorphisms descend to diffeomorphisms from the orbit spaces in the statement of the corollary onto $\Quad(1/2)/G\times\R$ and $\Quad(-1/2)/G\times\R$, respectively. Let $\overline{\phi}$ denote the map induced by $\phi$, and let $\pi_+:\Quad(+)\to\Quad(+)/G$ and $\pi_{1/2}:\Quad(1/2)\to\Quad(1/2)/G$ be the quotient maps. Then
$$\overline{\phi}\circ\pi_+ =(\pi_{1/2}\times\operatorname{id}_{\R})\circ\phi.$$
The right-hand side is Nash, and $\pi_+$ is a surjective Nash submersion, so Lemma~\ref{Nash descent} shows that $\overline{\phi}$ is Nash. The same argument applied to the inverse of $\phi$, and to $\psi$ and its inverse, shows that the two descended maps are Nash diffeomorphisms.
	 \end{proof}

In a sequel, we study these model fibers for the groups of orientation-preserving isometries and similarities using methods based on the theory of bisector fields developed in \cite{OWPencil,OWQuad,OWProj}.

\section*{Statements and Declarations}

\smallskip

{\it Funding.} The first author was supported by NSF grant DMS-2231414.

\smallskip

{\it Competing interests.} The authors have no competing interests to declare that are relevant to the content of this article.

 \smallskip

{\it Data availability.} No datasets were generated or analyzed during the current study.

\smallskip

{\it Technology and AI use.} During the preparation of this manuscript, the authors used ChatGPT   for checking and simplifying arguments. All mathematical   proofs and results however are the authors' own. Maple was used to assist with and check calculations.


\begin{thebibliography}{99}

\bibitem{ACGR} A. Aizenbud, S. Carmeli, D. Gourevitch, L. Radzivilovski, Quotients of Nash manifolds by equivalence relations, Arnold Math. J., to appear, {arXiv:2304.02287}.


\bibitem{AG} A. Aizenbud and D. Gourevitch, De-Rham theorem and Shapiro lemma for Schwartz functions on Nash manifolds, Israel J. Math. 177 (2010), 155--188.

\bibitem{ArrMuc} J.~A.~Arredondo and J.~Muci\~no-Raymundo, Plane polynomials and Hamiltonian vector fields determined by their singular points, Results Math. 79 (2024), article 105.

\bibitem{AM} M. Artin and B. Mazur, On periodic points, Ann. of Math. (2) 81 (1965), no. 1, 82--99.

\bibitem{BambergPenttila} J.~Bamberg and T.~Penttila, {\it Analytic Projective Geometry}, Cambridge University Press, Cambridge, 2023.

\bibitem{BCR} J. Bochnak, M. Coste and M.-F. Roy,  {\it Real Algebraic Geometry}, {Springer-Verlag}, Berlin / Heidelberg, 1998.

\bibitem{Casey} J.~Casey, {\it A Treatise on the Analytical Geometry of the Point, Line, Circle, and Conic Sections}, 2nd ed., revised and enlarged, Hodges, Figgis, \& Co., Dublin; Longmans, Green, \& Co., London, 1893.


\bibitem{Del} A.~Del Centina, Brianchon and Poncelet's joint memoir, the nine-point circle, and beyond, Arch. Hist. Exact Sci. 76 (2022), no. 4, 363--390.

\bibitem{FM} W.~Fulton and R.~MacPherson, A compactification of configuration spaces, Ann. of Math. (2) 139 (1994), no.~1, 183--225.

\bibitem{GN} C.~Gibson and P.~Newstead,  On the geometry of the planar 4-bar mechanism, Acta Appl. Math. 7 (1986), no. 2, 113--135.

\bibitem{GonLop1} A. Gonz\'alez and J. L.  L\'opez-L\'opez, Compactness of spaces of convex and simple quadrilaterals, Bull. Aust. Math. Soc. 94 (2016), no. 3, 507--521.

\bibitem{GonLop2} A. Gonz\'alez and J. L. L\'opez-L\'opez, Spaces of special quadrilaterals, Bull. Aust. Math. Soc. 100 (2019), no. 1, 155--167.

\bibitem{GonMen} A. Gonz\'alez and M. Sedano-Mendoza, Moduli space of simple polygons, {arXiv:2303.15406 (2023)}.

\bibitem{KapMil1} M. Kapovich and J. Millson, On the moduli space of polygons in the Euclidean plane, J. Differential Geom. 42 (1995), no. 1, 133--164.

\bibitem{KapMil2} M. Kapovich and J. Millson, The symplectic geometry of polygons in Euclidean space, J. Differential Geom. 44 (1996), no. 3, 479--513.


\bibitem{Lee} {J.~M.~Lee, {\it Introduction to Smooth Manifolds}, 2nd ed., Graduate Texts in Mathematics 218, Springer, New York, 2013.}

{\bibitem{Martin} J.~L.~Martin, Geometry of graph varieties, Trans. Amer. Math. Soc. 355 (2003), no.~10, 4151--4169. \par}

\bibitem{Mint} M.~Minthorn, The nine-point conic, master's dissertation, University of California, Berkeley, 1912.

\bibitem{NavSan} J.~A.~Navarro Gonz\'alez and J.~B.~Sancho de Salas, {\it $C^\infty$-Differentiable Spaces}, Lecture Notes in Mathematics 1824, Springer-Verlag, Berlin, 2003.

\bibitem{OWPencil} B.~Olberding and E.~A.~Walker, Bisector fields and pencils of conics, Aequ. Math. 99 (2025), no. 2, 351--376.

\bibitem{OWQuad} B.~Olberding and E.~A.~Walker, Bisector fields of quadrilaterals, Beitr. Algebra Geom. 66 (2025), no. 1, 135--157.

\bibitem{OWProj} B. Olberding and E. A. Walker, Geometry of lines asymptotic to a pencil of conics, Aequ. Math. 99 (2025), no. 5, 2423--2456.

\bibitem{OWEnt}  B.~Olberding and E.~A.~Walker, Entanglement of four-point configurations, submitted.

\bibitem{Palais}  R.~S.~Palais, On the existence of slices for actions of non-compact Lie groups, Ann. of Math. (2) 73 (1961), no.~2, 295--323.




\bibitem{ShiVan} D. Shimamoto and C. Vanderwaart, Spaces of polygons in the plane and Morse theory, Amer. Math. Monthly 112 (2005), 289--310.


\bibitem{StClair} J. {L.} StClair, Geometry of spaces of planar quadrilaterals, PhD dissertation, Virginia Polytechnic Institute { and State University}, 2011.

\bibitem{Sun} B.~Sun, Almost linear Nash groups, Chinese Ann. Math. Ser. B 36 (2015), no.~3, 355--400.

\bibitem{Vac} M. Vaccaro, Historical origins of the nine-point conic---the contribution of Eugenio Beltrami, Historia Math. 51 (2020), 26--48.

\end{thebibliography}
\end{document}